\newtheorem{thm}{Theorem}[section]
\newtheorem{lem}[thm]{Lemma}
\newtheorem{prop}[thm]{Proposition}
\theoremstyle{definition}
\newtheorem{rem}{Remark}[section]
\numberwithin{equation}{section}
\begin{document}

\title[ repulsion-consumption Keller-Segel system]
{Boundedness and finite-time blow-up in a repulsion-consumption system with flux limitation}

\author[Zeng]{Ziyue Zeng}%
\address{School of Mathematics, Southeast University, Nanjing 211189, P. R. China}
\email{ziyzzy@163.com}

\author[Li]{Yuxiang Li$^{\star}$}
\thanks{$^{\star}$Corresponding author}

\address{School of Mathematics, Southeast University, Nanjing 211189, P. R. China}
\email{lieyx@seu.edu.cn}

\thanks{Supported in part by National Natural Science Foundation of China (No. 12271092, No. 11671079) and the Jiangsu Provincial Scientific Research Center of Applied Mathematics (No. BK20233002).}

\subjclass[2020]{35K55, 35B44, 92C17.}%

\keywords{Repulsion-consumption system, classical solution, global boundedness, finite time blow up, flux limitation.}

\maketitle
\begin{abstract}
We investigate the following repulsion-consumption system with flux limitation
\begin{align}\tag{$\star$}
  \left\{
  \begin{array}{ll}
   u_t=\Delta u+\nabla \cdot(uf(|\nabla v|^2) \nabla v), & x \in \Omega, t>0, \\
   \tau v_t=\Delta v-u v, & x \in \Omega, t>0,
\end{array}
  \right.
\end{align}
under no-flux/Dirichlet boundary conditions, where $\Omega \subset \mathbb{R}^n$ is a bounded domain and $f(\xi)$ generalizes the prototype given by $f(\xi)=(1+\xi)^{-\alpha}$ ($\xi \geqslant 0$). We are mainly concerned with the global existence and finite time blow-up of system ($\star$). The main results assert that, for $\alpha > \frac{n-2}{2n}$, then when $\tau=1$ and under radial settings, or when $\tau=0$ without radial assumptions, for arbitrary initial data, the problem ($\star$) possesses global bounded classical solutions; for $\alpha<0$, $\tau=0$, $n=2$ and under radial settings, for any initial data, whenever the boundary signal level large enough, the solutions of the corresponding problem blow up in finite time.

Our results can be compared respectively with the blow-up phenomenon obtained by Ahn \& Winkler (2023) for the system with nonlinear diffusion and linear chemotactic sensitivity, and by Wang \& Winkler (2023) for the system with nonlinear diffusion and singular sensitivity .
\end{abstract}

\section{Introduction}
\vskip 3mm
In this paper, we consider the following repulsion-consumption system with flux limitation
\begin{align}\label{1.0.0}
  \left\{
  \begin{array}{ll}
    u_t=\Delta u+\nabla \cdot(uf(|\nabla v|^2) \nabla v), & x \in \Omega, t>0, \\
    \tau v_t=\Delta v-u v, & x \in \Omega, t>0,\\
    \left( \nabla u+uf(|\nabla v|^2)\nabla v \right) \cdot \nu=0, \quad v=M, & x \in \partial \Omega, t>0, \\
    u(x, 0)=u_0(x), \quad \tau v(x, 0)=\tau v_0(x), & x \in \Omega,
\end{array}
  \right.
\end{align}
posed on a bounded domain $\Omega \subset \mathbb{R}^n$, where $\tau \in \{0,1\}$, $M$ is a given parameter and the function $f$ appropriately generalizes the prototype determined by
\begin{align*}
f(\xi)=(1+\xi)^{-\alpha}, \quad \xi \geqslant 0
\end{align*}
with $\alpha \in \mathbb{R}$. The scalar functions $u$ and $v$ represent the cell density and the chemical concentration consumed by cells, respectively. $f(|\nabla v|^2)$ represents the chemotactic sensitivity of cells which is sensitive to gradient of $v$. Detailed biological backgrounds about the Keller-Segel model with flux limitation can be found in \cite{2010-MMMAS-BellomoBellouquidNietoSoler, 2022-MPRIA-Zhigun, 2020-RMI-PerthameVaucheletWang}. In the present work, we aim to find the critical blow-up exponent of flux limitation in system (\ref{1.0.0}).

When $\alpha=0$ and the chemotactic sensitivity function depends on $u$ and $v$, rather than the gradient of $v$, the system is given by
\begin{align}\label{1.0.2}
  \left\{
  \begin{array}{ll}
    u_t=\nabla \cdot(D(u) \nabla u)-\nabla \cdot(uS(u,v) \nabla v),   \\
    \tau v_t=\Delta v-u v, 
 \end{array}
  \right.
\end{align}
where $D(u)$ denotes the diffusivity of the cells and $S(u,v)$ stands the chemotactic sensitivity. The system (\ref{1.0.2}) with $D(u)= 1$ and $S(u,v)= \chi $ is a well-known chemotaxis model which describes the intricate patterns formed by the colonies of Bacillus subtilis as they seek oxygen \cite{1971-JTB-KellerSegel, 1990-PA-MatsushitaFujikawa, 2005-PNASU-TuvalCisnerosDombrowskiWolgemuthKesslerGoldstein}. In the past decades, system (\ref{1.0.2}) with $\tau=1$, subjected to homogeneous Neumann boundary conditions, has been studied extensively on the existence of global bounded solutions. When $\left\|v_0\right\|_{L^{\infty}(\Omega)}$ is sufficiently small and $n\geqslant 2$, Tao \cite{2011-JMAA-Tao} proved that system (\ref{1.0.2}) possesses global bounded classical solutions. For arbitrary large initial data, Tao and Winkler \cite{2012-JDE-TaoWinklera} demonstrated that if $n=2$, the global classical solutions of (\ref{1.0.2}) are bounded; if $n=3$, there is at least one global weak solution, which eventually becomes bounded and smooth. In \cite{2019-EJDE-WangLi}, Wang and Li showed that this model possesses at least one global renormalized solution when $n \geqslant 4$. We refer to \cite{2015-ZAMP-WangMuLinZhao, 2014-ZAMP-WangMuZhou, 2015-ZAMP-WangXiang} and survey \cite{2023-SAM-LankeitWinkler} for more related results.

The system (\ref{1.0.2}), subjected to no-flux/Dirichlet boundary conditions, has been investigated by some authors. 
For the parabolic-parabolic system (\ref{1.0.2}) with $D(u)=S(u,v)=1$, Lankeit and Winkler \cite{2022-N-LankeitWinkler} found that the radially symmetric problem possesses global bounded classical solutions when $n=2$ and global weak solutions when $n\in \left\{3,4,5\right\}$. For the parabolic-elliptic system (\ref{1.0.2}) with $D(u)=1$ and $S(u,v)=S(v)$, where $S(v)$ may allow singularities at $v= 0$, Yang and Ahn \cite{2024-NARWA-YangAhn} established the global existence and boundedness of radial large data solutions when $n \geqslant 2$. The system (\ref{1.0.2}) with $S(u,v)<0$ is called the repulsion-consumption system, and there are some results on the occurrence of blow-up in finite time for the parabolic-elliptic radially symmetric system. For system (\ref{1.0.2}) with $D(u)=(1+u)^{-\alpha}$ and $S(u,v)=-1$, Ahn and Winkler \cite{2023-CVPDE-AhnWinkler} demonstrated that when $\alpha>0$ and the boundary signal level is sufficiently large, the corresponding 2D problem admits a finite-time blow-up classical solution; when $\alpha \leqslant 0$, a global bounded classical solution exists. For system (\ref{1.0.2}) with $D(u)=1$, $S(u,v)=-u^{\beta}$, under radial settings, Zeng and Li \cite{2024--ZengLi} proved that, when $0<\beta < \frac{n+2}{2n}$ and $\tau \in \{0,1\}$, the system possesses global bounded classical solutions; when $\beta>1$, $\tau=0$ and  $n=2$, under the condition that the boundary signal level large enough, there exists a finite-time blow-up solution. For $D(u)=(1+u)^{-\alpha}$ and the chemotactic term in (\ref{1.0.2}) is replaced by $+\nabla \cdot(u \nabla \log v)$, Wang and Winkler \cite{2023-PRSESA-WangWinkler} showed that when $\alpha>0$, for all initial data from a considerably large set of radial functions, the system (\ref{1.0.2}) possess a finite-time blow-up solution.

Bellomo and Winkler \cite{2017-CPDE-BellomoWinkler} considered the following degenerate chemotaxis system with flux limitation
\begin{align}\label{1.0.3}
  \left\{
  \begin{array}{ll}
    u_t = \nabla\cdot\Big( \frac{u\nabla u}{\sqrt{u^2+|\nabla u|^2}}\Big)
          - \chi\nabla\cdot\Big(\frac{u\nabla v}{\sqrt{1+|\nabla v|^2}}\Big), \\
    0 = \Delta v-\mu+u,  
  \end{array}
  \right.
\end{align}
under no-flux/Dirichlet boundary conditions, and they proved that when either $\chi<1$ $(n \geqslant 2)$, or $\int_{\Omega} u_0<\frac{1}{\sqrt{\left(\chi^2-1\right)_{+}}}$ $(n=1)$, solutions are global and bounded. Later, they \cite{2017-TAMS-BellomoWinkler} showed that when $\chi>1$ $(n \geqslant 2)$ or $\int_{\Omega} u_0 \mathrm{~d} x>\frac{1}{\sqrt{\chi^2-1}}$ $(n=1)$, there exist positive initial data $u_0$ such that the corresponding solutions blow up in finite time. Some systems related to (\ref{1.0.3}) were investigated recent years, see \cite{2021-CPAA-YiMuQiuXu, 2019-JDE-MizukamiOnoYokota, 2020-AAM-ChiyodaMizukamiYokota, 2022-MMMAS-TuMuZheng, 2024-MMMAS-LiYan}.

In 2018, Negreanu and Tello \cite{2018-JDE-NegreanuTello} considered the system 
\begin{align}\label{1.0.4}
  \left\{
  \begin{array}{ll}
    u_t =\Delta u-\nabla\cdot(\chi u|\nabla v|^{p-2}\nabla v) + g(u), \\
    0=\Delta v-\mu+u,   
  \end{array}
  \right.
\end{align}
subjected to homogeneous Neumann boundary conditions, and obtained global bounded solutions when $p \in\big(1, \frac{n}{(n-1)_{+}}\big)$ ($n \geqslant 1$) and $g(u)=0$. Later, Tello \cite{2022-CPDE-Tello} demonstrated that there exist initial data satisfying $\frac{1}{|\Omega|} \int_{\Omega} u_0 d x>6$ such that the radially symmetric solutions of system (\ref{1.0.4}) with $g(u)=0$ blow up in finite time when $p \in (\frac{n}{n-1},2)$ ($n>2$) and $\chi$ large enough. Kohatsu \cite{2024-AAM-Kohatsu} proved that system (\ref{1.0.4}) with $g(u)=\lambda u-\mu u^\kappa$, when $p > 1$ $($n = 1$)$ or $p\in(1,\frac{n}{n-1})$ $(n \geqslant 2)$, for all initial data, the problem admits a global bounded weak solution; when $p>\frac{n}{n-1}$ $(n \geqslant 2)$ and $k>1$ is small enough, the radially symmetric problem has a finite-time blow-up weak solution.

The system (\ref{1.0.4}) with the chemotactic term replaced by $-\nabla \cdot (u(1+|\nabla v|^2)^{-\alpha} \nabla v)$ as follows
\begin{align}\label{1.0.5}
  \left\{
  \begin{array}{ll}
    u_t =\Delta u-\nabla \cdot (u(1+|\nabla v|^2)^{-\alpha} \nabla v),  \\
     0=\Delta v-\mu+u,  
  \end{array}
  \right.
\end{align}
has attracted interest by some mathematicians. Winkler \cite{2022-IUMJ-Winkler} proved that when $0<\alpha<\frac{n-2}{2(n-1)}$ ($n \geqslant 3$), throughout a considerably large set of radially symmetric initial data, the problem (\ref{1.0.5}) admits radially symmetric solutions blowing up in finite time with respect to the $L^{\infty}$ norm of $u$; when $\alpha>\frac{n-2}{2(n-1)}$ ($n \geqslant 2$) or $\alpha \in \mathbb{R}$ ($n=1$), the problem exists global bounded classical solutions without radial assumptions. Subsequently, Marras, Vernier-Piro and Yokota \cite{2022-JMAA-MarrasVernierPiroYokota} demonstrated that a solution for system (\ref{1.0.5}) which blows up in finite time in $L^{\infty}$-norm, blows up also in $L^{p}$-norm for $\frac{n}{2}<p<n$ ($n \geqslant 3$) under the same conditions as those in \cite{2022-IUMJ-Winkler} regarding $f$ and initial data. Moreover, they derived a lower bound of blow-up time. For system with source term and $\alpha \in(0,\frac{n-2}{2(n-1)})$ ($n \geqslant 3$), Marras, Vernier-Piro and Yokota \cite{2023-NNDEA-MarrasVernier-PiroYokota} illustrated that weak logistic dampening cannot prevent the occurrence of finite-time blow-up phenomenon in an appropriate and explicit sense. Furthermore, Mao and Li \cite{2024-NA-MaoLi} considered the instability of large homogeneous steady state when $\alpha \in (0,\frac{n-2}{2(n-1)}]$. 

For the parabolic-parabolic system,
\begin{align}\label{1.0.6}
\left\{
  \begin{array}{ll}
u_t=\Delta u-\nabla \cdot(u f(|\nabla v|) \nabla v),  \\ 
v_t=\Delta v-v+u, 
 \end{array}
  \right.
\end{align}
Yan and Li \cite{2020-EJDE-YanLi} proved that the system possesses global weak solutions which are uniformly bounded when $f(\xi)=\xi^{\alpha-2}$ with $\alpha \in (1,\frac{n}{(n-1)_+})$. Kohatsu and Yokota \cite{2023-MC-KohatsuYokota} revealed the
stability of constant equilibria under some smallness conditions for the initial data. Winkler \cite{2022-MN-Winkler} demonstrated that when $f(\xi)=(1+\xi^2)^{-\alpha}$ with $\alpha>\frac{n-2}{2(n-1)}$ ($n \geqslant 2$) or $\alpha \in \mathbb{R}$ ($n=1$), system (\ref{1.0.6}) admits a unique global bounded classical solution. When $f(\xi)=\xi^{\alpha-2}$ and the second equation is replaced by $v_t=\Delta v-u v$, Wang and Li \cite{2019-JMP-WangLi} showed that if $\left\|v_0\right\|_{L^{\infty}(\Omega)}$ is small enough, the model admits at least one global weak solution for $n<\frac{8-2(\alpha-1)}{\alpha-1}$ and possesses at least one global renormalized solution for $n \geqslant \frac{8-2(\alpha-1)}{\alpha-1}$.



In view of \cite{2023-CVPDE-AhnWinkler, 2023-PRSESA-WangWinkler, 2024--ZengLi}, the blow-up phenomenon of the repulsion-consumption system may occur when diffusion is inhibited or the chemotactic sensitivity is sufficiently strong. Motivated by these observations, we investigate the effect of the flux limitation on the occurrence of blow-up phenomenon for the repulsion-consumption parabolic-elliptic system and establish the boundeness of solutions for the repulsion-consumption system to find the critical exponent. 

\textbf{Main results}.
Suppose that $f$ and the initial data satisfy
\begin{align}\label{f-lip}
  f \in C^2([0,\infty))
\end{align}
and 
\begin{align}\label{u_0}
  u_0 \in W^{1,\infty}(\bar{\Omega}) \text { is nonnegative with } u_0 \not \equiv 0.
\end{align}
When $\tau=1$, we assume that 
\begin{align}\label{v 0}
v_0 \in W^{1, \infty}(\Omega) \text { is positive in } \bar{\Omega} \text { and radially symmetric with } v_0=M \text { on } \partial \Omega .
\end{align}

Before we state our main results, we give the local existence of classical solutions to (\ref{1.0.0}), which can be proved by a direct adaptation of well-known fixed-point
arguments \cite{2019-JDE-TaoWinkler}.
 
\begin{prop}\label{0.0-1}
Let $\Omega \subset \mathbb{R}^n$ be a bounded domain with smooth boundary. Suppose that  $(\ref{f-lip})$, $(\ref{u_0})$ and $(\ref{v 0})$ are valid. Then there exist $T_{\max } \in(0, \infty]$ and a uniquely pair $(u, v)$ which solves $(\ref{1.0.0})$ with $\tau=1$ classically in $\Omega \times\left(0, T_{\max }\right)$, satisfying 
\begin{align*}
\left\{\begin{array}{l}
 u \in C(\bar{\Omega} \times[0, T_{\max})) \cap C^{2,1}(\bar{\Omega} \times(0, T_{\max})) \quad \text { and } \\
 v \in \bigcap_{q>n} C\left([0, T_{\max}) ; W^{1, q}(\Omega)\right) \cap C^{2,1}(\bar{\Omega} \times(0, T_{\max}))
\end{array}\right.
\end{align*}
and $u, v > 0$ in $\Omega \times\left(0, T_ {\max }\right)$. Besides, we have
\begin{align}\label{mass-1}
\int_{\Omega} u(\cdot, t)=\int_{\Omega} u_0 \quad \text { for all } t \in\left(0, T_{\max }\right)
\end{align}
and
\begin{align*}
\text { if } T_{\max }<\infty, \quad \text { then } \quad \limsup _{t \nearrow T_{\max }}\|u(\cdot, t)\|_{L^{\infty}(\Omega)}=\infty 
\end{align*}
as well as
\begin{align}\label{vmax}
  \|v(\cdot, t)\|_{L^{\infty}(\Omega)} \leqslant \|v_0\|_{L^{\infty}(\Omega)},
   \quad t \in (0,T_{\max}).
\end{align}
Moreover, if $\Omega=B_R(0)$ with some $R>0$, $u_0$ and $v_0$ are radially symmetric with respect to the origin, then $u(\cdot, t)$ and $v(\cdot, t)$ are radially symmetric for each $t \in\left(0, T_{\max }\right)$.
\end{prop}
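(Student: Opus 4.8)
The plan is to establish existence and uniqueness through a Banach fixed-point argument of the type developed in \cite{2019-JDE-TaoWinkler}, and then to recover the qualitative properties (mass conservation, the $L^\infty$ bound on $v$, positivity and radial symmetry) from the structure of the two equations. Fix $q>n$ and, for a suitably small $T\in(0,1)$ and a large $R>0$, I would work on the closed set
\[
S=\Big\{\,v\in C\big([0,T];W^{1,q}(\Omega)\big)\ :\ v=M \text{ on } \partial\Omega,\ \sup_{t\in[0,T]}\|v(\cdot,t)-v_0\|_{W^{1,q}(\Omega)}\le R\,\Big\},
\]
endowed with the metric of $C([0,T];W^{1,q}(\Omega))$. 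Because $q>n$, the Sobolev embedding $W^{1,q}(\Omega)\hookrightarrow C^0(\bar\Omega)$ guarantees that every $\tilde v\in S$ has $\nabla\tilde v\in L^\infty$, so that the flux-limitation coefficient $b:=f(|\nabla\tilde v|^2)\nabla\tilde v$ is a bounded vector field; this is precisely the regularity that makes the nonlinear term tractable under the hypothesis $f\in C^2([0,\infty))$.

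Given $\tilde v\in S$, I would first solve the linear no-flux problem
\[
u_t=\Delta u+\nabla\cdot(u\,b),\qquad (\nabla u+u\,b)\cdot\nu=0 \text{ on } \partial\Omega,\qquad u(\cdot,0)=u_0,
\]
for which standard $L^p$ parabolic theory (or the variation-of-constants formula for the Neumann heat semigroup together with the associated smoothing estimates) yields a unique solution $u$ with the regularity claimed in the statement, together with bounds depending only on $\|u_0\|_{W^{1,\infty}(\bar\Omega)}$ and on $R$. With this $u$ in hand I would then solve the linear Dirichlet problem
\[
v_t=\Delta v-u\,v,\qquad v=M \text{ on } \partial\Omega,\qquad v(\cdot,0)=v_0,
\]
after subtracting the constant $M$ to reduce to homogeneous boundary data; since $u\ge0$, the resulting zeroth-order coefficient has a favourable sign, and parabolic regularity places $v$ in $\bigcap_{q>n}C([0,T];W^{1,q}(\Omega))\cap C^{2,1}$. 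This defines a map $\Phi\colon\tilde v\mapsto v$.

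The core of the argument is to verify that $\Phi$ is a self-map of $S$ and a contraction once $T$ is small. For the self-mapping property I would exploit continuity at $t=0$, noting that all smoothing constants carry a positive power of $T$, so that $\sup_{t\le T}\|v-v_0\|_{W^{1,q}}\le R$ holds after shrinking $T$. For the contraction estimate I would take $\tilde v_1,\tilde v_2\in S$, use $f\in C^2$ to obtain a local Lipschitz bound $\|b_1-b_2\|_{L^\infty}\le C\|\tilde v_1-\tilde v_2\|_{W^{1,q}}$, propagate this through the two linear solves to control $\|u_1-u_2\|$ and then $\|v_1-v_2\|$, again picking up a factor $T^{\theta}$ with $\theta>0$ that renders $\Phi$ a contraction. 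The Banach fixed-point theorem then produces a unique local solution on a maximal interval $(0,T_{\max})$, and a parabolic Schauder bootstrap upgrades it to a classical $C^{2,1}$ solution. I expect the main obstacle to lie exactly here: because the coefficient $f(|\nabla v|^2)\nabla v$ depends on the gradient of $v$, one must close the contraction in a norm strong enough to control $\nabla v$ pointwise (hence the $W^{1,q}$, $q>n$, setting) yet weak enough that the two linear parabolic solves remain Lipschitz in it, and tracking the inhomogeneous Dirichlet datum through the shift adds a further layer of bookkeeping.

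Finally, the remaining assertions follow from the solution's structure. Mass conservation \eqref{mass-1} is obtained by integrating the first equation over $\Omega$ and discarding the boundary integral via the no-flux condition. The bound \eqref{vmax} follows from the comparison principle: since $-uv$ is a nonpositive reaction whenever $v\ge0$, and the boundary value $M=v_0|_{\partial\Omega}$ does not exceed $\|v_0\|_{L^\infty(\Omega)}$, the constant $\|v_0\|_{L^\infty(\Omega)}$ is a supersolution while $0$ is a subsolution, giving $0<v\le\|v_0\|_{L^\infty(\Omega)}$, with strict positivity supplied by the strong maximum principle. The extensibility criterion is the usual consequence of the local theory applied at times approaching $T_{\max}$. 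Radial symmetry is preserved because, when $\Omega=B_R(0)$ and the data are radial, every rotation of a solution is again a solution with the same data, so uniqueness forces $u(\cdot,t)$ and $v(\cdot,t)$ to be radial for each $t\in(0,T_{\max})$.
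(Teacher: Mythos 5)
Your overall architecture --- decouple the system, perform two linear solves, close a Banach fixed point on a short time interval, bootstrap to classical regularity, and then read off mass conservation, the comparison bound \eqref{vmax}, positivity, the extensibility criterion and radial symmetry from uniqueness and the maximum principle --- is exactly the kind of argument the paper invokes by citing the fixed-point scheme of Tao and Winkler (the paper gives no proof of its own). However, there is a concrete gap at the pivotal step. You write that the embedding $W^{1,q}(\Omega)\hookrightarrow C^0(\bar\Omega)$ for $q>n$ ``guarantees that every $\tilde v\in S$ has $\nabla\tilde v\in L^\infty$.'' This is false: that embedding bounds $\tilde v$ itself (indeed $W^{1,q}\hookrightarrow C^{0,1-n/q}$), while $\nabla\tilde v$ is controlled only in $L^q$. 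Pointwise control of $\nabla\tilde v$ is precisely what the flux-limited nonlinearity requires, since $f$ is merely $C^2([0,\infty))$ with no global bound (in the blow-up regime of the paper $f$ grows), so $f(|\nabla\tilde v|^2)\nabla\tilde v$ need not even lie in $L^q$ when $\nabla\tilde v$ is only $L^q$. The same defect invalidates your contraction estimate $\|b_1-b_2\|_{L^\infty}\le C\|\tilde v_1-\tilde v_2\|_{W^{1,q}}$: the left-hand side involves the sup norm of a (only locally Lipschitz) function of the gradients, which cannot be dominated by the $W^{1,q}$ norm of the difference.

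The repair is to carry out the fixed point in a space that controls $\nabla v$ in $L^\infty$. This is available here because $v_0\in W^{1,\infty}(\Omega)$ with $v_0=M$ on $\partial\Omega$: after subtracting $M$, the Dirichlet heat semigroup estimate \eqref{Delta-u_0} (with $p_1=\infty$) shows $\|\nabla e^{t\Delta}(v_0-M)\|_{L^\infty}$ is bounded without any singularity as $t\searrow0$, so one may take $S\subset C([0,T];C^1(\bar\Omega))$ (or, equivalently, run the fixed point on $u$ in $C([0,T];C^0(\bar\Omega))$ and recover $\sup_{t\le T}\|\nabla v(\cdot,t)\|_{L^\infty}\le R$ from the variation-of-constants formula before feeding $f(|\nabla v|^2)\nabla v$ back into the $u$-equation). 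On such a set the gradients stay in a fixed compact subset of $\mathbb{R}^n$ uniformly, $f\in C^2$ supplies a genuine uniform Lipschitz bound for $\xi\mapsto f(|\xi|^2)\xi$, and the rest of your argument --- including the self-map and contraction estimates via the smoothing factors $T^{\theta}$, and all of the qualitative conclusions --- goes through as you describe. You should also record that nonnegativity and strict positivity of $u$ follow from the maximum principle applied to the first equation (no zeroth-order term, $u_0\ge0$, $u_0\not\equiv0$), a point your write-up uses implicitly when asserting the favourable sign of the coefficient $-u$ in the $v$-equation.
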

\begin{prop}\label{0.0-0}
Let $\Omega \subset \mathbb{R}^n$ be a bounded domain with smooth boundary. Suppose that $(\ref{f-lip})$ and $(\ref{u_0})$ are valid. Then there exist $T_{\max } \in(0, \infty]$ and a unique pair $(u, v)$ which solves $(\ref{1.0.0})$ with $\tau=0$ classically in $\Omega \times\left(0, T_{\max }\right)$, satisfying 
\begin{align*}
\left\{\begin{array}{l}
u \in C\left(\bar{\Omega} \times\left[0, T_{\max }\right)\right) \cap C^{2,1}\left(\bar{\Omega} \times\left(0, T_{\max }\right)\right), \\
v \in \bigcap_{q>n} L_{\mathrm{loc}}^{\infty}\left(\left[0, T_{\max }\right) ; W^{1, q}(\Omega)\right) \cap C^{2,0}\left(\bar{\Omega} \times\left(0, T_{\max }\right)\right)
\end{array}\right.
\end{align*}
and $u, v > 0$ in $\Omega \times\left(0, T_{\text {max }}\right)$. Besides, we have
\begin{align}\label{mass-0}
\int_{\Omega} u(\cdot, t)=\int_{\Omega} u_0 \quad \text { for all } t \in\left(0, T_{\max }\right)
\end{align}
and 
\begin{align*}
\text { if } T_{\max }<\infty, \quad \text { then } \quad \limsup _{t \nearrow T_{\max }}\|u(\cdot, t)\|_{L^{\infty}(\Omega)}=\infty 
\end{align*}
as well as
\begin{align*}
v \leqslant M,
\quad  (x,t) \in \Omega \times \left(0, T_{\max }\right).
\end{align*}
Moreover, if $\Omega=B_R(0)$ with some $R>0$ and $u_0$ is radially symmetric with respect to the origin, then $u(\cdot, t)$ and $v(\cdot, t)$ are radially symmetric for each $t \in\left(0, T_{\max }\right)$.
\end{prop}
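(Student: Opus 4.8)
The plan is to decouple the elliptic equation from the parabolic one and then run a Banach fixed-point argument, exactly in the spirit of the cited construction in \cite{2019-JDE-TaoWinkler}. First I would set up the \emph{elliptic solution operator}: for a fixed nonnegative $w \in C(\bar{\Omega})$, the boundary value problem $-\Delta v + w v = 0$ in $\Omega$, $v = M$ on $\partial\Omega$, has a unique solution $v =: \mathcal{S}[w]$, since the operator $-\Delta + w$ has nonnegative zeroth-order coefficient, so Lax--Milgram (after lifting the boundary data) yields existence and uniqueness, and the maximum principle gives $0 < v \leq M$ in $\Omega$ once $M>0$. Standard $L^q$ elliptic regularity upgrades this to $v \in W^{2,q}(\Omega)$ for every $q$, whence $\mathcal{S}[w] \in C^1(\bar{\Omega})$ and $\|\nabla v\|_{L^\infty(\Omega)}$ is controlled in terms of $\|w\|_{L^\infty(\Omega)}$ and $M$. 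The key quantitative input is Lipschitz dependence: writing $v_i = \mathcal{S}[w_i]$, the difference solves $-\Delta(v_1-v_2) + w_1(v_1-v_2) = -(w_1-w_2)v_2$ with zero boundary data, so for $q>n$ the estimate $\|\nabla v_1 - \nabla v_2\|_{L^\infty(\Omega)} \leq C\|v_1-v_2\|_{W^{2,q}(\Omega)} \leq CM\|w_1-w_2\|_{L^q(\Omega)}$ holds.

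With $\mathcal{S}$ in hand, I would close the loop by the Neumann heat semigroup. Fix $R > \|u_0\|_{L^\infty(\Omega)}$ and work in the set $\mathcal{M}_T$ of nonnegative $u \in X_T := C([0,T]; C(\bar{\Omega}))$ with $\|u\|_{X_T}\le R$ and $u(\cdot,0)=u_0$; this set will be left invariant by the map below because the parabolic maximum principle keeps the image nonnegative, which in turn legitimizes the nonnegativity used in the elliptic step. Given $u \in \mathcal{M}_T$, set $v(\cdot,t) = \mathcal{S}[u(\cdot,t)]$ and define $\Phi(u)$ through
\begin{align*}
\Phi(u)(t) = e^{t\Delta} u_0 + \int_0^t e^{(t-s)\Delta}\,\nabla\cdot\big(u(s)\,f(|\nabla v(s)|^2)\,\nabla v(s)\big)\,ds,
\end{align*}
where $e^{t\Delta}$ denotes the Neumann heat semigroup. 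Because $v$ enjoys the uniform gradient bound above and $f \in C^2([0,\infty))$ by (\ref{f-lip}) is in particular bounded with bounded derivative on the relevant compact range of $|\nabla v|^2$, the drift $u f(|\nabla v|^2)\nabla v$ is bounded in $L^\infty(\Omega)$, and the smoothing estimate $\|e^{(t-s)\Delta}\nabla\cdot\phi\|_{L^\infty(\Omega)} \leq C(t-s)^{-\frac12-\frac{n}{2q}}\|\phi\|_{L^q(\Omega)}$ is integrable in $s$ precisely when $q>n$. This yields a self-map for $T$ small; combining the same semigroup estimate with the Lipschitz dependence of $\mathcal{S}$ and the local Lipschitz continuity of $f$ makes $\Phi$ a contraction after possibly shrinking $T$. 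Banach's theorem then produces a unique mild solution, and a parabolic Schauder/bootstrap argument lifts it to the claimed classical regularity $u \in C^{2,1}$ and $v \in C^{2,0}$, with $v \in \bigcap_{q>n} L^\infty_{\mathrm{loc}}([0,T_{\max}); W^{1,q}(\Omega))$ read off from the elliptic estimate.

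The remaining assertions follow from by-now-standard tools applied to the constructed solution. For frozen $v$ the $u$-equation is a linear parabolic equation $u_t = \Delta u + b\cdot\nabla u + (\nabla\cdot b)u$ with $b = f(|\nabla v|^2)\nabla v$, so the strong maximum principle together with (\ref{u_0}) yields $u > 0$ in $\Omega\times(0,T_{\max})$, while $v>0$ was already obtained from the elliptic maximum principle. Integrating $u_t = \nabla\cdot(\nabla u + uf(|\nabla v|^2)\nabla v)$ over $\Omega$ and invoking the no-flux boundary condition in (\ref{1.0.0}) gives mass conservation (\ref{mass-0}). The extensibility criterion is the usual continuation argument: if $T_{\max}<\infty$ while $\|u(\cdot,t)\|_{L^\infty(\Omega)}$ stayed bounded, the elliptic estimates would keep $\nabla v$ and hence the drift bounded, parabolic regularity would provide uniform H\"older bounds up to $T_{\max}$, and the fixed-point construction could be restarted past $T_{\max}$, a contradiction. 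Finally, radial symmetry is inherited from uniqueness, since the whole problem is equivariant under rotations, so any rotate of a solution is again a solution with the same radial data.

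The step I expect to be the genuine obstacle is the self-consistent control of $\nabla v$ and of the flux-limiting nonlinearity. Everything hinges on an a priori $L^\infty$ bound for $\nabla v = \nabla\mathcal{S}[u]$ that is uniform over $\mathcal{M}_T$ and on keeping $|\nabla v|^2$ inside a compact set on which $f$, $f'$, $f''$ are bounded --- only then does the mere $C^2$-regularity (\ref{f-lip}) deliver the Lipschitz bounds needed for the contraction. The Dirichlet condition $v=M$ is helpful here, as it pins the boundary values and, via the elliptic maximum principle, prevents $v$ from degenerating; the price is that one must carry the $W^{2,q}$-to-$C^1$ embedding with $q>n$ throughout, which is exactly what forces the threshold $q>n$ appearing in the statement.
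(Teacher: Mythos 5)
Your proposal is exactly the route the paper intends: the paper gives no proof of this proposition and simply invokes ``a direct adaptation of well-known fixed-point arguments'' from Tao--Winkler (2019), which is precisely the elliptic-solution-operator-plus-Banach-fixed-point scheme you describe, and all the auxiliary claims (positivity, mass conservation via the no-flux condition, the extensibility criterion, radial symmetry from rotational equivariance and uniqueness) are handled the standard way. The one step that does not work as written is the invariance of the set $\mathcal{M}_T$ of \emph{nonnegative} functions under $\Phi$: the image $\Phi(u)$ solves a linear heat equation with the sign-indefinite source $\nabla\cdot\big(u f(|\nabla v|^2)\nabla v\big)$ (in which $u$, not $\Phi(u)$, appears), so the parabolic maximum principle does not show $\Phi(u)\geqslant 0$. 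The standard repair is to drop the nonnegativity constraint from the fixed-point set and define $\mathcal{S}[w]:=\mathcal{S}[w_+]$ so that the elliptic step remains coercive, then observe a posteriori that the fixed point $u$ solves $u_t=\Delta u+f(|\nabla v|^2)\nabla v\cdot\nabla u+u\,\nabla\cdot\big(f(|\nabla v|^2)\nabla v\big)$, a linear parabolic equation with bounded coefficients, whence $u\geqslant 0$ (indeed $u>0$ by the strong maximum principle) and $u_+=u$ -- which is the argument you already use later for positivity, just deployed in the right place.
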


Now we state our main results. The existence of global bounded solutions for system (\ref{1.0.0}) with $\tau \in \left\{0,1 \right\}$ is stated as follows:
\begin{thm}\label{0.3}
Let $\Omega=B_{R}(0) \subset \mathbb{R}^n$ $(n \geqslant 2)$ with $R>0$. Suppose that $f(\xi)$ satisfies $(\ref{f-lip})$ and
\begin{align}\label{f-global-1}
  f(\xi) \leqslant  K_{1}(1+\xi)^{-\alpha}, 
  \quad \xi \geqslant 0
\end{align}
with some $\alpha > \frac{n-2}{2n}$ and $K_{1}>0$. Then for any radially symmetric $u_0$ and $v_0$ complying with $(\ref{u_0})$ and $(\ref{v 0})$ respectively, the radially symmetric solution $(u,v)$ of $(\ref{1.0.0})$ with $\tau=1$ is global and bounded in the sense that
\begin{align*}
  \|u(\cdot, t)\|_{L^{\infty}\left(\Omega \right)} \leqslant C, 
  \quad  t>0
\end{align*}
with some $C>0$ depending on $\alpha$, $K_{1}$ and $R$.
\end{thm}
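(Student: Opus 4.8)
The strategy is to use radial symmetry to collapse (\ref{1.0.0}) into a scalar problem for the accumulated mass, and then to run a comparison/energy argument in which the flux limitation tames the destabilizing repulsive drift. Writing $r=|x|$ and $s=r^{n}$, I would introduce
\[
w(s,t)=\int_{0}^{s^{1/n}}\rho^{\,n-1}u(\rho,t)\,d\rho ,\qquad s\in[0,R^{n}],
\]
so that $w(\cdot,t)$ is nonnegative and nondecreasing with $w(0,t)=0$, with $w(R^{n},t)$ fixed by the mass identity (\ref{mass-1}), and with $u=n\,w_{s}$ at $r=s^{1/n}$. Inserting the radial form of the first equation of (\ref{1.0.0}) and using the no-flux condition, $w$ solves the scalar degenerate-parabolic equation
\[
w_{t}=n^{2}s^{2-\frac{2}{n}}w_{ss}+n\,s^{\frac{n-1}{n}}f\!\left(v_{r}^{2}\right)v_{r}\,w_{s},\qquad s\in(0,R^{n}),\ t>0,
\]
whose only coupling to $v$ sits in the drift coefficient $n\,s^{(n-1)/n}f(v_{r}^{2})v_{r}$. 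A pointwise bound on $u$ is then equivalent to a Lipschitz-in-$s$ bound on $w$.

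The pivotal step is a pointwise estimate of $v_{r}$ in terms of the mass. Writing the second equation radially and integrating gives
\[
v_{r}(r,t)=\frac{1}{r^{\,n-1}}\int_{0}^{r}\rho^{\,n-1}\bigl(u v-\phi_{t}\bigr)\,d\rho,\qquad \phi:=M-v\geqslant 0 .
\]
The consumption term is where the structure of the problem is decisive: the crude bound $v\leqslant M$ from Proposition \ref{0.0-1} already yields $\frac{1}{r^{n-1}}\int_{0}^{r}\rho^{n-1}uv\leqslant M\,w\,s^{-(n-1)/n}$, but to reach the sharp threshold one must instead exploit that consumption depletes $v$ precisely where $u$ concentrates; I would therefore upgrade this using the dissipative information carried by the consumption equation (for instance a uniform bound on $\int_{\Omega}uv$ together with the radial monotonicity $v_{r}\geqslant 0$ near $\partial\Omega$, which follows from $\phi\geqslant 0$ and $\phi=0$ on the boundary). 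The remaining term $\frac{1}{r^{n-1}}\int_{0}^{r}\rho^{n-1}\phi_{t}$, i.e. the genuinely parabolic contribution present only when $\tau=1$, must be controlled by separate radial a priori estimates on $v_{t}$; this is the step for which radial symmetry is indispensable.

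With $v_{r}$ under control, the flux limitation enters through
\[
f\!\left(v_{r}^{2}\right)\lvert v_{r}\rvert\leqslant K_{1}\bigl(1+v_{r}^{2}\bigr)^{-\alpha}\lvert v_{r}\rvert\lesssim 1+\lvert v_{r}\rvert^{\,(1-2\alpha)_{+}},
\]
so that the drift coefficient degenerates for large gradients exactly when $\alpha>0$ and, after substituting the $v_{r}$ estimate, becomes a controllable power of $s$ and $w$. I would then close the argument by one of two equivalent routes: either construct a time-independent concave supersolution $\overline{w}(s)$ of the scalar equation with $\overline{w}_{s}\in L^{\infty}(0,R^{n})$ and invoke the comparison principle to deduce $w\leqslant\overline{w}$ and hence $u=n\,w_{s}\leqslant n\lVert\overline{w}_{s}\rVert_{L^{\infty}}$; or test the $u$-equation with $u^{p-1}$, absorb the cross term $\frac{p-1}{p}\int f\nabla(u^{p})\cdot\nabla v$ into the dissipation $\frac{4(p-1)}{p^{2}}\int\lvert\nabla u^{p/2}\rvert^{2}$ via Young's inequality together with a Gagliardo--Nirenberg interpolation, and then pass to $L^{\infty}$ by Moser iteration. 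In either route the balance between the diffusion scaling $s^{2-2/n}$ and the flux-limited drift is what singles out the condition $\alpha>\frac{n-2}{2n}$: below this threshold the interpolation (equivalently, the admissibility of the supersolution) fails to be subcritical.

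I expect the main obstacle to be the parabolic term $\phi_{t}=-v_{t}$ in the representation of $v_{r}$. In the elliptic case $\tau=0$ it is absent and the gradient is read off directly from the mass, which is why the corresponding $\tau=0$ statement dispenses with radial symmetry; for $\tau=1$ one must instead produce a uniform-in-time control of $v_{t}$ (equivalently of $v_{rt}$) without destroying the $\alpha$-threshold. I would attack this by integrating the radial $v$-equation and using $0\leqslant\phi\leqslant M$ to bound $\int_{0}^{r}\rho^{n-1}\phi_{t}=\frac{d}{dt}\int_{0}^{r}\rho^{n-1}\phi$ in a time-averaged sense, feeding this into the comparison argument. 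Secondary technical points are the justification of the comparison principle at the degenerate endpoint $s=0$ and at $s=R^{n}$ (where the boundary flux must be accommodated), and the final conversion of the accumulated-mass bound into the claimed uniform estimate $\lVert u(\cdot,t)\rVert_{L^{\infty}(\Omega)}\leqslant C$.
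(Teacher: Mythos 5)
Your primary route does not close for $\tau=1$. Writing $r^{n-1}v_r=\int_0^r\rho^{n-1}\left(uv+v_t\right)\mathrm{d}\rho$ leaves the genuinely parabolic contribution $\int_0^r\rho^{n-1}v_t\,\mathrm{d}\rho$, and a supersolution comparison for the scalar equation satisfied by $w$ requires the drift coefficient $n\,s^{(n-1)/n}f(v_r^2)v_r$ to be dominated pointwise in $(s,t)$ by a function of $s$ and $w$ alone. The time-averaged control of $\frac{\mathrm{d}}{\mathrm{d}t}\int_0^r\rho^{n-1}(M-v)$ that you propose cannot be fed into a pointwise comparison argument, and you give no mechanism for upgrading it; this is precisely why the mass-distribution machinery is used in this paper (and in the cited literature) only for the elliptic blow-up result, where $v_t$ is absent, and not for boundedness when $\tau=1$.

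Your fallback energy route is essentially the approach the paper takes, but your sketch stops exactly where the difficulty of the Dirichlet problem is concentrated. After Young's inequality the cross term leaves $\int_\Omega u^p|\nabla v|^{2-4\alpha}$, which for the parabolic signal equation cannot be absorbed using only $\int_\Omega u^{p-2}|\nabla u|^2$ and the mass; the paper couples $\frac1p\int_\Omega u^p$ with $\frac1{2k}\int_\Omega|\nabla v|^{2k}$, and differentiating the latter under the boundary condition $v=M$ produces the nonvanishing boundary integrals $\int_{\partial\Omega}|\nabla v|^{2(k-1)}\frac{\partial|\nabla v|^2}{\partial n}$ and $\int_{\partial\Omega}|\nabla v|^{2(k-1)}uv\left|\frac{\partial v}{\partial n}\right|$. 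Controlling these is where radial symmetry is actually needed: the paper establishes $\int_t^{t+h}u(R,\sigma)\,\mathrm{d}\sigma\leqslant C$ (Lemma~\ref{uR-time-all}) and a uniform bound on $|v_r|$ near $\partial\Omega$ (Lemma~\ref{vr-all}) via localized testing and one-dimensional heat-semigroup estimates, and then uses the identity $\frac{\partial|\nabla v|^2}{\partial\nu}=2uvv_r-\frac{2}{R}v_r^2\leqslant 2uvv_r$ on $\partial B_R$ together with Lemma~\ref{lem-3.1.0.2} to integrate the resulting differential inequality. None of this appears in your proposal, and without it neither of your routes reaches the stated threshold $\alpha>\frac{n-2}{2n}$, which in the paper emerges from the admissibility of the Gagliardo--Nirenberg exponents in Lemma~\ref{lem-3.1.2} rather than from a supersolution scaling balance.
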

\begin{thm}\label{0.2}
Let $\Omega \subset \mathrm{R}^n$ $(n \geqslant 2)$ be a bounded domain with smooth boundary. Suppose that $f(\xi)$ satisfies $(\ref{f-lip})$ and
\begin{align}\label{f-global}
  f(\xi)\leqslant K_{0} (1+\xi)^{-\alpha}, 
  \quad \xi \geqslant 0
\end{align}
with some $\alpha > \frac{n-2}{2n}$ and $K_{0}>0$. Then for any $u_0$ complying with $(\ref{u_0})$, the solution $(u,v)$ of $(\ref{1.0.0})$ with $\tau=0$ is global and bounded in the sense that
\begin{align*}
  \|u(\cdot, t)\|_{L^{\infty}\left(\Omega \right)} \leqslant C, 
  \quad  t>0
\end{align*}
with some $C>0$ depending on $\alpha$ and $K_{0}$.
\end{thm}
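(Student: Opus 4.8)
The plan is to establish boundedness through uniform-in-time $L^p$ estimates for $u$ (for every finite $p$), followed by a standard bootstrap to $L^\infty$, the whole scheme being driven by one a priori gradient bound for $v$ that the consumption structure supplies for free. Concretely, writing $w:=M-v$, Proposition~\ref{0.0-0} gives $0\le w<M$ with $w=0$ on $\partial\Omega$ and $-\Delta w=uv$; testing this elliptic identity against $w$ and using $v(M-v)\le\frac{M^2}{4}$ together with the mass identity (\ref{mass-0}) yields
\[
\int_\Omega|\nabla v|^2=\int_\Omega uv(M-v)\le\frac{M^2}{4}\int_\Omega u_0
\]
for all $t\in(0,T_{\max})$. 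This time-independent $L^2$ bound for $\nabla v$, rather than the weaker one obtainable from elliptic regularity applied to the source $uv\le Mu$, is what will ultimately produce the exponent $\frac{n-2}{2n}$ instead of the value $\frac{n-2}{2(n-1)}$ familiar from the production case.

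First I would test the first equation in (\ref{1.0.0}) against $u^{p-1}$. Using the flux boundary condition, the two boundary contributions cancel, leaving
\[
\frac1p\frac{d}{dt}\int_\Omega u^p+(p-1)\int_\Omega u^{p-2}|\nabla u|^2=-(p-1)\int_\Omega u^{p-1}f(|\nabla v|^2)\,\nabla u\cdot\nabla v.
\]
An application of Young's inequality absorbs half of the dissipation and reduces matters to estimating $\int_\Omega u^p f(|\nabla v|^2)^2|\nabla v|^2$. Here the flux limitation enters: from (\ref{f-global}) one has $f(|\nabla v|^2)^2|\nabla v|^2\le K_0^2(1+|\nabla v|^2)^{1-2\alpha}\le C\bigl(1+|\nabla v|^\theta\bigr)$ with $\theta:=(2-4\alpha)_+$. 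When $\alpha\ge\frac12$ the drift is bounded and the argument is routine, so the substantive regime is $\frac{n-2}{2n}<\alpha<\frac12$, where $0<\theta<\frac4n$ and it remains to control $\int_\Omega u^p|\nabla v|^\theta$.

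The decisive step is to split this term by Hölder's inequality with the exponent tuned so that $\nabla v$ appears in $L^2$ only, namely
\[
\int_\Omega u^p|\nabla v|^\theta\le\Bigl(\int_\Omega u^{p\sigma}\Bigr)^{1/\sigma}\Bigl(\int_\Omega|\nabla v|^{2}\Bigr)^{\theta/2},\qquad \sigma=\frac{2}{2-\theta},
\]
after which the second factor is bounded once and for all by the consumption estimate above. Writing $\int_\Omega u^{p\sigma}=\|u^{p/2}\|_{L^{2\sigma}}^2$ and invoking the Gagliardo--Nirenberg inequality together with the conserved mass, I would express the remaining factor through $\int_\Omega|\nabla u^{p/2}|^2$; the admissibility condition $2\sigma\le\frac{2n}{n-2}$ for this interpolation is precisely $\theta\le\frac4n$, i.e.\ $\alpha\ge\frac{n-2}{2n}$, and the strict inequality $\alpha>\frac{n-2}{2n}$ makes the resulting power of $\int_\Omega|\nabla u^{p/2}|^2$ strictly below one, so that it can be absorbed into the dissipation. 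Using Gagliardo--Nirenberg once more to generate a negative multiple of $\int_\Omega u^p$, this produces an autonomous differential inequality of the form $\frac{d}{dt}\int_\Omega u^p\le -c\int_\Omega u^p+C$ and hence a bound on $\sup_{t>0}\|u(\cdot,t)\|_{L^p(\Omega)}$ for each finite $p$.

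With uniform $L^p$ control in hand for some $p>n$, elliptic regularity gives $\|v\|_{W^{2,p}(\Omega)}\le C(1+\|u\|_{L^p(\Omega)})\le C$ and thus a uniform bound on $\|\nabla v\|_{L^\infty(\Omega)}$; the chemotactic drift $f(|\nabla v|^2)\nabla v$ is then bounded, and a Moser iteration (or Neumann heat-semigroup estimates) upgrades the $L^p$ bound to the claimed $L^\infty$ bound, after which the extensibility criterion of Proposition~\ref{0.0-0} forces $T_{\max}=\infty$. I expect the only delicate point to be the matching of the Gagliardo--Nirenberg admissibility threshold with the hypothesis on $\alpha$; everything hinges on first securing the $t$-independent $L^2$ gradient bound for $v$, since without exploiting consumption the same scheme would only reach $\alpha>\frac{n-2}{2(n-1)}$.
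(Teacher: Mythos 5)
Your proposal is correct and follows essentially the same route as the paper: the time-independent $L^2$ gradient bound from testing the elliptic equation against $M-v$ (Lemma~\ref{lem-3.1.0}), the $u^{p-1}$ test with the case split at $\alpha=\tfrac12$, the Hölder split with exponent $\sigma=\tfrac{1}{2\alpha}$ isolating $\bigl(\int_\Omega|\nabla v|^2\bigr)^{1-2\alpha}$, the Gagliardo--Nirenberg absorption whose admissibility is exactly $\alpha>\tfrac{n-2}{2n}$ (Lemma~\ref{lem-3.1.1}), and the elliptic-regularity plus Moser iteration conclusion. No substantive differences.
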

The following theorem is concerned with the finite-time blow-up solution of the 2D system (\ref{1.0.0}) with $\tau=0$, which indicates that $\alpha=0$ is the optimal critical exponent for the occurrence of blow-up phenomenon in the 2D parabolic-elliptic problem:
\begin{thm}\label{0.1}
Let $\Omega=B_R(0) \subset \mathbb{R}^2$ with $R>0$. Suppose that $f(\xi)$ satisfies $(\ref{f-lip})$ and
\begin{align}\label{f-blow-up}
  f(\xi)\geqslant k (1+\xi)^{-\alpha}, 
  \quad \xi \geqslant 0
\end{align}
with some $ \alpha<0 $ and $k>0$. Then, for any radially symmetric $u_0$ complying with $(\ref{u_0})$, there exists a constant $M^{\star}=M^{\star}\left(u_0\right)>0$, whenever $M\geqslant M^{\star}$, the corresponding classical solution $(u, v)$ of $(\ref{1.0.0})$ with $\tau=0$ blows up in finite time.
\end{thm}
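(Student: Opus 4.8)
The plan is to argue by contradiction: assuming the solution is global, I would exhibit a weighted moment of the accumulated mass which the dynamics force to become infinite in finite time, even though it stays bounded by the conserved mass as long as the solution is smooth. To this end I first pass to the radial mass--accumulation variable. With $r=|x|$, $s=r^{2}$ and
\[
w(s,t)=\int_{0}^{\sqrt{s}}\rho\,u(\rho,t)\,d\rho,\qquad s\in[0,R^{2}],
\]
one has $w(0,t)=0$, $w_{s}=\tfrac12u\ge0$, and $w(R^{2},t)=\tfrac1{2\pi}\int_{\Omega}u_{0}=:m$ is constant by (\ref{mass-0}). A direct computation rewrites the first equation of (\ref{1.0.0}) as the one--dimensional degenerate equation
\[
w_{t}=4s\,w_{ss}+2\sqrt{s}\,w_{s}\,f\!\big(q^{2}\big)\,q,\qquad q:=v_{r}(\sqrt{s},t)\ge0,
\]
while integrating $0=\tfrac1r(rv_{r})_{r}-uv$ gives $q=s^{-1/2}\int_{0}^{\sqrt{s}}\sigma\,u(\sigma)v(\sigma)\,d\sigma$. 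Since $w_{s},q\ge0$ and $f>0$, the transport term is nonnegative, the analytic signature of the inward, repulsive motion of mass.

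Second, I inject the size of $M$ through a fixed--annulus lower bound on $v$. Because $v\le M$ (Proposition \ref{0.0-0}) and $w\le m$, the formula for $q$ yields $v_{r}(r)\le Mm/r$, so integrating inward from $\partial\Omega$ gives $M-v(r_{0})\le Mm\log(R/r_{0})$. Fixing $r_{0}=r_{0}(m)\in(0,R)$ with $m\log(R/r_{0})\le\tfrac12$ forces $v\ge\tfrac M2$ on $[r_{0},R]$, a bound uniform in $t$ and proportional to $M$. Consequently, for $s\in[r_{0}^{2},R^{2}]$ one gets $q\ge\tfrac{M}{2\sqrt{s}}\big(w(s)-w(r_{0}^{2})\big)$, and since $\alpha<0$ the hypothesis (\ref{f-blow-up}) reads $f(\xi)\ge k(1+\xi)^{|\alpha|}\ge k\xi^{|\alpha|}$; hence $f(q^{2})q\ge kq^{2|\alpha|+1}$ is bounded below by $c\,M^{2|\alpha|+1}s^{-(|\alpha|+\frac12)}\big(w-w(r_{0}^{2})\big)^{2|\alpha|+1}$, a quantity superlinear in the mass and carrying an explicit positive power of $M$.

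Third, I test the $w$--equation against a singular weight. For $\gamma\in(0,1)$ and a suitable $s_{0}\in(r_{0}^{2},R^{2}]$ set
\[
\phi(t)=\int_{0}^{s_{0}}s^{-\gamma}\,w(s,t)\,ds,
\]
so that $\phi(t)\le\tfrac{m}{1-\gamma}s_{0}^{1-\gamma}$ for all $t\in(0,T_{\max})$ by mass conservation. Differentiating, the diffusion part integrates by parts into a negative loss $-4\gamma(1-\gamma)\int_{0}^{s_{0}}s^{-\gamma-1}w\,ds$ plus boundary contributions, while the transport part, after a further integration by parts and the bound of the previous step, delivers a positive term of the form $c\,k\,M^{2|\alpha|+1}\int s^{-\gamma-|\alpha|-1}\big(w-w(r_{0}^{2})\big)^{2|\alpha|+2}\,ds$. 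Comparing these weighted integrals with $\phi$ via H\"older-- and Jensen--type inequalities, the goal is an autonomous differential inequality
\[
\phi'(t)\ge c_{1}M^{2|\alpha|+1}\,\phi^{\,p}(t)-c_{2}\phi(t)-c_{3},\qquad p>1,
\]
in which $p>1$ is precisely the imprint of $\alpha<0$. Choosing $M\ge M^{\star}(u_{0})$ so large that $c_{1}M^{2|\alpha|+1}\phi(0)^{p}$ dominates the lower--order terms---possible since $\phi(0)>0$ by (\ref{u_0})---an elementary ODE comparison makes $\phi$ blow up at some finite $t^{\star}$; as $\phi\le\tfrac{m}{1-\gamma}s_{0}^{1-\gamma}$ throughout $(0,T_{\max})$, the solution cannot remain smooth up to $t^{\star}$, whence $T_{\max}\le t^{\star}<\infty$.

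I expect the closure of this differential inequality to be the main obstacle. The diffusion loss $\int s^{-\gamma-1}w$ and the transport gain $\int s^{-\gamma-|\alpha|-1}(w-w(r_{0}^{2}))^{2|\alpha|+2}$ carry different singular weights and different homogeneities in $w$, so proving that the superlinear term dominates---uniformly in the profile of $w$ and with the correct $\alpha$--dependent exponent $p>1$---hinges on a careful joint choice of $\gamma$ and $s_{0}$ together with weighted interpolation. A subordinate difficulty is that the $M$--proportional lower bound for $v$ is available only on the annulus $[r_{0},R]$, which forces the estimates to be organised around the annular mass $w(s)-w(r_{0}^{2})$ rather than $w(s)$, and to absorb the uncontrolled inner contribution using merely the nonnegativity of the transport term there.
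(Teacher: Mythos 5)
Your overall architecture (the mass accumulation function $w$, the degenerate one-dimensional equation for $w$, a singularly weighted moment $\phi$ whose forced growth contradicts the a priori bound on $\phi$ coming from mass conservation) is exactly the paper's, and your observation that $f(q^2)q\geqslant k\,q^{1-2\alpha}$ is the right way to exploit $\alpha<0$. The genuine gap is in your second step: the lower bound $v\geqslant\frac M2$ is available only on an annulus $[r_0,R]$ with $r_0$ bounded away from the origin, so the transport gain you can extract is $c\,M^{1+2|\alpha|}\int_{r_0^2}^{s_0}s^{-\gamma-|\alpha|-1}\bigl(w(s,t)-w(r_0^2,t)\bigr)^{2+2|\alpha|}\,ds$, which sees only the \emph{annular} mass increment. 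But the blow-up mechanism here is concentration at the origin (the drift pushes cells down the gradient of $v$, i.e.\ inward), and $\phi$ is driven precisely by $w$ near $s=0$. In the configuration where essentially all of the mass has already entered $B_{r_0}$, one has $w(s,t)-w(r_0^2,t)\approx 0$ for $s\geqslant r_0^2$ while $\phi$ is close to its a priori maximum; your gain term then degenerates to the trivial bound $\geqslant 0$, and no inequality of the form $\phi'\geqslant c_1M^{1+2|\alpha|}\phi^{\,p}-c_2\phi-c_3$ with $c_1>0$ can be extracted from your estimates, uniformly in the profile of $w$. So the ``main obstacle'' you flag is not a technical closure issue: with only an annular lower bound on $v$, no choice of $\gamma$, $s_0$ or weighted interpolation will rescue the argument.

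What the paper does instead is prove a pointwise lower bound for $v$ valid \emph{up to the origin}: an ODE-comparison bound $rv_r\geqslant \frac{U\,v}{1+\int_0^r U(\rho,t)\rho^{-1}\,d\rho}$ (Lemma~\ref{lem2.2.1}; this is genuinely different from the exact identity $rv_r=\int_0^r\rho uv\,d\rho$ you use, because it controls $v(\rho)$ inside the integral in terms of $v(r)$ from below), which upon integration yields $v(r,t)\geqslant M\exp\bigl[-\bigl(\tfrac{m}{2\pi}\ln\tfrac Rr\bigr)^{1/2}\bigr]$ on all of $(0,R]$ (Lemma~\ref{lem-2.2.2}), hence $v(\sqrt s,t)\geqslant MR^{-2\delta}e^{-m/(16\pi\delta)}s^{\delta}$ for every $\delta>0$. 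The arbitrarily small power $\delta$ is what makes the resulting gain $\psi(t)=\int_0^{R^2}s^{\delta(1-2\alpha)+\alpha-\gamma-1}w^{2-2\alpha}\,ds$ live on the whole interval including $s\to0$, and, crucially, large enough there that H\"older gives $\phi\leqslant C\psi^{1/(2-2\alpha)}$; this is the inequality that converts the gain into $\phi'\geqslant C\phi$ for large $M$ (the paper only needs exponential growth of $\phi$, not the superlinear ODE blow-up you aim for). If you replace your annular bound by this global one, the rest of your outline goes through essentially as in the paper.
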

\begin{rem}
Owing to the limitation of the method, the question of whether finite-time blow-up occurs in the parameter region $\alpha<\frac{n-2}{2n}$ remains unresolved.
\end{rem}
The rest of the paper is organized as follows. In section~\ref{section 3}, we prove the existence of global bounded solutions for system  (\ref{1.0.0}) with $\tau=1$ and $\alpha >\frac{n-2}{2n}$ under radial settings. In section~\ref{section 2}, we obtain global bounded classical solutions for system (\ref{1.0.0}) with $\tau=0$ and $\alpha >\frac{n-2}{2n}$ without any radial assumptions. In section~\ref{section 1}, we demonstrate that the solution of the 2D radially symmetric problem (\ref{1.0.0}) with $\tau=0$ and $\alpha<0$ blows up in finite time when the boundary signal level sufficiently large. 
\section{Boundedness when $\tau=1$ and $\alpha>\frac{n-2}{2n}$}\label{section 3}
In this section, we assume that $\Omega=B_R(0) \subset \mathbb{R}^n$ $(n \geqslant 2)$ and $\tau=1$. The purpose of this section is to derive the boundedness result stated in Theorem~\ref{0.3} under radial settings. The estimates of boundary terms are crucial in this section because $v$ satisfies the Dirichlet boundary conditions. Inspired by \cite{2022-N-LankeitWinkler}, we use the radial symmetry of the domain to consider the properties of the solution near the boundary, as listed in Lemma~\ref{uR-time-all} and Lemma~\ref{vr-all}. These properties help us to deal with the boundary integrals proposed in the study of the differential inequality.

We collect two lemmas which will be used later.

\begin{lem}
(\cite[Lemma 3.1]{2012-JDE-TaoWinkler}) Let $\beta>0$ and $\gamma>0$ be such that $\beta+\gamma<1$. Then for all $\varepsilon>0$ there exists $C_1(\varepsilon)>0$ such that 
\begin{align}\label{a-b}
a^\beta b^\gamma \leqslant \varepsilon(a+b)+C_1(\varepsilon),
 \quad  \text{for all } \ a \geqslant 0 \ \text{and} \ b \geqslant 0.
\end{align}
\end{lem}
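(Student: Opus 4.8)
The plan is to derive the estimate from the weighted arithmetic--geometric mean inequality (equivalently, Young's inequality with three exponents), exploiting the gap $\delta := 1 - \beta - \gamma > 0$ guaranteed by the hypothesis $\beta + \gamma < 1$. The point is that this gap provides a third ``slot'' $1^{\delta}$ that can absorb a constant, while an auxiliary scaling parameter converts the resulting fixed linear coefficient into an arbitrarily small one.

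First I would record the elementary weighted AM--GM bound: for nonnegative reals $y_1, y_2, y_3$ and weights $w_1 + w_2 + w_3 = 1$ with $w_i \geq 0$, one has $y_1^{w_1} y_2^{w_2} y_3^{w_3} \leq w_1 y_1 + w_2 y_2 + w_3 y_3$. Applying this with the weights $(w_1,w_2,w_3) = (\beta,\gamma,\delta)$ and the values $(y_1,y_2,y_3) = (\mu a, \mu b, 1)$, where $\mu > 0$ is a scaling parameter to be fixed, yields
\begin{align*}
(\mu a)^{\beta} (\mu b)^{\gamma} \leq \beta \mu a + \gamma \mu b + \delta .
\end{align*}
Dividing by $\mu^{\beta+\gamma}$ and using $1 - \beta - \gamma = \delta$, this rearranges to
\begin{align*}
a^{\beta} b^{\gamma} \leq \mu^{\delta}\bigl(\beta a + \gamma b\bigr) + \delta\, \mu^{-(\beta+\gamma)} \leq \mu^{\delta}(a+b) + \delta\, \mu^{-(\beta+\gamma)},
\end{align*}
where in the last step I bounded $\beta, \gamma \leq 1$.

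Finally, given $\varepsilon > 0$, I would choose $\mu = \mu(\varepsilon) > 0$ so small that $\mu^{\delta} \leq \varepsilon$, which is possible precisely because $\delta > 0$; setting $C_1(\varepsilon) := \delta\, \mu(\varepsilon)^{-(\beta+\gamma)}$ then gives the claimed inequality $a^{\beta} b^{\gamma} \leq \varepsilon(a+b) + C_1(\varepsilon)$ for all $a, b \geq 0$. The argument is entirely elementary and contains no genuine obstacle; the only point requiring care is the order of operations in the scaling, namely that the gap $\delta$ must be used both to create the absorbing constant term (the slot $1^{\delta}$) and to make the coefficient $\mu^{\delta}$ of the linear part vanish as $\mu \to 0$. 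An alternative route---first applying Young's inequality with exponents $\frac{\beta+\gamma}{\beta}$ and $\frac{\beta+\gamma}{\gamma}$ to reduce $a^{\beta} b^{\gamma}$ to a multiple of $a^{\beta+\gamma} + b^{\beta+\gamma}$, and then estimating each sublinear power $t^{\beta+\gamma} \leq \varepsilon t + C$ separately---leads to the same conclusion.
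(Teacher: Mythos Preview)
Your argument is correct. The paper does not actually supply a proof of this lemma; it merely cites it from \cite[Lemma 3.1]{2012-JDE-TaoWinkler} as a known elementary inequality. Your three-weight AM--GM argument with the scaling parameter $\mu$ is a clean self-contained derivation, and the alternative route you mention (Young with exponents $\tfrac{\beta+\gamma}{\beta}$, $\tfrac{\beta+\gamma}{\gamma}$ followed by $t^{\beta+\gamma}\leqslant \varepsilon t + C$) is essentially the argument in the cited reference.
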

\begin{lem}\label{lem-3.1.0.2}
(\cite[Lemma 3.4]{2019-JFA-Winkler}) Let $t_0 \in \mathbb{R}, T \in\left(t_0, \infty\right]$, $h>0$ and $b>0$. Assume that the nonnegative function $g \in$ $L_{\mathrm{loc }}^1(\mathbb{R})$ satisfies
\begin{align*}
\frac{1}{h} \int_t^{t+h} g(s) d s \leqslant b, \quad t \in\left(t_0, T\right) \text {. }
\end{align*}
Then for any $a>0$ we have
\begin{align*}
\int_{t_0}^t e^{-a(t-s)} g(s) d s \leqslant \frac{b h}{1-e^{-a h}}, \quad t \in\left[t_0, T\right) \text {. }
\end{align*}
Consequently, if $y \in C^0\left(\left[t_0, T\right)\right) \cap C^1\left(\left(t_0, T\right)\right)$ satisfies
\begin{align*}
y^{\prime}(t)+a y(t) \leqslant g(t), \quad t \in\left(t_0, T\right),
\end{align*}
then
\begin{align*}
y(t) \leqslant y\left(t_0\right)+\frac{b h}{1-e^{-a h}}, \quad t \in\left[t_0, T\right).
\end{align*}
\end{lem}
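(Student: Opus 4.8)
The plan is to establish the two assertions in turn, with the second following from the first by a standard integrating-factor argument, so that the genuine content resides entirely in the exponentially weighted integral bound.

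For the first inequality I would exploit the window hypothesis by partitioning the integration interval $[t_0,t]$ from the right into successive blocks of length $h$. Concretely, for $k=0,1,2,\dots$ set $J_k := [t-(k+1)h,\,t-kh]\cap[t_0,t]$; these sets cover $[t_0,t]$ and are pairwise disjoint up to endpoints. On $J_k$ one has $t-s\ge kh$, whence $e^{-a(t-s)}\le e^{-akh}$, while the mean-value assumption yields $\int_{J_k} g\le bh$: for the interior blocks this is the hypothesis applied at $\sigma=t-(k+1)h\in(t_0,T)$, and for the single leftmost block, which is truncated at $t_0$, I use $g\ge 0$ together with the fact that this block then lies inside $[t_0,t_0+h]$, plus a limiting argument to cover the boundary value $\sigma=t_0$. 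Summing these estimates over $k$ produces a geometric series,
\begin{align*}
\int_{t_0}^t e^{-a(t-s)}g(s)\,ds \;\le\; \sum_{k=0}^{\infty} e^{-akh}\,bh \;=\; \frac{bh}{1-e^{-ah}},
\end{align*}
which is exactly the claimed bound.

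The \emph{consequently} part is then routine. Multiplying $y'+ay\le g$ by the integrating factor $e^{at}$ gives $(e^{at}y)'\le e^{at}g$, and integrating over $(t_0,t)$ yields
\begin{align*}
y(t)\;\le\; e^{-a(t-t_0)}y(t_0)+\int_{t_0}^t e^{-a(t-s)}g(s)\,ds.
\end{align*}
Since $a>0$ forces $e^{-a(t-t_0)}\le 1$, the first term is dominated by $y(t_0)$ (using $y(t_0)\ge 0$, as is automatic in the intended applications where $y$ is a norm-type quantity), and the integral term is controlled by the first inequality, giving $y(t)\le y(t_0)+\tfrac{bh}{1-e^{-ah}}$.

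I do not anticipate any genuine obstacle: this is a smoothing/averaging estimate whose only delicate points are the bookkeeping of the leftmost truncated block and the handling of the boundary value $\sigma=t_0$, both dispatched by the nonnegativity of $g$ and an elementary limiting argument. The summation of the geometric series and the integrating-factor step are entirely standard, so the proof is short and essentially mechanical once the right-endpoint partition is chosen.
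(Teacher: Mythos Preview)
The paper does not prove this lemma at all; it merely cites \cite[Lemma~3.4]{2019-JFA-Winkler} and states the result. Your argument---partitioning $[t_0,t]$ into length-$h$ blocks from the right, bounding the exponential weight by its supremum $e^{-akh}$ on each block, invoking the window hypothesis together with nonnegativity of $g$ for the truncated leftmost block, and summing the geometric series---is precisely the standard proof and is correct. The integrating-factor step is likewise routine; your caveat that the bound $e^{-a(t-t_0)}y(t_0)\le y(t_0)$ requires $y(t_0)\ge 0$ is accurate and is indeed satisfied in every application the paper makes of this lemma.
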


 We provide a basic observation for $v$ as follows:
\begin{lem}\label{heat semigroup}
Let $(u,v)$ be the classical solution of problem $(\ref{1.0.0})$ with $\tau=1$. Suppose that $(\ref{f-lip})$, $(\ref{u_0})$ and $(\ref{v 0})$ are valid. Then, for each $a \in\left[1, \frac{n}{n-1}\right)$, $\sigma \in (1,+\infty)$ and $r_0 \in (0,R)$, there exist constants $C_2=C_2(a)>0$ and $C_3=C_3(\sigma,r_0)>0$ such that
\begin{align}\label{nablavs}
  \|\nabla v(\cdot, t)\|_{L^{ a}(\Omega)} \leqslant C_2,
  \quad t \in (0,T_{\max})
\end{align}
and
\begin{align}\label{vr semilinear estimate}
  \| v_r(\cdot, t)\|_{L^{ \sigma}(r_0,R)} \leqslant C_3,
  \quad t \in (0,T_{\max}).
\end{align}
\end{lem}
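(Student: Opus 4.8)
The plan is to read the second equation of \eqref{1.0.0} as a linear inhomogeneous heat equation for $v$ whose forcing $uv$ is controlled only in $L^{1}(\Omega)$, and to extract the two estimates from this viewpoint. The only information on $u$ I allow myself (so that the lemma remains independent of the boundedness we ultimately want) is the mass bound \eqref{mass-1} combined with \eqref{vmax}, which gives the uniform control
\[
\int_\Omega u(\cdot,t)\,v(\cdot,t) \leqslant \|v_0\|_{L^\infty(\Omega)}\int_\Omega u_0 =: C_\star, \qquad t\in(0,T_{\max}).
\]

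For \eqref{nablavs} I would pass to $w:=v-M$, which solves the homogeneous-Dirichlet problem $w_t=\Delta w-uv$, $w|_{\partial\Omega}=0$, $w(\cdot,0)=v_0-M$, and write the variation-of-constants representation
\[
w(\cdot,t)=e^{t\Delta_D}(v_0-M)-\int_0^t e^{(t-s)\Delta_D}\big(u(s)\,v(s)\big)\,ds
\]
with respect to the Dirichlet heat semigroup $(e^{t\Delta_D})_{t\ge0}$. Applying the gradient and invoking the standard $L^p$--$L^q$ smoothing estimates, the initial-data term is bounded uniformly in $t$ because $v_0-M\in W^{1,\infty}(\Omega)$ vanishes on $\partial\Omega$, while the Duhamel integral is estimated by
\[
\int_0^t \big\|\nabla e^{(t-s)\Delta_D}\big(u(s)v(s)\big)\big\|_{L^a(\Omega)}\,ds \leqslant C\int_0^t\Big(1+(t-s)^{-\frac12-\frac n2(1-\frac1a)}\Big)e^{-\lambda(t-s)}\,\big\|u(s)v(s)\big\|_{L^1(\Omega)}\,ds,
\]
where $\lambda>0$ is the first Dirichlet eigenvalue. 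The restriction $a<\tfrac{n}{n-1}$ is precisely what renders the exponent $\tfrac12+\tfrac n2(1-\tfrac1a)$ strictly below $1$, so the kernel is integrable near $s=t$; together with the bound $C_\star$ and the exponential decay this yields \eqref{nablavs}.

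The estimate \eqref{vr semilinear estimate} is where the radial structure really enters, and it is the step I expect to be most delicate, since one asks for $L^\sigma$ control with \emph{arbitrarily large} $\sigma$ out of a forcing that lives only in $L^1$. The key idea is to integrate the radial identity $\rho^{n-1}v_t=(\rho^{n-1}v_\rho)_\rho-\rho^{n-1}uv$ from $0$ to $r$ and to work with the primitive $F(r,t):=\int_0^r\rho^{n-1}v(\rho,t)\,d\rho$. Using $F_r=r^{n-1}v$ and the vanishing of $\rho^{n-1}v_\rho$ at $\rho=0$, a direct computation shows that $F$ solves the uniformly parabolic one-dimensional equation
\[
F_t=F_{rr}-\frac{n-1}{r}F_r-G(r,t),\qquad G(r,t):=\int_0^r\rho^{n-1}u v\,d\rho,
\]
on $(\tfrac{r_0}{2},R)\times(0,T_{\max})$, with the Neumann-type datum $F_r(R,t)=R^{n-1}M$ inherited from $v(R,t)=M$. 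The crucial gain is that, although $uv$ is only an $L^1$ forcing for $v$, its radial primitive $G$ is \emph{uniformly bounded}, $0\leqslant G\leqslant C_\star/\omega_{n-1}$ with $\omega_{n-1}:=|\partial B_1(0)|$, and likewise $0\leqslant F\leqslant \tfrac{R^n}{n}\|v_0\|_{L^\infty(\Omega)}$; moreover $r\geqslant\tfrac{r_0}{2}$ keeps the coefficient $\tfrac{n-1}{r}$ harmless.

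With this reformulation I would apply interior parabolic $L^\sigma$ regularity (away from $r=\tfrac{r_0}{2}$, localizing by a cutoff) together with boundary $L^\sigma$ regularity near $r=R$, where the constant Neumann datum causes no difficulty (one may even subtract $R^{n-1}M(r-R)$ to reduce to homogeneous Neumann data and a still-bounded right-hand side). This yields, for every $\sigma\in(1,\infty)$, a bound for $\|F_r(\cdot,t)\|_{L^\sigma(r_0,R)}+\|F_{rr}(\cdot,t)\|_{L^\sigma(r_0,R)}$ uniform in $t$, depending only on $\sigma,r_0,R$ and the above bounds on $F$ and $G$. Rewriting $v_r=r^{-(n-1)}\big(F_{rr}-\tfrac{n-1}{r}F_r\big)$ and using $r\geqslant r_0$ then gives \eqref{vr semilinear estimate} for all $t$ bounded away from $0$; the small-time range is covered directly by Proposition~\ref{0.0-1}, since $v\in C([0,T_{\max});W^{1,q}(\Omega))$ for every finite $q$ makes $\|v_r(\cdot,t)\|_{L^\sigma(r_0,R)}$ bounded on any initial interval $[0,t_1]$. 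The main obstacle, on which the whole argument hinges, is thus this passage to $F$: it is what converts the merely $L^1$ consumption term into a bounded forcing and thereby unlocks $L^\sigma$ regularity of $v_r$ for arbitrarily large $\sigma$, something that is false for $\nabla v$ on all of $\Omega$, as the cap $a<\tfrac{n}{n-1}$ in \eqref{nablavs} reflects.
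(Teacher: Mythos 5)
Your derivation of \eqref{nablavs} is exactly the paper's argument: Duhamel for $v-M$ with the Dirichlet heat semigroup, $\|u(s)v(s)\|_{L^1(\Omega)}\leqslant\|v_0\|_{L^\infty(\Omega)}\|u_0\|_{L^1(\Omega)}$, and integrability of the kernel precisely because $a<\tfrac{n}{n-1}$. No issues there.

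For \eqref{vr semilinear estimate}, however, your route through the primitive $F(r,t)=\int_0^r\rho^{n-1}v\,d\rho$ has a genuine gap at its final step. The reformulation itself is correct and appealing (it converts the $L^1$ forcing $uv$ into the bounded forcing $G$), but the conclusion ``interior plus boundary parabolic $L^\sigma$ regularity yields a bound for $\|F_{rr}(\cdot,t)\|_{L^\sigma(r_0,R)}$ \emph{uniform in $t$}'' does not follow from the tools you invoke. Parabolic Calder\'on--Zygmund ($W^{2,1}_\sigma$) estimates with $G\in L^\infty$ control only the \emph{space-time} norm $\int_{t}^{t+1}\!\int_{r_0}^{R}|F_{rr}|^\sigma$, not $\sup_t\|F_{rr}(\cdot,t)\|_{L^\sigma}$; and the semigroup alternative fails here because your problem sits exactly at the critical scaling: a second spatial derivative of $e^{t\partial_r^2}$ acting on data that is merely bounded (or in $L^p$ with $p\geqslant\sigma$) costs $t^{-1}$ in $L^\sigma$, whose time integral diverges logarithmically. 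Since $v_r=r^{-(n-1)}\bigl(F_{rr}-\tfrac{n-1}{r}F_r\bigr)$ genuinely requires $F_{rr}$, the step cannot be completed as stated. The paper sidesteps this by staying one derivative lower: it multiplies $v-M$ by a radial cutoff $\chi$ supported in $[r_0/2,R]$, observes that $\chi(v-M)$ solves a one-dimensional heat equation whose forcing $b=(\tfrac{n-1}{r}\chi-2\chi_r)v_r-\chi_{rr}(v-M)-\chi uv$ lies in $L^\infty((0,T_{\max});L^1(r_0/2,R))$ (the $v_r$ terms being controlled via \eqref{nablavs} with $a=1$), and then applies the one-dimensional Dirichlet semigroup gradient estimate from $L^1$ to $L^\sigma$, whose rate $(t-s)^{-1+\frac{1}{2\sigma}}$ is integrable for every finite $\sigma$. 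You would need to recast your argument in such a ``first derivative from $L^1$ data'' form (or otherwise upgrade the time regularity of $G$) to close the proof.
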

\begin{proof}
For $1 \leqslant {p_2} \leqslant p_1 \leqslant \infty$, due to the known smoothing properties of the Dirichlet heat semigroup $\left(e^{t \Delta}\right)_{t \geqslant 0}$ on $\Omega$  ( \cite{1970-TMMO-EidelmanIvasishen} \cite[section 48.2]{2019-QuittnerSouplet}), we can find positive constants $\lambda$, $C_{4}$ and $C_{5}$ such that
\begin{align}\label{Delta-u_0}
\left\|\nabla e^{t \Delta} \varphi\right\|_{L^{p_1}(\Omega)} 
\leqslant C_4\|\varphi\|_{W^{1, \infty}(\Omega)} ,
\quad  \varphi \in W^{1, \infty}(\Omega) \text{ and } \varphi =0 \text{ on } \partial \Omega
\end{align}
and
\begin{align}\label{Delta-u}
\left\|\nabla e^{t \Delta} \varphi\right\|_{L^{p_1}(\Omega)} 
\leqslant C_5 \cdot\left(1+t^{-\frac{1}{2}-\frac{n}{2}(\frac{1}{{p_2}}-\frac{1}{p_1})}\right) e^{-\lambda t}\|\varphi\|_{L^{p_2}(\Omega)}, 
\quad  \varphi \in C(\bar{\Omega}) \text{ and } \varphi =0 \text{ on } \partial \Omega.
\end{align}
For $a \in [1,\frac{n}{n-1})$, by means of the Duhamel representation of $v$, along with (\ref{Delta-u_0}) and (\ref{Delta-u}), we deduce that 
\begin{align*}
\left\|\nabla v(\cdot, t)\right\|_{L^a(\Omega)} 
= &\|\nabla\left(v(\cdot, t)-M\right)\|_{L^a(\Omega)} \notag \\
= &\left\|\nabla e^{t \Delta}\left(v_0-M\right)-\int_0^t \nabla e^{(t-s) \Delta}\Big(u(\cdot, s) v(\cdot, s)\Big) \mathrm{d} s\right\|_{L^a(\Omega)} \notag \\
\leqslant & C_4\left\|v_0-M\right\|_{W^{1, \infty}(\Omega)} \notag \\
& +C_5 \|v_0\|_{L^{\infty}(\Omega)}  \|u_0\|_{L^{1}(\Omega)} \int_0^t\left(1+(t-s)^{-\frac{1}{2}-\frac{n}{2}+\frac{n}{2a}}\right) e^{-\lambda(t-s)} \mathrm{d} s \notag \\
\leqslant & C_4\left\|v_0-M\right\|_{W^{1, \infty}(\Omega)} \notag \\
&+C_5 \|v_0\|_{L^{\infty}(\Omega)} \|u_0\|_{L^{1}(\Omega)} \int_0^{\infty}\left(1+t^{-\frac{1}{2}-\frac{n}{2}+\frac{n}{2a}}\right) e^{-\lambda t} \mathrm{d} t,
\quad t \in (0,T_{\max}),
\end{align*}
which implies (\ref{nablavs}).

To prepare for deriving local estimates, we choose a cut off function $\chi(r) \in C^{\infty}([0,R])$ such that $0 \leqslant \chi \leqslant 1$ fulfilling $\chi \equiv 0$ in $\left[0, \frac{{r_0}}{2}\right]$, and $\chi \equiv 1$ in $[{r_0}, R]$. According to the second equation in (\ref{1.0.0}) with $\tau=1$, we have
\begin{align}\label{v}
\big(\chi(r) \left(v(r,t)-M\right)\big)_t=\big(\chi(r)\left(v(r,t)-M\right)\big)_{r r}+b(r, t),
 \quad (r,t) \in(0, R) \times \left(0, T_{\max}\right),
\end{align}
where
\begin{align}\label{b}
b(r, t) = & \left(\frac{n-1}{r} \chi(r)-2 \chi_r(r)\right) v_{r}(r, t)-\chi_{r r}(r) \big(v(r, t)-M\big) \notag \\
& -\chi(r)  u(r, t) v(r, t),
 \quad (r,t) \in(0, R) \times \left(0, T_{\max }\right) .
\end{align}
By means of (\ref{mass-1}) and (\ref{nablavs}), we can deduce that 
\begin{align}\label{b-1}
b(r,t)\in L^{\infty}\left((0,T_{\max});L^1\big(\frac{{r_0}}{2},R\big)\right).
\end{align}
For all ${\sigma} \in (1, \infty)$, We then again employ the standard smoothing estimates for the Dirichlet heat semigroup $\left(e^{t \Delta}\right)_{t \geqslant 0}$ on $\Omega$ in one dimension as in Lemma~\ref{heat semigroup}, and use (\ref{v}), (\ref{b}) and (\ref{b-1}), to fix $C_3=C_3({r_0},{\sigma})>0$ such that
\begin{align}\label{vr semilinear estimate-1}
& \left\| v_r\right\|_{L^{\sigma}\left(({r_0}, R)\right)} 
 \leqslant \left\|\partial_r\big(\chi \cdot\left(v(\cdot, t)-M\right)\big)\right\|_{L^{\sigma}\left((\frac{{r_0}}{2}, R)\right)} \notag \\ 
= &\left\|\partial_r e^{t \Delta}\big(\chi \cdot\left(v_0-M\right)\big)+\int_0^t \partial_r e^{(t-s) \Delta} b(\cdot, s) \mathrm{d} s\right\|_{L^{\sigma}\left((\frac{{r_0}}{2}, R)\right)} \notag \\
\leqslant & C_4\left\|\chi \cdot\left(v_0-M\right)\right\|_{W^{1, \infty}\left((\frac{{r_0}}{2}, R)\right)} \notag \\
           & +C_5 \int_0^t\left(1+(t-s)^{-1+\frac{1}{2{\sigma}}}\right) e^{-\lambda(t-s)}\left\|b(\cdot, s)\right\|_{L^1\left((\frac{{r_0}}{2}, R)\right)} \mathrm{d} s \notag \\
\leqslant & C_3,
  \quad t \in (0,T_{\max}),
\end{align}
which implies (\ref{vr semilinear estimate}).
\end{proof}

The following Lemma presents an estimate for the integral of $u(R,t)$ with respect to $t$.
\begin{lem}\label{uR-time-all}
Let $(u,v)$ be the classical solution of problem $(\ref{1.0.0})$ with $\tau=1$. Suppose that $f(\xi)$ satisfies $(\ref{f-lip})$ and $(\ref{f-global-1})$ with $\alpha \in (\frac{n-2}{2n},\frac{1}{2})$ and $K_{1}>0$. Then, for any radially symmetric $u_0$ and $v_0$ satisfy $(\ref{u_0})$ and $(\ref{v 0})$ respectively, there exists a constant $C_6=C_6(\alpha,R,K_{1})$ such that 
\begin{align}\label{uR}
\int_t^{t+h} u(R, \sigma) \mathrm{d} \sigma \leqslant C_6,
\quad t \in (0,T_{\max}),
\end{align}
where $h=\min \left\{1, \frac{1}{2} T_{\max}\right\}$.
\end{lem}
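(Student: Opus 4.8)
The plan is to convert the boundary value $u(R,t)$ into bulk integrals that are controlled by the conserved mass and by the gradient estimates of Lemma~\ref{heat semigroup}. In radial variables the first equation of (\ref{1.0.0}) reads $r^{n-1}u_t=\partial_r\big(r^{n-1}J\big)$ with radial flux $J:=u_r+u\,f(v_r^2)v_r$, and the no-flux condition forces $J(R,t)=0$. Fixing $r_0\in(0,R)$ and a cut-off $\psi\in C^\infty([0,R])$ with $\psi\equiv0$ on $[0,r_0]$ and $\psi_r(R)>0$, I would multiply by $\psi$ and integrate over $(0,R)$: one integration by parts annihilates every flux boundary contribution (using $J(R)=0$ and $r^{n-1}\to0$ at the origin), and a further integration by parts of the diffusive part extracts the boundary trace, giving
\[
R^{n-1}\psi_r(R)\,u(R,t)=-\frac{d}{dt}\int_{r_0}^{R} r^{n-1}u\psi\,dr+\int_{r_0}^{R} u\,\partial_r\!\big(r^{n-1}\psi_r\big)\,dr-\int_{r_0}^{R} r^{n-1}u\,f(v_r^2)v_r\,\psi_r\,dr .
\]

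I would then integrate this identity over $(t,t+h)$. Since $\int_0^R r^{n-1}u\,dr$ is constant by (\ref{mass-1}), the total-derivative term telescopes into a bounded difference controlled by $\|\psi\|_{L^\infty}\int_0^R r^{n-1}u_0\,dr$, while the zeroth-order term is bounded by $h\,\sup_{[r_0,R]}|\partial_r(r^{n-1}\psi_r)|\int_0^R r^{n-1}u_0\,dr$, since $r^{n-1}\geqslant r_0^{n-1}$ there. As $h\le1$, both are bounded by constants depending only on $r_0,R$ and $\int_\Omega u_0$. Everything thus reduces to estimating the time-integrated transport term $\int_t^{t+h}\int_{r_0}^{R} r^{n-1}u\,f(v_r^2)v_r\,\psi_r\,dr\,d\sigma$, which I expect to be the main obstacle.

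For this term I would first use $\alpha<\tfrac12$ together with (\ref{f-global-1}) to obtain the sublinear bound $f(v_r^2)|v_r|\le K_1(1+v_r^2)^{\frac12-\alpha}\le C\big(1+|v_r|^{1-2\alpha}\big)$, whose only dangerous factor is a power of $|v_r|$ with exponent in $(0,1)$; the contribution of the constant $1$ is again bounded by the mass times $h$. For the genuine part $\int_t^{t+h}\int_{r_0}^{R} r^{n-1}u\,|v_r|^{1-2\alpha}$ I would invoke the near-boundary estimate (\ref{vr semilinear estimate}), which makes $|v_r|^{1-2\alpha}$ bounded in every $L^{p}(r_0,R)$ uniformly in $t$ (choosing $\sigma=(1-2\alpha)p'$). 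The remaining difficulty is that $u$ is a priori controlled only in $L^1$ through (\ref{mass-1}), so a plain Hölder pairing on the annulus does not close.

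The resolution exploits the radial structure on $(r_0,R)$, where the equation is uniformly parabolic and the drift lies in every $L^p$. The cleanest outcome occurs if $v$ is radially nondecreasing: then $f(v_r^2)v_r\ge0$, the transport term carries a favorable sign and may be discarded outright, after which the first two bulk terms already yield (\ref{uR}). Absent such monotonicity, I would instead control $\int_t^{t+h}\int_{r_0}^{R}u\,|v_r|^{1-2\alpha}$ by pairing the uniform-in-time $L^\sigma$ bound on $v_r$ with a time-averaged higher-integrability bound for $u$ near the boundary, obtained from the parabolic regularity of the $u$-equation on $(r_0,R)$, which upgrades the $L^1$ control just enough for Hölder's inequality to close. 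Either way the right-hand side is bounded by a constant $C_6$ depending only on $\alpha,R,K_{1}$ (and on the data through the conserved mass and $\|v_0\|_{L^\infty(\Omega)}$), which is the asserted estimate (\ref{uR}) with $h=\min\{1,\tfrac12 T_{\max}\}$.
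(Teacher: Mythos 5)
Your weak-formulation identity for the boundary trace is correct, and the treatment of the time-derivative and zeroth-order terms via mass conservation is fine. But the proof does not close at the decisive point, and you say so yourself: after reducing to $\int_t^{t+h}\int_{r_0}^{R} u\,|v_r|^{1-2\alpha}$, you have $u$ only in $L^\infty_t L^1_x$ and $v_r$ only in $L^\infty_t L^\sigma(r_0,R)$ for finite $\sigma$ (the uniform $L^\infty$ bound on $v_r$ near the boundary is Lemma~\ref{vr-all}, which in the paper is proved \emph{after} and \emph{from} the present lemma, so it is not available here). An $L^1$--$L^\sigma$ pairing with $\sigma<\infty$ does not close, and neither of your two escape routes works: the sign argument requires $v_r\geqslant 0$, which holds for the elliptic problem but is not propagated by the parabolic equation $v_t=\Delta v-uv$ under the hypothesis (\ref{v 0}) (the commutator term $-u_r v$ in the equation for $v_r$ has no sign); and the ``time-averaged higher integrability of $u$ near the boundary from parabolic regularity'' is precisely the hard estimate, which you assert rather than prove --- in the paper the bound $\int_t^{t+h}\int_{\Omega\setminus B_{R/2}}u^{2+q}$ (see (\ref{u2+p1})) is itself \emph{deduced from} the space-time gradient estimate (\ref{uR-time}) established in the proof of this very lemma, so invoking it here is circular.

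The missing idea is the sublinear testing procedure: the paper tests the $u$-equation with $\zeta^2u^{q-1}$ for $q\in(0,1)$, so that the drift contribution becomes $\int\zeta^2u^{q}|\nabla v|^{2-4\alpha}$, which by H\"older with exponents $\frac1q,\frac1{1-q}$ is bounded by $\big(\int_\Omega u\big)^{q}$ times a local $L^{\frac{2-4\alpha}{1-q}}$ norm of $v_r$ --- both controlled by (\ref{mass-1}) and (\ref{vr semilinear estimate}). This yields the space-time bound $\int_t^{t+h}\int_{R/2}^{R}|(u^{q/2})_r|^2\,dr\,ds\leqslant C$, and the boundary value is then recovered through the one-dimensional Gagliardo--Nirenberg inequality, $u(R,s)\leqslant \|u^{1/3}\|_{L^\infty(R/2,R)}^{3}$ with $q=2/3$, giving (\ref{uR}) after integration in time. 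Without some device of this kind that exploits only a power $u^{q}$ with $q<1$ against the taxis gradient, your linear-in-$u$ identity cannot be closed with the a priori information at hand.
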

\begin{proof}
We choose $\zeta \in C^{\infty}(\bar{\Omega})$ such that $0 \leqslant \zeta \leqslant 1$ fulfilling $\zeta \equiv 0$ in $\bar{B}_{\frac{R}{4}}(0)$ and $\zeta \equiv 1$ in $\bar{\Omega} \backslash \bar{B}_{\frac{R}{2}(0)}$. 
Testing the first equation in (\ref{1.0.0}) by $\zeta^2u^{{q}-1}$ $({q} \in (0,1))$ yields that
\begin{align}\label{zeta2up}
  \frac{1}{{q}} \frac{\mathrm{d}}{\mathrm{d} t} \int_{\Omega} \zeta^2 u^{q} \mathrm{d}x
  = & \int_{\Omega} \zeta^2 u^{{q}-1} (\Delta u + \nabla \cdot(uf(| \nabla v|^2)\nabla v) \mathrm{d}x \notag \\
= & (1-{q}) \int_{\Omega} \zeta^2 u^{{q}-2}\left|\nabla u\right|^2 \mathrm{d}x
    +(1-{q}) \int_{\Omega} \zeta^2 u^{{q}-1}f(| \nabla v|^2) \nabla u \cdot \nabla v \mathrm{d}x \notag \\
& -2 \int_{\Omega} \zeta u^{{q}-1} \nabla u \cdot \nabla \zeta \mathrm{d}x
    -2 \int_{\Omega} \zeta u^{q} f(| \nabla v|^2) \nabla v \cdot \nabla \zeta \mathrm{d}x.
\end{align}
for all $t \in (0,T_{\max})$. 

Next, we show that the first term on the right-hand side of (\ref{zeta2up}) is spatio-temporally integrable. Applying Young's inequality and (\ref{f-global-1}), we have 
\begin{align}\label{nablau nablav}
& \left| (1-{q}) \int_{\Omega} \zeta^2 u^{{q}-1}f(| \nabla v|^2) \nabla u \cdot \nabla v \mathrm{d}x\right| \notag \\
\leqslant &  \frac{(1-{q})}{4} \int_{\Omega} \zeta^2 u^{{q}-2}\left|\nabla u\right|^2 \mathrm{d}x 
          + (1-q)K^2_{1} \int_{\Omega} {\zeta}^2 \frac{u^{q}}{(1+| \nabla v|^2)^{2\alpha}} |\nabla v|^2 \mathrm{d}x \notag \\
\leqslant &  \frac{(1-{q})}{4} \int_{\Omega} \zeta^2 u^{{q}-2}\left|\nabla u\right|^2 \mathrm{d}x 
          + (1-q)K^2_{1} \int_{\Omega} {\zeta}^2 u^{q} |\nabla v|^{2-4\alpha}  \mathrm{d}x
\end{align}
and
\begin{align}\label{nablau nablazeta-1}
\left| -2 \int_{\Omega} \zeta u^{{q}-1} \nabla u \cdot \nabla \zeta \mathrm{d}x \right|
\leqslant \frac{1-{q}}{4}  \int_{\Omega} \zeta^2 u^{{q}-2}\left|\nabla u\right|^2 \mathrm{d}x 
           + \frac{4}{(1-{q})} \int_{\Omega} u^{q} |\nabla \zeta|^2  \mathrm{d}x ,
\end{align}
as well as
\begin{align}\label{nablav nablazeta}
\left| -2 \int_{\Omega} \zeta u^{q} f(| \nabla v|^2) \nabla v \cdot \nabla \zeta \mathrm{d}x \right|
 \leqslant & K^2_{1}\int_{\Omega} {\zeta}^2 \frac{u^{q}}{(1+| \nabla v|^2)^{2\alpha}} |\nabla v|^2 \mathrm{d}x
            + \int_{\Omega} u^{q} |\nabla \zeta|^2  \mathrm{d}x \notag \\
 \leqslant & K^2_{1}\int_{\Omega} {\zeta}^2 u^{q} |\nabla v|^{2-4\alpha} \mathrm{d}x
            + \int_{\Omega} u^{q} |\nabla \zeta|^2  \mathrm{d}x .
\end{align}
For the same terms on the right side of (\ref{nablau nablav})-(\ref{nablav nablazeta}), we use Hölder's inequality with $0<{q}<1$ to obtain
\begin{align}\label{up nabla zeta2}
\int_{\Omega} u^{q} |\nabla \zeta|^2  \mathrm{d}x
  \leqslant  \Big(\int_{\Omega} u\Big)^{q} 
             \cdot \Big(\int_{\Omega}|\nabla \zeta|^{\frac{2}{1-{q}}} \mathrm{d}x \Big)^{1-{q}}
\end{align}
and
\begin{align}\label{zeta2 up nablav 2-4alpha}
\int_{\Omega} {\zeta}^2 u^{q} |\nabla v|^{2-4\alpha} \mathrm{d}x
  \leqslant & \Big(\int_{\Omega} u\Big)^{q} 
             \cdot \Big(\int_{\Omega}{\zeta}^{\frac{2}{1-{q}}} |\nabla v|^{\frac{2-4\alpha}{1-{q}}} \mathrm{d}x \Big)^{1-{q}} \notag \\
  \leqslant & \Big(\int_{\Omega} u\Big)^{q} 
             \cdot (\omega_n R^{n-1})^{1-q} \Big(\int_{\frac{R}{4}}^R | v_r|^{\frac{2-4\alpha}{1-{q}}} \mathrm{d}r \Big)^{1-{q}}.
\end{align}
Summing up (\ref{zeta2up})-(\ref{zeta2 up nablav 2-4alpha}), there exists a constant $C_7=C_7(q,\alpha,R,K_{1})>0$ such that

\begin{align}\label{1}
 \frac{1-{q}}{2}\int_{\Omega} \zeta^2 u^{{q}-2}\left|\nabla u\right|^2 \mathrm{d}x
    \leqslant &  \frac{1}{{q}} \frac{\mathrm{d}}{\mathrm{d} t} \int_{\Omega} \zeta^2 u^{q} \mathrm{d}x
               +(\frac{4}{(1-{q})}+1)  \int_{\Omega} u^{q} |\nabla \zeta|^2  \mathrm{d}x \notag \\
              &+ (2-q)K^2_{1} \int_{\Omega} {\zeta}^2 u^{q} |\nabla v|^{2-4\alpha} \mathrm{d}x \notag \\
    \leqslant &  \frac{1}{{q}} \frac{\mathrm{d}}{\mathrm{d}t} \int_{\Omega} \zeta^2 u^{q} \mathrm{d}x +C_7,
      \quad t \in (0,T_{\max})
\end{align} 
with the last inequality following from (\ref{vr semilinear estimate}) with $r_0=\frac{R}{4}$ and $\sigma=\frac{2-4\alpha}{1-{q}}$. We integrate both sides of (\ref{1}) and use (\ref{mass-1}) to find positive constants $C_8=C_8(R)$ and $C_9=C_9(q,\alpha,R,K_{1})$ such that
\begin{align}\label{uR-time}
& \int_t^{t+h} \int_{\frac{R}{2}}^R\left|\Big(u^{\frac{{q}}{2}}\Big)_r \right|^2 \mathrm{d}r \mathrm{d}s \notag \\
  \leqslant & C_8 \int_{t}^{t+h}\int_{\Omega} \zeta^2 u^{{q}-2}\left|\nabla u\right|^2 \mathrm{d}x \mathrm{d}s \notag \\
  \leqslant &  \frac{2C_8}{{q}(1-{q})} \int_{\Omega} \zeta^2 u^{q} \left(\cdot, t+h\right) \mathrm{d}x
             -\frac{2C_8}{{q}(1-{q})} \int_{\Omega} \zeta^2 u^{q} \left(\cdot, t\right) \mathrm{d}x  +C_7h \notag \\
  \leqslant &   \frac{2C_8|\Omega|^{1-{q}}}{{q}(1-{q})} \Big(\int_{\Omega} u\left(\cdot, t+h\right) \mathrm{d}x\Big)^{q} +C_7 \notag \\
  \leqslant & C_9.
\end{align}
Applying Gagliardo-Nirenberg inequality, and using (\ref{mass-1}) and (\ref{uR-time}) with $q=\frac{2}{3}$, we can find a positive constant $C_{10}=C_{10}(R)$ such that
\begin{align*}
\int_t^{t+h} u(R, s) \mathrm{d} s
  \leqslant & \int_t^{t+h} u^{\frac{5}{3}}(R, s) \mathrm{d} s  +h \notag \\
  \leqslant & \int_t^{t+h} \|u^{\frac{1}{3}}\|_{L^{\infty}\left((\frac{R}{2},R)\right)}^{5} \mathrm{d} s +1 \notag \\
  \leqslant & C_{10}\int_t^{t+h} \|(u^{\frac{1}{3}})_r\|_{L^2\left((\frac{R}{2},R)\right)}^2  \mathrm{d} s
              + C_{10}
\end{align*}
for all $t\in [0,T_{\max}-h)$ with $h=\min \left\{1, \frac{1}{2} T_{\max}\right\}$, which implies (\ref{uR}).
\end{proof}
The following lemma provides a spatio-temporal uniform bound for $v_r$ near the boundary.
\begin{lem}\label{vr-all}
Let $(u,v)$ be the classical solution of problem $(\ref{1.0.0})$ with $\tau=1$. Suppose that $f(\xi)$ satisfies $(\ref{f-lip})$ and $(\ref{f-global-1})$ with $\alpha \in (\frac{n-2}{2n},\frac{1}{2})$ and $K_{1}>0$. Then, for any radially symmetric $u_0$ and $v_0$ satisfy $(\ref{u_0})$ and $(\ref{v 0})$ respectively, there exists a constant $C_{11}=C_{11}(\alpha,R,K_{1})$ such that 
\begin{align}\label{vr}
\left|v_r(r, t)\right| \leqslant C_{11},
 \quad (r,t) \in \big(\frac{4R}{5},R\big) \times \left(0, T_{\max }\right).
\end{align}
\end{lem}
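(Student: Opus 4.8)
The plan is to prove (\ref{vr}) by establishing a uniform‑in‑time bound for a localized superlevel energy of $v_r$ near the boundary and then upgrading it to the pointwise estimate through a one–dimensional embedding. Fix a cut‑off $\psi\in C^\infty([0,R])$ with $0\le\psi\le1$, $\psi\equiv0$ on $[0,\tfrac{R}{4}]$ and $\psi\equiv1$ on $[\tfrac{4R}{5},R]$, and for integers $p\ge1$ consider $y_p(t):=\int_0^R\psi(r)\,r^{n-1}|v_r(r,t)|^{2p}\,dr$. Differentiating the radial identity $v_t=v_{rr}+\tfrac{n-1}{r}v_r-uv$ in $r$ shows that $w:=v_r$ satisfies, on the annulus $(\tfrac{R}{4},R)$ where the equation is uniformly parabolic, $w_t=w_{rr}+\tfrac{n-1}{r}w_r-\tfrac{n-1}{r^2}w-(uv)_r$. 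I would test this equation against $2p\,\psi r^{n-1}|w|^{2p-2}w$ and integrate by parts to obtain an identity for $y_p'(t)$.

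The decisive point is the boundary contribution at $r=R$. Since $v\equiv M$ on $\partial\Omega$ for all times, $v_t(R,t)=0$, so the equation forces $v_{rr}(R,t)=u(R,t)M-\tfrac{n-1}{R}v_r(R,t)$. Hence the boundary term produced by the diffusion part, proportional to $|v_r(R,t)|^{2p-2}v_r(R,t)\,v_{rr}(R,t)$, carries the factor $u(R,t)M\,|v_r(R,t)|^{2p-2}v_r(R,t)$; this is exactly cancelled by the boundary term arising when the consumption source $-(uv)_r$ is integrated by parts, since there $v(R,t)=M$ as well. Thus the only genuinely singular boundary contribution disappears, leaving a good dissipation $-c_p\int\psi r^{n-1}|w|^{2p-2}w_r^2$, a favorable sign term $-2p\int\psi r^{n-1}u\,|w|^{2p}\le0$, the nonpositive boundary term $-2p(n-1)R^{n-2}|v_r(R,t)|^{2p}$ and the interior term $-2p(n-1)\int\psi r^{n-3}|w|^{2p}\le0$, together with lower–order contributions supported in $[\tfrac{R}{4},\tfrac{4R}{5}]$ (coming from the derivatives of $\psi$) and bulk terms carrying powers of $u$.

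It then remains to control the $u$‑dependent bulk terms. The contributions supported away from $r=R$ are harmless because there $|\psi_r|$ is bounded, $\int_\Omega u=\int_\Omega u_0$ is conserved by (\ref{mass-1}), and $\|v_r\|_{L^\sigma(\frac{R}{4},R)}\le C_3$ for every $\sigma\in(1,\infty)$ by Lemma~\ref{heat semigroup}. For the surviving cross term $\sim\int\psi r^{n-1}uv\,|w|^{2p-2}w_r$ I would split off half of the dissipation by Young's inequality and estimate the residue $\int\psi u^2|w|^{2p-2}$ by Hölder's inequality, again absorbing the high powers of $w$ via the $L^\sigma$‑bounds (\ref{vr semilinear estimate}) and interpolating $u$ between its conserved mass and the time–integrated near–boundary control furnished by Lemma~\ref{uR-time-all}, in particular the estimate (\ref{uR-time}) for $\int_t^{t+h}\int_{R/2}^R|(u^{q/2})_r|^2$ together with $\int_\Omega\zeta^2u^q\le|\Omega|^{1-q}(\int_\Omega u_0)^q$. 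In this way one reaches a differential inequality $y_p'(t)+a\,y_p(t)\le g_p(t)$ whose forcing obeys a uniform average bound $\tfrac1h\int_t^{t+h}g_p\le b_p$, and Lemma~\ref{lem-3.1.0.2} converts this into a uniform‑in‑time bound $y_p(t)\le C(p)$ on $(0,T_{\max})$.

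Finally, to upgrade the $L^{2p}$ information to the pointwise bound (\ref{vr}) I would run a Moser‑type iteration in $p$: keeping track of the dependence of $a$, $b_p$ and hence $C(p)$ on $p$, one shows that $\|v_r(\cdot,t)\|_{L^{2p}(\frac{4R}{5},R)}\le (C(p))^{1/2p}$ stays bounded as $p\to\infty$ uniformly in $t$, which yields $|v_r(r,t)|\le C_{11}$ on $(\tfrac{4R}{5},R)$; alternatively, the uniform bounds on $\||v_r|^{p}\|_{L^2}=\|v_r\|_{L^{2p}}^{p}$ at a single large $p$ combined with the accompanying control of $\|(|v_r|^{p})_r\|_{L^2}$ and the one–dimensional Gagliardo–Nirenberg inequality $\|g\|_{L^\infty(I)}\le C\|g_r\|_{L^2(I)}^{1/2}\|g\|_{L^2(I)}^{1/2}+C\|g\|_{L^2(I)}$ applied to $g=|v_r|^{p}$ achieve the same conclusion. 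The main difficulty throughout is precisely the consumption term $uv$ near the Dirichlet boundary, where no pointwise control of $u$ is available: the argument succeeds only because the exact boundary cancellation removes the non‑integrable boundary contribution, and because the absent pointwise bound on $u$ can be replaced by the conserved mass and the time‑averaged near–boundary estimates of Lemma~\ref{uR-time-all}, reconciled with the desired pointwise‑in‑time statement through Lemma~\ref{lem-3.1.0.2}.
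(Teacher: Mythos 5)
Your boundary computation is correct: using $v_t(R,t)=0$ to write $v_{rr}(R,t)=u(R,t)M-\frac{n-1}{R}v_r(R,t)$ and observing that the resulting term $u(R,t)M|v_r(R,t)|^{2p-2}v_r(R,t)$ is exactly cancelled by the boundary contribution from integrating $-(uv)_r$ by parts is a genuine and correct observation, and the differential inequality for $y_p$ can indeed be closed for each \emph{fixed} $p$ with the ingredients you cite. The problem is that this lands you exactly where Lemma~\ref{heat semigroup} already puts you: estimate (\ref{vr semilinear estimate}) gives $\|v_r(\cdot,t)\|_{L^\sigma(r_0,R)}\le C_3(\sigma,r_0)$ for every finite $\sigma$, uniformly in $t$, so a bound $y_p(t)\le C(p)$ for each fixed $p$ is not new information. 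All of the content of (\ref{vr}) lies in the passage to $p=\infty$, and neither of your two suggested endgames closes. For the Moser iteration you must track the $p$-dependence of the forcing bound $b_p$; estimating the residue $\int\psi\, u^2v^2|w|^{2p-2}$ by H\"older against the $L^\sigma$ bounds uses $\|v_r\|_{L^\sigma}\le C_3(\sigma)$ with $C_3(\sigma)$ growing at least linearly in $\sigma$ (the semigroup integral $\int_0^\infty(1+t^{-1+\frac{1}{2\sigma}})e^{-\lambda t}\,dt$ is of order $\sigma$), which makes $(b_p)^{1/(2p)}$ diverge as $p\to\infty$; closing the iteration would instead require interpolating the high power of $w$ against the dissipation and the previous level $y_{p/2}$, which you have not set up. For the Gagliardo--Nirenberg route you need $\|(|v_r|^p)_r(\cdot,t)\|_{L^2}$ at \emph{every} time $t$, but your energy inequality controls the dissipation only after integration in time, so it yields the pointwise bound only at some times $t^*$ in each window, not on all of $(0,T_{\max})$.

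The paper avoids this entirely by a two-step argument in the opposite order: it first upgrades the space-time integrability of $u$ near the boundary to a uniform-in-time bound $\sup_t\int_{3R/4}^R u^2\le C$ (testing the $u$-equation against $\zeta^2u$, using (\ref{uR-time}) and Lemma~\ref{lem-3.1.0.2} to absorb the time-averaged forcing), and then feeds this into the Duhamel representation of the localized one-dimensional equation (\ref{v}) for $\chi\cdot(v-M)$: with the source $b(\cdot,t)$ from (\ref{b}) bounded in $L^2\left((\frac{3R}{4},R)\right)$ uniformly in $t$, the one-dimensional Dirichlet heat semigroup's $L^2\to W^{1,\infty}$ smoothing carries the integrable singularity $t^{-3/4}$, and (\ref{vr}) follows in one line. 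If you want to salvage your approach, the missing piece is precisely such a pointwise-in-time $L^2$ (or better) bound on $u$ near the boundary --- at which point the semigroup route is shorter than any iteration on $v_r$.
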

\begin{proof}
We choose $\zeta \in C^{\infty}([0, R])$ such that $0 \leqslant \zeta \leqslant 1$ fulfilling $\zeta \equiv 0$ in $\bar{B}_{\frac{R}{2}}(0)$ and $\zeta \equiv 1$ in $\bar{\Omega} \backslash B_\frac{3R}{4}(0)$. Multiplying the first equation in (\ref{1.0.0}) by $\zeta^2 u$  upon integration by parts, followed by the application of Young's inequality, we find that
\begin{align}\label{vr zeta2 up}
  &\frac{1}{2} \frac{\mathrm{d}}{\mathrm{d} t} \int_{\Omega} \zeta^2 u^2 \mathrm{d}x 
  + \int_{\Omega} \zeta^2 u^2 \mathrm{d}x \notag \\
  = & - \int_{\Omega} \zeta^2 \left|\nabla u\right|^2 \mathrm{d}x
      - \int_{\Omega} \zeta^2 uf(| \nabla v|^2) \nabla u \cdot \nabla v \mathrm{d}x \notag \\
  & -2 \int_{\Omega} \zeta u \nabla u \cdot \nabla \zeta \mathrm{d}x
    -2 \int_{\Omega} \zeta u^2 f(| \nabla v|^2) \nabla v \cdot \nabla \zeta \mathrm{d}x
    + \int_{\Omega} \zeta^2 u^2 \mathrm{d}x \notag \\
  \leqslant & \frac{3 K^2_{1}}{2} \int_{\Omega} \zeta^2 u^2 |\nabla v|^{2-4\alpha} \mathrm{d}x 
             +3 \int_{\Omega} u^2  |\nabla \zeta|^2 \mathrm{d}x 
             + \int_{\Omega} \zeta^2 u^2 \mathrm{d}x
\end{align}
for all $t \in (0,T_{\max})$. 

Next, we estimate the three terms on the right side of (\ref{vr zeta2 up}) to derive the boundedness of $\int_{\frac{3R}{4}}^R u^2 \mathrm{d}x$. For $q \in (0,1)$, by Young's inequality, and along with (\ref{vr semilinear estimate}) where $r_0=\frac{R}{2}$ and $\sigma=\frac{(2+{q})(2-4\alpha)}{{q}}>0$ because of $\frac{n-2}{2n}<\alpha<\frac{1}{2}$ and $0<q<1$, there exists a positive constant $C_{12}=C_{12}(p_1,\alpha,R)$ such that
\begin{align}\label{zeta2 up nablav2-4alpha}
\int_{\Omega} \zeta^2 u^2 |\nabla v|^{2-4\alpha} \mathrm{d}x
\leqslant & \int_{\Omega \backslash B_{\frac{R}{2}}(0)} u^{{2+{q}}} \mathrm{d}x
          +\int_{\Omega \backslash B_{\frac{R}{2}}(0)}\left|\nabla v\right|^{\frac{(2-4\alpha) (2+{q})}{{{q}}}} \mathrm{d}x \notag \\
\leqslant & \int_{\Omega \backslash B_{\frac{R}{2}}(0)} u^{{2+{q}}} \mathrm{d}x +C_{12}.
\end{align}
By means of Young's inequality, we have
\begin{align}
\int_{\Omega} u^2  |\nabla \zeta|^2 \mathrm{d}x
  \leqslant \int_{\Omega \backslash B_{\frac{R}{2}}(0)} u^{2+{q}} \mathrm{d}x
            + \int_{\Omega \backslash B_{\frac{R}{2}}(0)} \left|\nabla \zeta \right|^{\frac{2(2+{q})}{{{q}}}} \mathrm{d}x
\end{align}
and
\begin{align}\label{zeta2 up}
\int_{\Omega} \zeta^2 u^2 \mathrm{d}x
  \leqslant \int_{\Omega \backslash B_{\frac{R}{2}}(0)} u^{{2+{q}}} \mathrm{d}x
            +  \int_{\Omega \backslash B_{\frac{R}{2}}(0)} \zeta^{\frac{2(2+{q})}{{{q}}}} \mathrm{d}x.
\end{align}
The Gagliardo-Nirenberg inequality guarantees the existence of a positive constant $C_{13}=C_{13}(q,R)$ such that
\begin{align}\label{u}
\int_{\Omega \backslash B_{\frac{R}{2}}(0)} u^{{2+{q}}} \mathrm{d}x
  \leqslant &  \omega_n R^{n-1}\int_{\frac{R}{2}}^{R} u^{{2+{q}}}(r,t) \mathrm{d}r
  =\omega_n R^{n-1} \|u^{\frac{q}{2}}\|_{L^{\frac{2(2+p_1)}{q}}\left((\frac{R}{2},R)\right)}^{\frac{2(2+p_1)}{q}} \notag \\
  \leqslant & C_{13}\|(u^{\frac{q}{2}})_r\|_{L^2\left((\frac{R}{2},R)\right)}^2  \|u^{\frac{q}{2}}\|_{L^{\frac{2}{q}}\left((\frac{R}{2},R)\right)}^{\frac{4}{q}} 
             + C_{13}\|u^{\frac{q}{2}}\|_{L^{\frac{2}{q}}\left((\frac{R}{2},R)\right)}^{\frac{2(2+p_1)}{q}}.
\end{align}
Besides, we have
\begin{align*}
\|u^{\frac{q}{2}}\|_{L^{\frac{2}{q}}(\frac{R}{2},R)}^{\frac{2}{q}} 
  = \int_{\frac{R}{2}}^{R} u(r,t) \mathrm{d}r
  \leqslant \Big(\frac{2}{R}\Big)^{n-1} \int_{\frac{R}{2}}^{R} r^{n-1}u(r,t) \mathrm{d}r 
  = \Big(\frac{2}{R}\Big)^{n-1}\frac{1}{\omega_n} \int_{\Omega} u_0 \mathrm{d}x.
\end{align*}
 Combining this with (\ref{uR-time}) and (\ref{u}), there exists a constant $C_{14}=C_{14}(p_1,R)>0$ such that
\begin{align}\label{u2+p1}
\int_t^{t+h}\int_{\Omega \backslash B_{\frac{R}{2}}(0)} u^{{2+{q}}} \mathrm{d}x \mathrm{d}s \leqslant C_{14},
\quad t \in [0,T_{\max}-h).
\end{align}
Substituting (\ref{zeta2 up nablav2-4alpha})-(\ref{zeta2 up}) into (\ref{vr zeta2 up}) implies the existence of a positive constant $C_{15}=C_{15}(q,\alpha,R)$ such that
\begin{align}\label{zeta2 up-ineq}
&\frac{1}{2} \frac{\mathrm{d}}{\mathrm{d} t} \int_{\Omega} \zeta^2 u^2 \mathrm{d}x 
+ \int_{\Omega} \zeta^2 u^2 \mathrm{d}x \notag \\
 \leqslant & \left(\frac{3K^2_{1}}{2}+3\right)\int_{\Omega \backslash B_{\frac{R}{2}}(0)} u^{{2+{q}}} \mathrm{d}x +C_{15} \notag \\
 \overset{\Delta}{=} & g(t) ,
 \quad t \in (0,T_{\max}).
\end{align}
Combining (\ref{u2+p1}) with (\ref{zeta2 up-ineq}), and applying Lemma~\ref{lem-3.1.0.2} to (\ref{zeta2 up-ineq}), we have
\begin{align}\label{u2-R}
\int_{\frac{3R}{4}}^R u^2 \mathrm{d}x 
  \leqslant \Big(\frac{4}{3R}\Big)^{n-1}\frac{1}{\omega_n}\int_{\Omega}u_0^2\mathrm{d}x + \Big(\frac{4}{3R}\Big)^{n-1}\frac{1}{\omega_n}\frac{((3K^2_{1}+6)C_{14}+2C_{15}}{1-e^{-2}}
\end{align}
for all $t \in (0,T_{\max})$. 

We fix $\chi \in C^{\infty}([0, R])$ such that $0 \leqslant \chi \leqslant 1$ fulfilling $\chi \equiv 0$ in $\left[0, \frac{3R}{4}\right]$ and $\chi \equiv 1$ in $[\frac{4R}{5}, R]$. According to (\ref{b}), (\ref{u2-R}) and (\ref{vr semilinear estimate}) with $r_0=\frac{3R}{4}$ and $\sigma=2$, there exists a constant $C_{16}=C_{16}(\alpha,R,K_{1})>0$ such that
\begin{align}\label{b-2}
\left\|b(\cdot, t)\right\|_{L^2 \left((\frac{3R}{4}, R)\right)} \leqslant C_{16},
\quad t \in (0,T_{\max}).
\end{align}
Using (\ref{b-2}), (\ref{Delta-u_0}) and (\ref{Delta-u}), we derive that
\begin{align*}
  \left\|v_r(\cdot, t)\right\|_{L^{\infty}\left((\frac{4R}{5}, R)\right)} 
  \leqslant &\left\|\partial_r\big(\chi \cdot\left(v(\cdot, t)-M\right)\big)\right\|_{L^{\infty}\left((\frac{3R}{4}, R)\right)} \notag \\
  \leqslant & C_4\left\|\chi \cdot\left(v_0-M\right)\right\|_{W^{1, \infty}\left((\frac{3R}{4}, R)\right)}+C_5C_{16} \int_0^{\infty}\left(1+t^{-\frac{3}{4}}\right) e^{-\lambda t} \mathrm{d} t
\end{align*}
for all $t \in (0,T_{\max})$ which implies (\ref{vr}).
\end{proof}
\begin{lem}\label{p-q}
Let $\Omega=B_R(0) \subset \mathbb{R}^n$ $(n \geqslant 3)$ and $(u,v)$ be the classical solution of problem $(\ref{1.0.0})$ with $\tau=1$. Suppose that $f(\xi)$ satisfies $(\ref{f-lip})$ and $(\ref{f-global-1})$ with $\alpha>\frac{n-2}{2n}$ and $K_1>0$. Assume that $(\ref{u_0})$ and $(\ref{v 0})$ are valid. Then for $q > \max \left\{1, \frac{n-2}{2}\right\}$ and $p \in \left[2 q+2, \frac{(2 n-2) q}{n-2}+1\right]$, there exists a constant $C_{17}>0$ such that 
\begin{align}\label{p-n}
\|\nabla v\|_{L^p(\Omega)} 
  \leqslant C_{17}\left\||\nabla v|^{q-1}  |D^2v| \right\|_{L^2(\Omega)}^{\frac{p-2}{p q}}
            +C_{17}.
\end{align}
If $n=2$, for $q \geqslant 1$ and $p \geqslant 2 q+2$, we also have
\begin{align}\label{p-2}
\|\nabla v\|_{L^p(\Omega)} 
  \leqslant C_{17}\left\||\nabla v|^{q-1}  |D^2v| \right\|_{L^2(\Omega)}^{\frac{p-2}{p q}}
            +C_{17}.
\end{align}
\end{lem}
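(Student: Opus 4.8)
The plan is to transfer everything onto the scalar function $g:=|\nabla v|^q$, whose weak gradient obeys the pointwise bound $|\nabla g|=q|\nabla v|^{q-1}\bigl|\nabla|\nabla v|\bigr|\le q|\nabla v|^{q-1}|D^2v|$, so that squaring and integrating gives $\|\nabla g\|_{L^2(\Omega)}\le q\bigl\||\nabla v|^{q-1}|D^2v|\bigr\|_{L^2(\Omega)}=:qA$. Thus the right-hand side of the claim is, up to the constant $q$, an upper bound for $\|\nabla g\|_{L^2(\Omega)}$. Since $g$ only records the size of $\nabla v$, the missing ``degree'' must come from the boundary data, and here I would use the Dirichlet condition $v=M$ on $\partial\Omega$: writing $\int_\Omega|\nabla v|^p=\int_\Omega|\nabla v|^{p-2}\nabla v\cdot\nabla(v-M)$ and integrating by parts, the boundary integral vanishes because $v-M=0$ on $\partial\Omega$, leaving
\[
\int_\Omega|\nabla v|^p=-\int_\Omega(v-M)\,\nabla\cdot\bigl(|\nabla v|^{p-2}\nabla v\bigr)\le C_p\,\|v-M\|_{L^\infty(\Omega)}\int_\Omega|\nabla v|^{p-2}|D^2v|,
\]
where $|\nabla\cdot(|\nabla v|^{p-2}\nabla v)|\le C_p|\nabla v|^{p-2}|D^2v|$ and $\|v-M\|_{L^\infty(\Omega)}$ is bounded by $(\ref{vmax})$.

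Next I would split $|\nabla v|^{p-2}|D^2v|=|\nabla v|^{\,p-q-1}\cdot\bigl(|\nabla v|^{q-1}|D^2v|\bigr)$ and apply the Cauchy--Schwarz inequality to extract precisely $A$, obtaining $\int_\Omega|\nabla v|^p\le C_pM\,\|\nabla v\|_{L^{2(p-q-1)}(\Omega)}^{\,p-q-1}A$. The prescribed range of $p$ now becomes transparent. At the lower endpoint $p=2q+2$ one has $2(p-q-1)=p$, so $\|\nabla v\|_{L^{2(p-q-1)}}=\|\nabla v\|_{L^p}$ is the very norm under estimation and can be absorbed by Young's inequality, which at once yields $\|\nabla v\|_{L^p}\le CA^{1/(q+1)}=CA^{(p-2)/(pq)}$. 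At the upper endpoint $p=\frac{(2n-2)q}{n-2}+1$ a short computation gives $2(p-q-1)=\frac{2nq}{n-2}$, so that $\|\nabla v\|_{L^{2(p-q-1)}}=\|g\|_{L^{2n/(n-2)}}^{1/q}$ sits exactly at the Sobolev-critical exponent for $g\in W^{1,2}(\Omega)$; the condition $q>\frac{n-2}{2}$ is what keeps the interval for $p$ nondegenerate.

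For general $p$ in the interval, $2(p-q-1)$ lies between these two values, and I would control $\|\nabla v\|_{L^{2(p-q-1)}}=\|g\|_{L^{2(p-q-1)/q}}^{1/q}$ by the Gagliardo--Nirenberg/Sobolev inequality for $g$, using $\|\nabla g\|_{L^2}\le qA$ together with a bounded lower-order factor supplied by Lemma~\ref{heat semigroup} (the bound $\|\nabla v\|_{L^a}\le C_2$ for some $a\in[1,\tfrac{n}{n-1})$); substituting this into the previous display and balancing the powers of $A$ via Young's inequality delivers the estimate, the ``$+C$'' absorbing the lower-order and boundary contributions. For $n=2$ the embedding $W^{1,2}(\Omega)\hookrightarrow L^r(\Omega)$ holds for every finite $r$, which is exactly why the upper restriction on $p$ drops in $(\ref{p-2})$, and the scheme carries over verbatim. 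The main obstacle is this last step: one must handle the super-critical-looking norm $\|\nabla v\|_{L^{2(p-q-1)}}$ uniformly and track the exponents so that the powers of $A$ recombine to exactly $\frac{p-2}{pq}$; the delicate bookkeeping hinges on the two endpoints of the $p$-interval being aligned, respectively, with the self-absorption threshold $2(p-q-1)=p$ and the Sobolev-critical threshold $2(p-q-1)=\frac{2nq}{n-2}$, and on the vanishing of the Dirichlet boundary term, which lets the $L^\infty$-bound on $v$ carry the remaining degree.
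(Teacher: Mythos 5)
Your treatment of the boundary term is genuinely different from the paper's, and it is correct. The paper tests $|\nabla v|^{p}=|\nabla v|^{p-2}\nabla v\cdot\nabla v$ against $v$ itself, so the boundary integral $\int_{\partial \Omega} |\nabla v|^{p-2}v\frac{\partial v}{\partial n}$ survives and must be controlled by the uniform bound (\ref{vr}) on $v_r$ near $\partial\Omega$ from Lemma~\ref{vr-all}; you test against $v-M$, so the boundary integral vanishes by the Dirichlet condition and only (\ref{vmax}) is needed. This is cleaner and decouples the lemma from Lemma~\ref{vr-all} (and hence from the restriction $\alpha<\tfrac12$ under which (\ref{vr}) is derived). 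Moreover, at the endpoint $p=2q+2$ your self-absorption step is complete: with $A:=\||\nabla v|^{q-1}|D^2v|\|_{L^2}$ you get $\|\nabla v\|_{L^p}^{p}\leqslant C A\,\|\nabla v\|_{L^p}^{p/2}$, hence $\|\nabla v\|_{L^p}\leqslant CA^{2/p}$ with $\tfrac{2}{p}=\tfrac{p-2}{pq}$ exactly. Since the paper only ever invokes this lemma with $p=2q+2$ (see (\ref{v-lemma p-q}) and (\ref{nablav2k+2})), your argument fully covers what is used downstream, whereas the paper outsources the interpolation machinery to \cite[Lemma 3.3]{2015-ZAMP-WangXiang} and \cite[Lemma 2.1]{2015-MMMAS-LiSuenWinklerXue}.

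For $p$ strictly above $2q+2$, however, the step you yourself flag as ``the main obstacle'' is a genuine gap, not bookkeeping. Estimating $\|\nabla v\|_{L^{2(p-q-1)}}=\|g\|_{L^{2(p-q-1)/q}}^{1/q}$ by Gagliardo--Nirenberg with $\|\nabla g\|_{L^2}\leqslant qA$ and the bounded low-order norm $\|g\|_{L^{a/q}}=\|\nabla v\|_{L^a}^{q}$ produces $\|\nabla v\|_{L^p}^{p}\leqslant CA^{1+\theta(p-q-1)/q}+CA$ with an interpolation exponent $\theta$ satisfying $1+\theta(p-q-1)/q>\tfrac{p-2}{q}$ whenever $n\geqslant 3$ and $p>2q+2$ (at the upper endpoint $\theta=1$ and one only gets $\|\nabla v\|_{L^p}\leqslant CA^{\frac{p-1}{pq}}+C$); a larger power of $A$ is a strictly weaker conclusion and cannot be downgraded to (\ref{p-n}). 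This failure is essential rather than an artifact of your choice of interpolation partners: testing any inequality of the form $\|\nabla v\|_{L^p}\leqslant CA^{e}+C$ on the concentration profile $v(x)=\phi(x/\delta)$ with $\phi\in C_c^\infty(B_1)$ and $\delta\to 0$ gives $\|\nabla v\|_{L^p}\sim\delta^{\frac{n}{p}-1}$ and $A\sim\delta^{\frac{n}{2}-q-1}$, which forces $e\geqslant\frac{2(p-n)}{p(2q+2-n)}$, and this exceeds $\frac{p-2}{pq}$ precisely when $p>2q+2$. So no argument that uses only $\|v\|_{L^\infty}$ and $A$ (which is all your outline, and indeed the paper's cited route, has available) can produce the stated exponent away from the left endpoint; you should either restrict the statement to $p=2q+2$, which suffices for the application, or accept the weaker exponent your interpolation actually yields.
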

\begin{proof}
For all $\frac{n-2}{2n}<\alpha<\frac{1}{2}$, by means of (\ref{vr}) and $p>1$, we obtain
\begin{align}\label{partialv}
\int_{\partial \Omega} |\nabla v|^{p-2}v\frac{\partial v}{\partial n} \mathrm{d}x
  \leqslant |\partial \Omega| \|v_0\|_{L^{\infty}(\Omega)} \left\|v_ r\right\|_{L^{\infty}\left((\frac{4R}{5}, R) \times\left(0, T_{\max }\right)\right)}^{p-1}
  \leqslant |\partial \Omega|  \|v_0\|_{L^{\infty}(\Omega)} {C_{11}}^{p-1}
\end{align}
for all $t \in (0,T_{\max})$. Similar to Lemma 3.3 in \cite{2015-ZAMP-WangXiang}, togther with (\ref{partialv}) and (\ref{vmax}), this leads to (\ref{p-n}). Combining (\ref{partialv}) with (\ref{nablavs}) and (\ref{vmax}), and following a proof similar to that of Lemma 2.1 in \cite{2015-MMMAS-LiSuenWinklerXue}, we can deduce (\ref{p-2}).
\end{proof}
To derive a uniform estimate for $u$, we construct a following differential inequality.
\begin{lem}\label{lem-3.1.2}
Let $(u,v)$ be the classical solution of problem $(\ref{1.0.0})$ with $\tau=1$. Suppose that $f(\xi)$ satisfies $(\ref{f-lip})$ and $(\ref{f-global-1})$ with $\alpha \in (\frac{n-2}{2n},\frac{1}{2})$ and $K_{1}>0$. Assume that $u_0$ and $v_0$ are radially symmetric and satisfy $(\ref{u_0})$ and $(\ref{v 0})$ respectively. Then, for any 
\begin{align}\label{k}
k>\max\{1,\frac{n-2}{2},\frac{n}{2}-\alpha n-2\alpha\}
\end{align}
and 
\begin{align}\label{p-1}
\max\{1,k+1-\frac{2}{n}\}<p<\frac{2k+4\alpha}{n(1-2\alpha)},
\end{align}
there exist constants $C_{18}=C_{18}(\alpha,R,K_{1},k,p)>0$ and $C_{19}=C_{19}(\alpha,R,K_{1},k,p)>0$ such that 
\begin{align}\label{all-2}
 & \frac{\mathrm{d}}{\mathrm{d} t} \left(\frac{1}{p}\int_{\Omega}u^p \mathrm{d}x  +  \frac{1}{2k}\int_{\Omega} |\nabla v|^{2k}\mathrm{d}x  \right)
   + C_{18} \int_{\Omega} |\nabla v|^{2(k-1)} |D^2v|^2 \mathrm{d}x  +  C_{18}\int_{\Omega} u^{p-2} |\nabla u|^2 \mathrm{d}x  \notag \\
 \leqslant & \frac{1}{2}\int_{\partial \Omega} |\nabla v|^{2(k-1)} \frac{\partial |\nabla v|^2}{\partial n} \mathrm{d}x 
             + \int_{\partial \Omega} |\nabla v|^{2(k-1)} uv \big|\frac{\partial v}{\partial n}\big| \mathrm{d}x +C_{19}
\end{align}
for all $t \in (0,T_{\max})$.
\end{lem}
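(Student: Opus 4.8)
The plan is to establish (\ref{all-2}) by combining two functional identities---an $L^p$-testing of the $u$-equation and an evolution identity for $\int_\Omega|\nabla v|^{2k}$---and to absorb the ``mixed'' interior terms that arise into the two dissipation integrals, keeping the boundary contributions on the right. For the \emph{$L^p$-bound of $u$}, I would test the first equation in (\ref{1.0.0}) by $u^{p-1}$ and integrate by parts; the no-flux condition $(\nabla u+uf(|\nabla v|^2)\nabla v)\cdot\nu=0$ annihilates the boundary term, leaving
\[
\frac{1}{p}\frac{\mathrm{d}}{\mathrm{d}t}\int_\Omega u^p+(p-1)\int_\Omega u^{p-2}|\nabla u|^2=-(p-1)\int_\Omega u^{p-1}f(|\nabla v|^2)\,\nabla u\cdot\nabla v.
\]
Young's inequality, (\ref{f-global-1}) and the elementary bound $\frac{|\nabla v|^2}{(1+|\nabla v|^2)^{2\alpha}}\le|\nabla v|^{2-4\alpha}$ control the right-hand side by $\frac{p-1}{2}\int_\Omega u^{p-2}|\nabla u|^2+C\int_\Omega u^p|\nabla v|^{2-4\alpha}$, so that absorbing the gradient part retains half of the dissipation $\int_\Omega u^{p-2}|\nabla u|^2$ and leaves the interior term $\int_\Omega u^p|\nabla v|^{2-4\alpha}$.

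\emph{Evolution of $\int_\Omega|\nabla v|^{2k}$.} Differentiating and using $v_t=\Delta v-uv$ gives $\frac{1}{2k}\frac{\mathrm{d}}{\mathrm{d}t}\int_\Omega|\nabla v|^{2k}=\int_\Omega|\nabla v|^{2(k-1)}\nabla v\cdot\nabla(\Delta v-uv)$. On the diffusive part I would invoke the pointwise identity $\nabla v\cdot\nabla\Delta v=\tfrac12\Delta|\nabla v|^2-|D^2v|^2$ and integrate by parts, producing the boundary term $\tfrac12\int_{\partial\Omega}|\nabla v|^{2(k-1)}\frac{\partial|\nabla v|^2}{\partial n}$, the dissipation $-\int_\Omega|\nabla v|^{2(k-1)}|D^2v|^2$, and the further nonpositive term $-\tfrac{k-1}{2}\int_\Omega|\nabla v|^{2(k-2)}\big|\nabla|\nabla v|^2\big|^2$. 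On the consumption part I would integrate by parts once more, which yields a boundary contribution bounded by $\int_{\partial\Omega}|\nabla v|^{2(k-1)}uv\big|\frac{\partial v}{\partial n}\big|$ together with interior terms of the form $\int_\Omega|\nabla v|^{2k-1}|D^2v|\,u$ (using $|\Delta v|\le\sqrt n\,|D^2v|$, $\big|\nabla|\nabla v|^2\big|\le2|\nabla v|\,|D^2v|$ and $v\le\|v_0\|_{L^\infty(\Omega)}$). A further Young step then splits these into $\varepsilon\int_\Omega|\nabla v|^{2(k-1)}|D^2v|^2$, which is absorbed, and $C\int_\Omega u^2|\nabla v|^{2k}$.

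\emph{Absorption.} Adding the two estimates reproduces exactly the two boundary terms on the right of (\ref{all-2}), the two dissipation integrals on the left (with a small constant $C_{18}$ after absorption), and the two mixed terms $\int_\Omega u^p|\nabla v|^{2-4\alpha}$ and $\int_\Omega u^2|\nabla v|^{2k}$. The core of the argument is to control both of these by the dissipation plus a constant. For each, I would apply Hölder to separate the $u$-factor from the $|\nabla v|$-factor, estimate the $u$-factor via Gagliardo--Nirenberg with the conserved mass (\ref{mass-1}) serving as the lower-order norm---so that $\int_\Omega u^{p-2}|\nabla u|^2\sim\|\nabla u^{p/2}\|_{L^2(\Omega)}^2$ governs it---and estimate the $|\nabla v|$-factor through Lemma~\ref{p-q} with $q=k$, which expresses $\|\nabla v\|_{L^p(\Omega)}$ in terms of $\big\||\nabla v|^{k-1}|D^2v|\big\|_{L^2(\Omega)}$, i.e.\ the second dissipation integral. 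The constraints (\ref{k}) and (\ref{p-1}) on $k$ and $p$ are precisely what renders both interpolations subcritical, so a final Young's inequality absorbs the two mixed terms into $C_{18}\int_\Omega u^{p-2}|\nabla u|^2+C_{18}\int_\Omega|\nabla v|^{2(k-1)}|D^2v|^2$ up to the constant $C_{19}$.

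The main obstacle is the absorption step: verifying that under (\ref{k})--(\ref{p-1}) the exponents produced by Gagliardo--Nirenberg and by Lemma~\ref{p-q} combine to a total power of the dissipation norms strictly below the critical threshold, so that Young's inequality can close the estimate. This exponent bookkeeping---in particular checking that the admissible window for $p$ in (\ref{p-1}) is nonempty, which is where the lower bound $k>\frac n2-\alpha n-2\alpha$ in (\ref{k}) enters---is the delicate quantitative part, whereas the two differential identities themselves are routine.
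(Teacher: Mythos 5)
Your strategy is exactly the paper's: test the first equation with $u^{p-1}$, compute $\frac{1}{2k}\frac{\mathrm{d}}{\mathrm{d}t}\int_\Omega|\nabla v|^{2k}$ via $\nabla v\cdot\nabla\Delta v=\tfrac12\Delta|\nabla v|^2-|D^2v|^2$, keep the two boundary integrals, and absorb the two mixed interior terms by H\"older, Lemma~\ref{p-q} with $q=k$ and $p=2k+2$, Gagliardo--Nirenberg against the conserved mass, and Young. One of your two mixed terms is, however, off by one power of $|\nabla v|$, and this is not harmless. From $-\int_\Omega|\nabla v|^{2(k-1)}\nabla v\cdot\nabla(uv)$ the interior contributions are bounded by $C\int_\Omega|\nabla v|^{2k-2}|D^2v|\,uv$ (not $|\nabla v|^{2k-1}|D^2v|\,u$), so the Young split produces $\varepsilon\int_\Omega|\nabla v|^{2(k-1)}|D^2v|^2+C\int_\Omega u^2v^2|\nabla v|^{2(k-1)}$, i.e.\ the mixed term carries $|\nabla v|^{2(k-1)}$, as in \eqref{all-1}. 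With your version $\int_\Omega u^2|\nabla v|^{2k}$, H\"older gives $\|\nabla v\|_{L^{2k+2}(\Omega)}^{2k}\|u\|_{L^{k+1}(\Omega)}^{2}$, Lemma~\ref{p-q} turns the first factor into the dissipation to the power $\frac{k}{k+1}$ rather than $\frac{k-1}{k+1}$, and the subcriticality condition replacing \eqref{<1-2} becomes $p>2k+1-\frac{2}{n}$ instead of $p>k+1-\frac{2}{n}$; this is not implied by \eqref{p-1}, and for $\alpha\leqslant\frac{n-1}{2n}$ the resulting window for $p$ can be empty. So the exponent must be $2(k-1)$ for the absorption to close under the stated hypotheses.

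Beyond that slip, you have deferred precisely the part that constitutes the bulk of the paper's proof: the verification that the Gagliardo--Nirenberg exponents $\theta_1=\frac{\frac{p}{2(k+1)}-\frac{p}{2}}{\frac12-\frac1n-\frac p2}$ and $\theta_2=\frac{\frac{k+2\alpha}{2(k+1)}-\frac p2}{\frac12-\frac1n-\frac p2}$ lie in $(0,1)$ and that the combined powers of the two dissipation norms are strictly less than one (the inequalities the paper records as \eqref{theta1}--\eqref{<1-2}), as well as the nonemptiness of the interval in \eqref{p-1}. You correctly identify where the hypotheses \eqref{k} and \eqref{p-1} must enter, but the lemma is not proved until these computations are carried out; with the corrected exponent $2(k-1)$ they do go through exactly as in the paper.
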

\begin{proof}
We first verify (\ref{p-1}) using (\ref{k}), which justifies the choice of $p$. Because of $k>\frac{n}{2}-\alpha n-2\alpha$, we have
\begin{align}\label{k-p-1}
\frac{2k+4\alpha}{n(1-2\alpha)}
>\frac{2(\frac{n}{2}-\alpha n-2\alpha)+4\alpha}{n(1-2\alpha)}
=1.
\end{align}
Due to $\frac{n-2}{2n}<\alpha<\frac{1}{2}$ and $k>1$, we infer that
\begin{align}\label{k-p-2}
\Big(k+1-\frac{2}{n}\Big)- \frac{2k+4\alpha}{n(1-2\alpha)}
=\frac{2n(\frac{n-2}{2n}-\alpha)(k+1)}{n(1-2\alpha)}
<0.
\end{align}
Combining (\ref{k-p-1}) with (\ref{k-p-2}), (\ref{p-1}) is justified. 

By $p>k+1-\frac{2}{n}>(1-\frac{2}{n})k+1-\frac{2}{n}$, we deduce that
\begin{align}\label{theta1}
\Big(\frac{p}{2(k+1)}-\frac{p}{2}\Big)-\Big(\frac{1}{2}-\frac{1}{n}-\frac{p}{2}\Big)
>\frac{(1-\frac{2}{n})(k+1)}{2(k+1)}-\Big(\frac{1}{2}-\frac{1}{n}\Big)
=0.
\end{align}
The assumption $k>\frac{n}{2}-\alpha n-2\alpha>\frac{n}{2}-\alpha n-1$ ensures that
\begin{align}\label{theta2}
&\Big(\frac{k+2\alpha}{2(k+1)}-\frac{p}{2}\Big)-\Big(\frac{1}{2}-\frac{1}{n}-\frac{p}{2}\Big) \notag\\
=&\frac{k+2\alpha}{2(k+1)}-\Big(\frac{1}{2}-\frac{1}{n}\Big) \notag\\
=&\frac{\frac{2}{n}(k+1)-(1-2\alpha)}{2(k+1)} \notag\\
=&\frac{1}{n}-\frac{1-2\alpha}{2(k+1)} \notag\\
>&\frac{1}{n}-\frac{1-2\alpha}{2(\frac{n}{2}-\alpha n)} \notag\\
=&0.
\end{align}
Moreover, 
\begin{align}\label{<0}
\frac{1}{2}-\frac{1}{n}-\frac{p}{2}<-\frac{1}{n}<0
\end{align}
and
\begin{align}\label{<0-1}
\frac{k+2\alpha}{2(k+1)}-\frac{p}{2}<\frac{k+2\alpha}{2(k+1)}-\frac{1}{2}\Big(1-\frac{1-2\alpha}{k+1}\Big)=0
\end{align}
as consequences of $p>1>1-\frac{1-2\alpha}{k+1}$. 
According to $p<\frac{2k+4\alpha}{n(1-2\alpha)}$ and (\ref{<0}), we have
\begin{align}\label{<1-1}
\frac{\frac{k+2\alpha}{2(k+1)}-\frac{p}{2}}{\frac{1}{2}-\frac{1}{n}-\frac{p}{2}} 
+ \frac{1-2\alpha}{k+1}-1
=\frac{\frac{1}{n}-\frac{1-2\alpha}{k+1}(\frac{1}{n}+\frac{p}{2})}{\frac{1}{2}-\frac{1}{n}-\frac{p}{2}}
<\frac{\frac{1}{n}-\frac{1-2\alpha}{k+1}(\frac{1}{n}+\frac{k+2\alpha}{n(1-2\alpha)})}{\frac{1}{2}-\frac{1}{n}-\frac{p}{2}}
=0.
\end{align}
Combining $p>k+1-\frac{2}{n}$ with (\ref{<0}), we deduce that
\begin{align}\label{<1-2}
\frac{2}{p} \cdot \frac{\frac{p}{2(k+1)}-\frac{p}{2}}{\frac{1}{2}-\frac{1}{n}-\frac{p}{2}}+\frac{k-1}{k+1}-1
= &\frac{-\frac{k}{k+1}+\frac{k-1}{k+1}(\frac{1}{2}-\frac{1}{n}-\frac{p}{2})}{\frac{1}{2}-\frac{1}{n}-\frac{p}{2}}-1 \notag \\
<&\frac{-\frac{k}{k+1}+\frac{k-1}{k+1}(\frac{1}{2}-\frac{1}{n}-(\frac{k}{2}+\frac{1}{2}-\frac{1}{n}))}{\frac{1}{2}-\frac{1}{n}-\frac{p}{2}}-1 \notag \\
= &\frac{-\frac{k}{2}}{\frac{1}{2}-\frac{1}{n}-\frac{p}{2}}-1 \notag \\
= &\frac{\frac{p}{2}-\frac{1}{2}+\frac{1}{n}-\frac{k}{2}}{\frac{1}{2}-\frac{1}{n}-\frac{p}{2}} \notag \\
<&0.
\end{align}

Multiplying the first equation in (\ref{1.0.0}) by $u^{p-1}$, integrating by parts and applying (\ref{f-global-1}), we obtain that
\begin{align}\label{u2}
  \frac{1}{p} \frac{\mathrm{d}}{\mathrm{d}t} \int_{\Omega}u^p \mathrm{d}x 
   = & -(p-1)\int_{\Omega} u^{p-2} |\nabla u|^2 \mathrm{d}x - (p-1)\int_{\Omega} u^{p-1} f(|\nabla v|^2) \nabla u \cdot \nabla v \mathrm{d}x \notag \\ 
   \leqslant & -\frac{p-1}{2} \int_{\Omega} u^{p-2} |\nabla u|^2 \mathrm{d}x 
              + \frac{K^2_{1}(p-1)}{2} \int_{\Omega} \frac{u^p}{(1+|\nabla v|^2)^{2\alpha}} |\nabla v|^2 \mathrm{d}x  \notag \\
   \leqslant & -\frac{p-1}{2} \int_{\Omega} u^{p-2} |\nabla u|^p \mathrm{d}x 
                + \frac{K^2_{1}(p-1)}{2} \int_{\Omega} {u^p} |\nabla v|^{2-4\alpha} \mathrm{d}x
\end{align}
for all $t \in (0,T_{\max})$. Using the second equation in (\ref{1.0.0}), we infer that
\begin{align}\label{v2-4alpha}
\frac{1}{2k} \frac{\mathrm{d}}{\mathrm{d}t} \int_{\Omega} |\nabla v|^{2k} \mathrm{d}x 
   = &\int_{\Omega}  |\nabla v|^{2(k-1)} \nabla v \cdot \nabla v_t \mathrm{d}x  \notag \\
   = &\int_{\Omega}  |\nabla v|^{2(k-1)} \nabla v \cdot \nabla (\Delta v-uv) \mathrm{d}x  \notag \\
   = &\int_{\Omega}  |\nabla v|^{2(k-1)} \nabla v \cdot \nabla \Delta v \mathrm{d}x 
      - \int_{\Omega}  |\nabla v|^{2(k-1)} \nabla v \cdot \nabla (uv) \mathrm{d}x
\end{align}
for all $t\in (0,T_{\max})$. Due to $\nabla v \cdot \nabla \Delta v= \frac{1}{2} \Delta |\nabla v|^2-|D^2v|^2$, we have
\begin{align}\label{v2-4alpha-1}
  &\int_{\Omega}  |\nabla v|^{2(k-1)} \nabla v \cdot \nabla \Delta v \mathrm{d}x 
     = \frac{1}{2} \int_{\Omega}  |\nabla v|^{2(k-1)} \Delta |\nabla v|^2 \mathrm{d}x 
       - \int_{\Omega}  |\nabla v|^{2(k-1)} |D^2v|^2 \mathrm{d}x  \notag \\
     = & -\frac{k-1}{2} \int_{\Omega} |\nabla v|^{2(k-2)} |\nabla |\nabla v|^2|^2 \mathrm{d}x 
      + \frac{1}{2} \int_{\partial \Omega} |\nabla v|^{2(k-1)} \frac{\partial |\nabla v|^2}{\partial n} \mathrm{d}x \notag \\
     & -\int_{\Omega} |\nabla v|^{2(k-1)} |D^2 v|^2 \mathrm{d}x .
\end{align}
Integrating by parts, we get
\begin{align}\label{v2-4alpha-2}
&\int_{\Omega} |\nabla v|^{2(k-1)} \nabla v \cdot \nabla (uv) \mathrm{d}x \notag \\
  =& -\int_{\Omega} |\nabla v|^{2(k-1)} uv \Delta v \mathrm{d}x  
    -(k-1) \int_{\Omega} |\nabla v|^{2(k-2)} uv \nabla |\nabla v|^2 \cdot \nabla v \mathrm{d}x  \notag \\
   & + \int_{\partial \Omega} |\nabla v|^{2(k-1)} uv \frac{\partial v}{\partial n} \mathrm{d}x.
\end{align}
By means of Hölder's inequality, we find that
\begin{align}\label{v2-4alpha-2-1}
\int_{\Omega} |\nabla v|^{2(k-1)} uv \Delta v \mathrm{d}x
  \leqslant & \frac{1}{2n} \int_{\Omega} |\nabla v|^{2(k-1)} |\Delta v|^2 \mathrm{d}x 
            + \frac{n}{2}\int_{\Omega} |\nabla v|^{2(k-1)} u^2 v^2 \mathrm{d}x \notag \\
  \leqslant & \frac{1}{2} \int_{\Omega} |\nabla v|^{2(k-1)} |D^2 v|^2 \mathrm{d}x 
            + \frac{n}{2}\int_{\Omega} |\nabla v|^{2(k-1)} u^2 v^2 \mathrm{d}x 
\end{align}
and
\begin{align}\label{v2-4alpha-2-2}
& (k-1)\int_{\Omega} |\nabla v|^{2(k-2)} uv \nabla |\nabla v|^2 \cdot \nabla v \mathrm{d}x \notag \\
\leqslant & \frac{k-1}{4}\int_{\Omega} |\nabla v|^{2(k-2)} |\nabla |\nabla v|^2|^2 \mathrm{d}x
            +(k-1)  \int_{\Omega} |\nabla v|^{2(k-1)} u^2 v^2 \mathrm{d}x .
\end{align}
Inserting (\ref{v2-4alpha-1})-(\ref{v2-4alpha-2-2}) into (\ref{v2-4alpha}), we obtain
\begin{align*}
&\frac{1}{2k} \frac{\mathrm{d}}{\mathrm{d}t}\int_{\Omega} |\nabla v|^{2k} \mathrm{d}x 
+\frac{k-1}{4} \int_{\Omega} |\nabla v|^{2(k-2)} |\nabla |\nabla v|^2|^2 \mathrm{d}x
+\frac{1}{2}\int_{\Omega} |\nabla v|^{2(k-1)} |D^2 v|^2 \mathrm{d}x  \notag \\
  \leqslant & (k-1+\frac{n}{2}) \int_{\Omega} |\nabla v|^{2(k-1)} u^2 v^2 \mathrm{d}x
             +\frac{1}{2} \int_{\partial \Omega} |\nabla v|^{2(k-1)} \frac{\partial |\nabla v|^2}{\partial n} \mathrm{d}x 
             - \int_{\partial \Omega} |\nabla v|^{2(k-1)} uv \frac{\partial v}{\partial n} \mathrm{d}x
\end{align*}
for all $t \in (0,T_{\max})$. Combining this with (\ref{u2}), and using $|\nabla|\nabla v|^2|^2=4|\nabla v|^2|D^2 v|^2$ and (\ref{vmax}), we deduce that
\begin{align}\label{all-1}
&\frac{\mathrm{d}}{\mathrm{d}t}\left(\frac{1}{2k}\int_{\Omega} |\nabla v|^{2k} \mathrm{d}x 
   + \frac{1}{p} \int_{\Omega}u^p \mathrm{d}x \right)
   +\frac{p-1}{2}\int_{\Omega} u^{p-2} |\nabla u|^2 \mathrm{d}x 
   +(k-\frac{1}{2})\int_{\Omega} |\nabla v|^{2(k-1)} |D^2 v|^2 \mathrm{d}x  \notag \\
\leqslant & \Big(k-1+\frac{n}{2}\Big)\|v_0\|_{L^{\infty}(\Omega)}^2 \int_{\Omega} u^2 |\nabla v|^{2(k-1)} \mathrm{d}x
             +\frac{K^2_{1}(p-1)}{2} \int_{\Omega}  u^p |\nabla v|^{2-4\alpha} \mathrm{d}x \notag \\
           &+\frac{1}{2} \int_{\partial \Omega} |\nabla v|^{2(k-1)} \frac{\partial |\nabla v|^2}{\partial n} \mathrm{d}x 
             + \int_{\partial \Omega} |\nabla v|^{2(k-1)} uv \big|\frac{\partial v}{\partial n}\big| \mathrm{d}x 
\end{align}
for all $t \in (0,T_{\max})$. 

In the following, we estimate the first term on the right side of (\ref{all-1}). Applying Hölder's inequality, we have
\begin{align}\label{u2 nabla v2-4alpha}
  \int_{\Omega} u^2 |\nabla v|^{2(k-1)}  \mathrm{d}x
    \leqslant \left(\int_{\Omega}|\nabla v|^{2(k+1)}\mathrm{d}x\right)^{\frac{k-1}{k+1}}
              \left(\int_{\Omega}u^{k+1}\mathrm{d}x\right)^{\frac{2}{k+1}}.
\end{align}
According to Lemma~\ref{p-q} with $q=k$ and $p=2k+2$, we have  
\begin{align}\label{v-lemma p-q}
 \left(\int_{\Omega}|\nabla v|^{2(k+1)}\mathrm{d}x\right)^{\frac{k-1}{k+1}} = \big\|\nabla v\big\|_{L^{2k+2}(\Omega)}^{2k-2}
  \leqslant C_{17} \left\||\nabla v|^{k-1} | D^2v| \right\|_{L^2(\Omega)}^{\frac{2(k-1)}{k+1}}+C_{17}.
\end{align}
The Gagliardo-Nirenberg inequality implies the existence of a positive constant $C_{20}=C_{20}(p,k)$ such that
\begin{align}\label{u3-2alpha-G-N}
  \left(\int_{\Omega}u^{k+1}\mathrm{d}x\right)^{\frac{2}{k+1}}
  = &\|u^{\frac{p}{2}}\|_{L^{\frac{2(k+1)}{p}}(\Omega)}^{\frac{4}{p}}  \notag \\
  \leqslant & C_{20}\|\nabla u^{\frac{p}{2}}\|_{L^2(\Omega)}^{\frac{4\theta_1}{p}} 
              \|u^{\frac{p}{2}}\|_{L^{\frac{2}{p}}(\Omega)}^{\frac{4(1-\theta_1)}{p}}
              +C_{20}\|u^{\frac{p}{2}}\|_{L^{\frac{2}{p}}(\Omega)}^{\frac{4}{p}},
\end{align}
where $\theta_1 = \frac{\frac{p}{2(k+1)}-\frac{p}{2}}{\frac{1}{2}-\frac{1}{n}-\frac{p}{2}} \in (0,1)$ because of (\ref{theta1}) and (\ref{<0}). Inserting (\ref{v-lemma p-q}) and (\ref{u3-2alpha-G-N}) into (\ref{u2 nabla v2-4alpha}), and using (\ref{a-b}) due to (\ref{<1-2}), for any $\varepsilon_1>0$, there exists a positive constant $C_{21}=C_{21}(\alpha,R,K_{1},\varepsilon_1,k,p)$ such that
\begin{align}\label{var1}
&(k-1+\frac{n}{2})\|v_0\|_{L^{\infty}(\Omega)}^2  \int_{\Omega} u^2 |\nabla v|^{2(k-1)}  \mathrm{d}x \notag \\
\leqslant & \varepsilon_1 \int_{\Omega} |\nabla v|^{2(k-1)} |D^2 v|^2 \mathrm{d}x
           + \varepsilon_1 \int_{\Omega} u^{p-2} |\nabla u|^2 \mathrm{d}x
           + C_{21}.
\end{align}

For the second term on the right side of (\ref{all-1}), by means of Hölder's inequality, we get
\begin{align}\label{up nabla2-4alpha}
\int_{\Omega}  u^p |\nabla v|^{2-4\alpha} \mathrm{d}x
\leqslant \left(\int_{\Omega} |\nabla v|^{2(k+1)} \mathrm{d}x \right)^{\frac{1-2\alpha}{k+1}}
          \left(\int_{\Omega} u^{\frac{p(k+1)}{k+2\alpha}} \right)^{\frac{k+2\alpha}{k+1}}.
\end{align}
According to Lemma~\ref{p-q} with $q=k$ and $p=2k+2$, we have
\begin{align}\label{nablav2k+2}
\left(\int_{\Omega} |\nabla v|^{2(k+1)} \mathrm{d}x \right)^{\frac{1-2\alpha}{k+1}}
=\|\nabla v\|_{L^{2k+2}(\Omega)}^{2(1-2\alpha)}
\leqslant C_{17}\left\||\nabla v|^{q-1}  |D^2v| \right\|_{L^2(\Omega)}^{\frac{2(1-2\alpha)}{k+1}} +C_{17}.
\end{align}
Applying Gagliardo-Nirenberg inequality, there exists a constant $C_{22}=C_{22}(k,\alpha,p)>0$ such that
\begin{align}\label{u-G-N-1}
\left(\int_{\Omega} u^{\frac{p(k+1)}{k+2\alpha}} \right)^{\frac{k+2\alpha}{k+1}}
=\|u^{\frac{p}{2}}\|_{L^{\frac{2(k+1)}{k+2\alpha}}(\Omega)}^2
\leqslant C_{22} \|\nabla u^{\frac{p}{2}}\|_{L^2(\Omega)}^{2\theta_2}  \|u^{\frac{p}{2}}\|_{L^{\frac{2}{p}}(\Omega)}^{2(1-\theta_2)}
          +C_{22} \|u^{\frac{p}{2}}\|_{L^{\frac{2}{p}}(\Omega)}^{2},
\end{align}
where $\theta_2=\frac{\frac{k+2\alpha}{2(k+1)}-\frac{p}{2}}{\frac{1}{2}-\frac{1}{n}-\frac{p}{2}} \in (0,1)$ because of (\ref{theta2}), (\ref{<0}) and (\ref{<0-1}). 
Inserting (\ref{u-G-N-1}) and (\ref{nablav2k+2}) into (\ref{up nabla2-4alpha}), and using (\ref{a-b}) due to (\ref{<1-1}), for any $\varepsilon_2>0$, there exists a constant $C_{23}=C_{23}(\alpha,R,K_{1},\varepsilon_2,k,p )>0$ such that
\begin{align}\label{var2}
\frac{K^2_{1}(p-1)}{2} \int_{\Omega}  u^p |\nabla v|^{2-4\alpha} \mathrm{d}x
\leqslant \varepsilon_2 \int_{\Omega} |\nabla v|^{2(k-1)} |D^2 v|^2 \mathrm{d}x
           + \varepsilon_2 \int_{\Omega} u^{p-2} |\nabla u|^2 \mathrm{d}x
           + C_{23}.
\end{align}
Inserting (\ref{var1}) and (\ref{var2}) into (\ref{all-1}), and choosing $\varepsilon_1$ and $\varepsilon_2$ small enough yields (\ref{all-2}).
\end{proof}
Considering (\ref{all-2}), the associated boundary integrals will be estimated by making use of the radial symmetry of $v$. Thus, the following $L^p$ estimate for $u$ can be derived.
\begin{lem}\label{lem-3.2.3}
Let $(u,v)$ be the classical solution of problem $(\ref{1.0.0})$ with $\tau=1$. Suppose that $f(\xi)$ satisfies $(\ref{f-lip})$ and $(\ref{f-global-1})$ with $\alpha \in (\frac{n-2}{2n},\frac{1}{2})$ and $K_{1}>0$. Assume that $u_0$ and $v_0$ are radially symmetric and satisfy $(\ref{u_0})$ and $(\ref{v 0})$ respectively. Then, for any $k$ and $p$ satisfy (\ref{k}) and (\ref{p-1}) respectively, there exists a positive constant $C_{24}=C_{24}(\alpha,R,K_{1},p,k)>0$ such that 
\begin{align}\label{u-1}
\int_{\Omega}u^p \mathrm{d}x \leqslant C_{24},
\quad t \in (0,T_{\max}).
\end{align}
\end{lem}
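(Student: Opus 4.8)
The plan is to convert the differential inequality (\ref{all-2}) into an absorbing ordinary differential inequality of the form $y'(t)+y(t)\leqslant g(t)$, where $y(t):=\frac{1}{p}\int_\Omega u^p+\frac{1}{2k}\int_\Omega|\nabla v|^{2k}$ and $g$ has controlled time averages, and then to invoke Lemma~\ref{lem-3.1.0.2}. Two things must be accomplished: the two boundary integrals on the right of (\ref{all-2}) must be dominated by $C(1+u(R,t))$, and the term $y(t)$ added on the left must be absorbed into the dissipation terms $\int_\Omega|\nabla v|^{2(k-1)}|D^2v|^2$ and $\int_\Omega u^{p-2}|\nabla u|^2$.

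First I would treat the boundary integrals using radial symmetry, which is the crux of the argument. On $\partial\Omega=\{|x|=R\}$ one has $v=M$, and by radiality $|\nabla v|=|v_r|$ and $\partial v/\partial n=v_r$, so Lemma~\ref{vr-all} gives $|\nabla v|=|v_r(R,t)|\leqslant C_{11}$ there. The second boundary integral is then immediately bounded by $C_{11}^{2(k-1)}\cdot M\cdot C_{11}\cdot|\partial\Omega|\cdot u(R,t)$, i.e. by $C(1+u(R,t))$. For the first boundary integral I would write $\partial|\nabla v|^2/\partial n=\partial_r(v_r^2)=2v_rv_{rr}$ at $r=R$; since $v\equiv M$ on $\partial\Omega$ forces $v_t=0$ there, the second equation in (\ref{1.0.0}) with $\tau=1$ yields $\Delta v=uM$ on $\partial\Omega$, whence $v_{rr}(R,t)=Mu(R,t)-\tfrac{n-1}{R}v_r(R,t)$. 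Combining this with $|v_r(R,t)|\leqslant C_{11}$ bounds the first boundary integral by $C(1+u(R,t))$ as well.

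Next I would carry out the absorption of $y(t)$. For the $u$-part, the Gagliardo-Nirenberg inequality applied to $u^{p/2}$ together with the mass bound (\ref{mass-1}) gives $\int_\Omega u^p\leqslant\varepsilon\int_\Omega u^{p-2}|\nabla u|^2+C(\varepsilon)$, the relevant interpolation exponent lying in $(0,1)$ because $p>1$. For the $\nabla v$-part, I would apply Lemma~\ref{p-q} with the exponents $k$ and $2k+2$ in the roles of $q$ and $p$ respectively (admissible since (\ref{k}) guarantees $k>\max\{1,\frac{n-2}{2}\}$ and makes $2k+2$ the left endpoint of the allowed range), obtaining $\|\nabla v\|_{L^{2k+2}(\Omega)}\leqslant C\||\nabla v|^{k-1}|D^2v|\|_{L^2(\Omega)}^{1/(k+1)}+C$; since $\int_\Omega|\nabla v|^{2k}\leqslant C\|\nabla v\|_{L^{2k+2}(\Omega)}^{2k}$ on the bounded domain and $\frac{k}{k+1}<1$, Young's inequality yields $\int_\Omega|\nabla v|^{2k}\leqslant\varepsilon\int_\Omega|\nabla v|^{2(k-1)}|D^2v|^2+C(\varepsilon)$. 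Choosing $\varepsilon$ small relative to $C_{18}$ and adding $y(t)$ to (\ref{all-2}) therefore produces $y'(t)+y(t)\leqslant C_{25}\big(1+u(R,t)\big)$ for all $t\in(0,T_{\max})$.

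Finally I would close the estimate by applying Lemma~\ref{lem-3.1.0.2} with $a=1$ and $g(t)=C_{25}(1+u(R,t))$: the time-average hypothesis holds because $\int_t^{t+h}u(R,\sigma)\,d\sigma\leqslant C_6$ by Lemma~\ref{uR-time-all}, so $\frac{1}{h}\int_t^{t+h}g\leqslant C_{25}(1+C_6/h)=:b$, and the lemma gives $y(t)\leqslant y(0)+\frac{bh}{1-e^{-h}}$ uniformly in $t$. Since $\int_\Omega u^p\leqslant p\,y(t)$, this is exactly (\ref{u-1}). I expect the boundary term $\frac12\int_{\partial\Omega}|\nabla v|^{2(k-1)}\,\partial|\nabla v|^2/\partial n$ to be the main obstacle, because controlling it requires converting a normal derivative of $|\nabla v|^2$ into $v_{rr}$ at the boundary and then eliminating $v_{rr}$ through the Dirichlet condition and the equation, a step available only thanks to radial symmetry and $\tau=1$.
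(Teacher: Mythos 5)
Your proposal is correct and follows essentially the same route as the paper: the boundary terms are controlled through the radial identity $\frac{\partial|\nabla v|^2}{\partial n}=2v_rv_{rr}$ combined with $\Delta v=uv$ on $\partial\Omega$, the pointwise bound of Lemma~\ref{vr-all} and the time-averaged bound of Lemma~\ref{uR-time-all}, and the added $y(t)$ is absorbed into the dissipation terms before invoking Lemma~\ref{lem-3.1.0.2}. The only cosmetic difference is that you absorb $\int_\Omega|\nabla v|^{2k}$ via Lemma~\ref{p-q} with $(q,p)=(k,2k+2)$, whereas the paper interpolates $\left\||\nabla v|^k\right\|_{L^2(\Omega)}$ against the uniform $L^a$ gradient bound of Lemma~\ref{heat semigroup}; both yield a power strictly below $2$ of $\left\||\nabla v|^{k-1}|D^2v|\right\|_{L^2(\Omega)}$ and hence the same Young-type absorption.
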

\begin{proof} 

Applying Gagliardo-Nirenberg inequality and Young's inequality, using (\ref{nablavs}), there exists a constant $C_{26}=C_{26}(k)>0$ such that
\begin{align}\label{nablav4-4alpha} 
\int_{\Omega} |\nabla v|^{2k}\mathrm{d}x 
   = & \left\||\nabla v|^{k}\right\|_{L^2(\Omega)}^2 \notag \\
   \leqslant & C_{26} \left\|\nabla |\nabla v|^{k}\right\|_{L^2(\Omega)}^{2\theta_3}
               \left\| |\nabla v|^{k}\right\|_{L^{\frac{a}{k}}(\Omega)}^{2(1-\theta_3)}
               + C_{26} \left\| |\nabla v|^{k}\right\|_{L^{\frac{a}{k}}(\Omega)}^2 \notag \\
   \leqslant & C_{26}C_2^{2k(1-\theta_3)} \left\|\nabla |\nabla v|^{k}\right\|_{L^2(\Omega)}^{2\theta_3} +C_{26}C_2^{2k} \notag \\
   \leqslant & k^{2\theta_3}C_{26}C_2^{2k(1-\theta_3)} \left\||\nabla v|^{k-1} |D^2v| \right\|_{L^2(\Omega)}^{2\theta_3} +C_{26}C_2^{2k}  ,
\end{align}
where $\theta_3=\frac{2kn-na}{2a-na+2kn} \in (0,1)$ because of $k>1$ and $a>1$. Similarly, by Gagliardo-Nirenberg inequality, we can find a constant $C_{27}=C_{27}(p)>0$ such that
\begin{align}\label{u2-G-N}
\int_{\Omega}u^p \mathrm{d}x 
  \leqslant & C_{27}\| \nabla u^{\frac{p}{2}}\|_{L^2(\Omega)}^{2\theta_4}   \|u^{\frac{p}{2}}\|_{L^{\frac{2}{p}}(\Omega)}^{2(1-\theta_4)}  +   C_{27}\|u^{\frac{p}{2}}\|_{L^{\frac{2}{p}}(\Omega)}^2 ,
\end{align}
where $\theta_4=\frac{\frac{1}{2}-\frac{p}{2}}{\frac{1}{2}-\frac{1}{n}-\frac{p}{2}} \in (0,1)$ due to (\ref{<0}) and $p>1$. Combining (\ref{all-2}) with (\ref{nablav4-4alpha}) and (\ref{u2-G-N}), using Young's inequality, there exists a constant $C_{28}=C_{28}(\alpha,k,p,K_{1},R)>0$ such that
\begin{align}\label{all-3}
 & \frac{\mathrm{d}}{\mathrm{d} t} \left(\frac{1}{p}\int_{\Omega}u^p \mathrm{d}x  +  \frac{1}{2k}\int_{\Omega} |\nabla v|^{2k}\mathrm{d}x  \right)
   + \frac{1}{p}\int_{\Omega}u^p \mathrm{d}x  +  \frac{1}{2k}\int_{\Omega} |\nabla v|^{2k}\mathrm{d}x  \notag \\
 \leqslant & \frac{1}{2}\int_{\partial \Omega} |\nabla v|^{2(k-1)} \frac{\partial |\nabla v|^2}{\partial n} \mathrm{d}x 
             + \int_{\partial \Omega} |\nabla v|^{2(k-1)} uv |\frac{\partial v}{\partial n}| \mathrm{d}x +C_{28}.
\end{align}

For the boundary integrals, using the second equation in (\ref{1.0.0}) and $v=M$ on $\partial \Omega$, we deduce that
\begin{align*}
  \frac{\partial\left|\nabla v\right|^2}{\partial \nu}
= &2 v_r v_{ r r} \notag \\
= &2 v_{ r} \cdot\left\{v_{ r r}+\frac{1}{R} v_{r}\right\}-\frac{2}{R} v_{r}^2 \notag \\
= &2 u v v_{ r}-\frac{2}{R } v_{ r}^2 \notag \\
\leqslant & 2 u v v_{ r},
 \quad t \in\left(0, T_ {\max }\right)   .
\end{align*}
Combining this with (\ref{uR}), (\ref{vr}) and $k>1$, we obtain
\begin{align}\label{partial nablav}
\int_t^{t+h}\int_{\partial \Omega} |\nabla v|^{2(k-1)} \frac{\partial |\nabla v|^2}{\partial n} \mathrm{d}x \mathrm{d}s
  \leqslant & 2M\int_t^{t+h} u(R,s) v_r^{2k-1}(R,s) \mathrm{d}x \mathrm{d}s \notag \\
  \leqslant & 2M\| v_r\|_{L^{\infty}\left((\frac{4R}{5},R)\right)}^{2k-1} \int_t^{t+h} u(R,s) \mathrm{d}x \mathrm{d}s \notag \\
  \leqslant & 2MC_{11}^{2k-1} C_6,
  \quad t \in [0,T_{\max}-h). 
\end{align}
Similarly, there exists a constant $C_{25}=C_{25}(\alpha,R,K_{1},k)>0$ such that
\begin{align}\label{partial uv}
\int_t^{t+h}\int_{\partial \Omega} |\nabla v|^{2(k-1)} uv |\frac{\partial v}{\partial n}| \mathrm{d}x \mathrm{d}s \leqslant C_{25},
\quad t \in [0,T_{\max}-h).
\end{align}
We define 
\begin{align*}
y(t)=\frac{1}{p} \int_{\Omega}(1+u)^p \mathrm{d}x 
    +\frac{1}{2k}\int_{\Omega} |\nabla v|^{2k} \mathrm{d}x  
\end{align*}
and 
\begin{align*}
g(t):=  \frac{1}{2} \int_{\partial \Omega} |\nabla v|^{2(k-1)} \frac{\partial |\nabla v|^2}{\partial n} \mathrm{d}x 
        + \int_{\partial \Omega} |\nabla v|^{2(k-1)} uv \left|\frac{\partial v}{\partial n}\right| \mathrm{d}x  +C_{28},
\end{align*}
which satisfies $\int_t^{t+h}  g(s) \mathrm{d}s \leqslant C_{29}(\alpha,R,K_{1},k,p) $ for all $t \in (0,T_{\max}-h)$ with $h=\min \left\{1, \frac{1}{2} T_{\max }\right\}$, as given by (\ref{partial nablav}) and (\ref{partial uv}). It follows from (\ref{all-3}) that
\begin{align*}
y^{\prime}(t)+ y(t) \leqslant g(t), \quad t \in (0,T_{\max}),
\end{align*}
which implies (\ref{u-1}) by means of Lemma~\ref{lem-3.1.0.2}.
\end{proof}
\begin{lem}\label{lem-3.2.4}
Let $(u,v)$ be the classical solution of problem $(\ref{1.0.0})$ with $\tau=1$. Suppose that $f(\xi)$ satisfies $(\ref{f-lip})$ and $(\ref{f-global-1})$ with $\alpha \geqslant \frac{1}{2}$ and $K_{1}>0$. Assume that $u_0$ and $v_0$ are radially symmetric and satisfy $(\ref{u_0})$ and $(\ref{v 0})$ respectively. Then, for any $p>2$, there exists a positive constant $C_{30}=C_{30}(\alpha,R,K_{1},p)>0$ such that 
\begin{align}\label{u-2}
\int_{\Omega}u^p \mathrm{d}x \leqslant C_{30},
\quad t \in (0,T_{\max}).
\end{align}
\end{lem}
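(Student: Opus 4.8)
The plan is to exploit the strong flux limitation: when $\alpha\geqslant\frac12$ the chemotactic drift is essentially bounded, the $\nabla v$-coupling disappears, and a plain $L^p$-testing argument closes without any of the boundary machinery developed for the regime $\alpha<\frac12$. Testing the first equation in (\ref{1.0.0}) by $u^{p-1}$ and integrating by parts exactly as in the identity preceding (\ref{u2}) (the no-flux boundary condition makes the boundary term vanish), I obtain
\begin{align*}
\frac1p\frac{\mathrm d}{\mathrm dt}\int_\Omega u^p\,\mathrm dx
=-(p-1)\int_\Omega u^{p-2}|\nabla u|^2\,\mathrm dx
-(p-1)\int_\Omega u^{p-1}f(|\nabla v|^2)\nabla u\cdot\nabla v\,\mathrm dx,
\quad t\in(0,T_{\max}).
\end{align*}

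For the cross term I would apply Young's inequality together with (\ref{f-global-1}) to bound
\begin{align*}
\Big|(p-1)\int_\Omega u^{p-1}f(|\nabla v|^2)\nabla u\cdot\nabla v\,\mathrm dx\Big|
\leqslant\frac{p-1}2\int_\Omega u^{p-2}|\nabla u|^2\,\mathrm dx
+\frac{K_1^2(p-1)}2\int_\Omega u^p\,\frac{|\nabla v|^2}{(1+|\nabla v|^2)^{2\alpha}}\,\mathrm dx.
\end{align*}
The decisive point, valid precisely because $2\alpha\geqslant1$, is the elementary pointwise bound $\frac{|\nabla v|^2}{(1+|\nabla v|^2)^{2\alpha}}\leqslant\frac{|\nabla v|^2}{1+|\nabla v|^2}\leqslant1$, which eliminates $\nabla v$ altogether and leaves
\begin{align*}
\frac1p\frac{\mathrm d}{\mathrm dt}\int_\Omega u^p\,\mathrm dx
\leqslant-\frac{p-1}2\int_\Omega u^{p-2}|\nabla u|^2\,\mathrm dx
+\frac{K_1^2(p-1)}2\int_\Omega u^p\,\mathrm dx,
\quad t\in(0,T_{\max}).
\end{align*}

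Next I would absorb the zeroth-order term into the dissipation. Writing $\int_\Omega u^{p-2}|\nabla u|^2=\frac4{p^2}\|\nabla u^{p/2}\|_{L^2(\Omega)}^2$ and invoking the Gagliardo--Nirenberg inequality in the form (\ref{u2-G-N}) with exponent $\theta_4=\frac{\frac12-\frac p2}{\frac12-\frac1n-\frac p2}\in(0,1)$ (the requirement $p>2$, in particular $p>1$, guarantees $\theta_4\in(0,1)$), I estimate $\|u^{p/2}\|_{L^2(\Omega)}^2$ by $C\|\nabla u^{p/2}\|_{L^2(\Omega)}^{2\theta_4}\|u^{p/2}\|_{L^{2/p}(\Omega)}^{2(1-\theta_4)}+C\|u^{p/2}\|_{L^{2/p}(\Omega)}^2$. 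Mass conservation (\ref{mass-1}) keeps $\|u^{p/2}\|_{L^{2/p}(\Omega)}^{2/p}=\int_\Omega u_0$ fixed, and since $2\theta_4<2$, Young's inequality turns $\|\nabla u^{p/2}\|_{L^2(\Omega)}^{2\theta_4}$ into an arbitrarily small multiple of $\|\nabla u^{p/2}\|_{L^2(\Omega)}^2$ plus a constant.

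Finally I would add $\frac1p\int_\Omega u^p$ to both sides of the last differential inequality and choose the Young parameter small enough to absorb the entire first-order contribution into $\frac{p-1}2\int_\Omega u^{p-2}|\nabla u|^2$, arriving at $y'(t)+y(t)\leqslant C$ with $y(t)=\frac1p\int_\Omega u^p\,\mathrm dx$; Lemma~\ref{lem-3.1.0.2} (applied with a constant right-hand side, or a direct Gronwall comparison) then yields (\ref{u-2}). I do not expect any genuine obstacle here: the whole difficulty of the range $\alpha<\frac12$ came from the \emph{positive} power $|\nabla v|^{2-4\alpha}$, which forced the delicate coupled control of $u$ and $\nabla v$ up to the boundary, whereas for $\alpha\geqslant\frac12$ that power is nonpositive and the flux-limited drift is simply bounded, so the only thing to check is the harmless interpolation exponent $\theta_4\in(0,1)$.
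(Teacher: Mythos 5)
Your proposal is correct and follows essentially the same route as the paper: test with $u^{p-1}$, apply Young's inequality, use the pointwise bound $\frac{\xi}{(1+\xi)^{2\alpha}}\leqslant 1$ valid for $\alpha\geqslant\frac12$, and then absorb $\int_\Omega u^p$ into the dissipation via the Gagliardo--Nirenberg inequality (\ref{u2-G-N}) and mass conservation before concluding by ODE comparison. This is exactly the paper's argument, so there is nothing further to add.
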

\begin{proof}
Multiplying the first equation in (\ref{1.0.0}) by $u^{p-1}$($p>1$), and integrating by parts, followed by the application of Young's inequality, we obtain 
\begin{align}\label{3.2.3.1}
   \frac{\mathrm{d}}{\mathrm{d} t} \int_{\Omega}u^p \mathrm{d} x 
  = &-p(p-1) \int_{\Omega}u^{p-2} |\nabla u|^2 \mathrm{d} x
     -p(p-1) \int_{\Omega} u^{p-1} f(|\nabla v|^2) \nabla u \cdot \nabla v \mathrm{d}x \notag\\
  \leqslant & -\frac{2(p-1)}{p} \int_{\Omega}|\nabla u^{\frac{p}{2}}|^2 \mathrm{d}x
              +\frac{p(p-1)}{2}K^2_{1} \int_{\Omega} u^{p}\frac{|\nabla v|^2}{(1+|\nabla v|^2)^{2\alpha}} \mathrm{d}x
\end{align}
for all $t \in (0,T_{\max})$. Due to $\frac{x}{(1+x)^{2\alpha}}\leqslant 1$ by $\alpha \geqslant \frac{1}{2}$, we obtain 
\begin{align}\label{+u^p-large}
  \frac{\mathrm{d}}{\mathrm{d} t} \int_{\Omega}u^p \mathrm{d}x + \int_{\Omega} u^p \mathrm{d}x 
  \leqslant -\frac{2(p-1)}{p} \int_{\Omega}|\nabla u^{\frac{p}{2}}|^2 \mathrm{d}x 
            +\left(\frac{p(p-1)}{2}K^2_{1}+1\right) \int_{\Omega} u^p \mathrm{d}x
\end{align}
for all $t \in (0,T_{\max})$. By (\ref{u2-G-N}) and Young's inequality, there exists a constant $C_{31}=C_{31}(p,\alpha)>0$ such that
\begin{align*}
\Big(\frac{p(p-1)}{2}K^2_{1}+1\Big) \int_{\Omega} u^{p} \mathrm{d}x
  \leqslant & \frac{2(p-1)}{p} ||\nabla u^{\frac{p}{2}}||^{2}_{L^2(\Omega)} +C_{31}.
\end{align*}
Inserting this into (\ref{+u^p-large}), we have 
\begin{align}\label{reslut-1}
  \frac{\mathrm{d}}{\mathrm{d} t} \int_{\Omega}u^p \mathrm{d}x + \int_{\Omega} u^p \mathrm{d}x 
  \leqslant C_{31},
\end{align}
which implies (\ref{u-2}).
\end{proof}
Applying the standard Moser-type iterative argument, we finally obtain the $L^{\infty}$ estimate of $u$.\\
\emph{\textbf{Proof of Theorem \ref{0.3}.}} Due to $\frac{2k+4\alpha}{n(1-2\alpha)} \rightarrow \infty$ as $k\rightarrow \infty$, by Lemma~\ref{lem-3.2.3} and Lemma~\ref{lem-3.2.4}, for any $p>1$, we have
\begin{align*}
\sup _{0<t<T_{\max }}\|u(\cdot, t)\|_{L^{p}(\Omega)} \leqslant C_{32}.
\end{align*}
Using the standard Dirichlet heat semigroup estimates again, we fix $p>n$ to deduce that
\begin{align*}
\left\|\nabla v(\cdot, t)\right\|_{L^{\infty}(\Omega)} 
= &\|\nabla\left(v(\cdot, t)-M\right)\|_{L^{\infty}(\Omega)} \notag \\
= &\left\|\nabla e^{t \Delta}\left(v_0-M\right)-\int_0^t \nabla e^{(t-s) \Delta}\big(u(\cdot, s) v(\cdot, s)\big) \mathrm{d} s\right\|_{L^{\infty}(\Omega)} \notag \\
\leqslant & C_3 \left\|v_0-M\right\|_{W^{1, \infty}(\Omega)} \notag \\
           &+C_4 \|v_0\|_{L^{\infty}(\Omega)} \|u\|_{L^{p}(\Omega)} \int_0^{\infty}\left(1+t^{-\frac{1}{2}-\frac{n}{2p}}\right) e^{-\lambda t} \mathrm{d} t \notag \\
\leqslant & C_{33},
\quad t \in (0,T_{\max}).
\end{align*}
Thus, the statement of Theorem \ref{0.3} can be established through a Moser-type iterative argument (cf.  \cite[Lemma A.1]{2012-JDE-TaoWinkler}).
\hfill$\Box$\\
\section{Boundedness when $\tau=0$ and $\alpha > \frac{n-2}{2n}$ }\label{section 2}

In this section, we aim to prove the boundedness of solution when $\alpha > \frac{n-2}{2n}$ and $\tau=0$. Throughout this section, we assume that $\Omega$ is a general bounded domain in $\mathbb{R}^n$. We first give a basic observation on the regularity of signal gradient. 
\begin{lem}\label{lem-3.1.0}
Let $(u,v)$ be the classical solution of problem $(\ref{1.0.0})$ with $\tau=0$. Suppose that $f$ and $u_0$ satisfy $(\ref{f-lip})$ and $(\ref{u_0})$, then there exists a constant $C_{34}>0$ such that 
\begin{align}\label{nabla v}
  \int_{\Omega} |\nabla v(x,t)|^2 \mathrm{d}x \leqslant C_{34},
  \quad t \in (0,T_{\max}).
\end{align}
\end{lem}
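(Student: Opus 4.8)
The plan is to avoid the parabolic equation for $u$ entirely and instead extract the estimate directly from the elliptic second equation $0=\Delta v-uv$, testing it against $v-M$. This choice of test function is dictated by the Dirichlet condition: since $v\equiv M$ on $\partial\Omega$, the function $v-M$ vanishes on the boundary, so integration by parts will produce no boundary contribution.

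Concretely, I would multiply $0=\Delta v-uv$ by $(v-M)$ and integrate over $\Omega$. Integrating by parts and using $\nabla(v-M)=\nabla v$ together with the vanishing boundary term $\int_{\partial\Omega}(v-M)\frac{\partial v}{\partial\nu}=0$ yields the exact identity
\[
\int_\Omega |\nabla v|^2\,\mathrm{d}x = \int_\Omega (M-v)\,uv\,\mathrm{d}x, \quad t\in(0,T_{\max}).
\]
This reduces the problem to estimating a zeroth-order quantity in $u$ and $v$ rather than a gradient.

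Next I would bound the right-hand side using only the pointwise information already supplied by Proposition~\ref{0.0-0}, namely $0<v\leqslant M$ and $u>0$. Since $0\leqslant(M-v)v\leqslant M^2$ on $\Omega$ (indeed the sharper bound $M^2/4$ holds, but this is immaterial), the integrand is nonnegative and controlled by $M^2 u$, so that $\int_\Omega(M-v)uv\leqslant M^2\int_\Omega u$. Invoking the mass conservation property (\ref{mass-0}) then gives $\int_\Omega |\nabla v|^2\leqslant M^2\int_\Omega u_0$, which is the claimed bound with $C_{34}:=M^2\int_\Omega u_0$; crucially, this is uniform in $t$ because neither $M$ nor $\int_\Omega u_0$ depends on time.

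There is essentially no analytic obstacle in this lemma, which is why it serves as a convenient starting point for the $\tau=0$ theory: the entire argument is a single integration by parts combined with the maximum-principle bound on $v$ and the conserved mass of $u$. The only point deserving a brief remark is the admissibility of $v-M$ as a test function, which is immediate from the regularity $v\in C^{2,0}(\bar\Omega\times(0,T_{\max}))$ recorded in Proposition~\ref{0.0-0}. I would expect the subsequent lemmas of this section, not this one, to carry the real difficulty.
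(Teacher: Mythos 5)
Your proposal is correct and follows exactly the paper's own argument: the paper likewise tests the elliptic equation with $M-v$, uses the vanishing of $v-M$ on $\partial\Omega$ to drop the boundary term, bounds $uv(M-v)\leqslant M^2 u$ via $0<v\leqslant M$, and concludes with the mass identity (\ref{mass-0}). No differences worth noting.
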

\begin{proof}
Testing the second equation in (\ref{1.0.0}) by $(M-v)$ and integrating by parts, applying the fact that $v \leqslant M$ and (\ref{mass-0}), we see that 
\begin{align*}
  \int_{\Omega} |\nabla v|^2 \mathrm{d}x
  \leqslant \int_{\Omega} uv(M-v)
 \leqslant M^2 \int_{\Omega} u \mathrm{d}x
 =M^2 \int_{\Omega} u_0 \mathrm{d}x,
 \quad t \in (0,T_{\max}),
\end{align*}
which implies (\ref{nabla v}).
\end{proof}
The following lemma provides the $L^p$ estimate of $u$.
\begin{lem}\label{lem-3.1.1}
Let $(u,v)$ be the classical solution of problem $(\ref{1.0.0})$ with $\tau=0$. Suppose that $f(\xi)$ satisfies $(\ref{f-lip})$ and $(\ref{f-global})$ with $\alpha>\frac{n-2}{2n}$ and $K_{0}>0$. Then, for $p >2$ and $u_0$ fulfilling $(\ref{u_0})$, there exists a constant $C_{35}=C_{35}(\alpha,p,K_{0})>0$ such that 
\begin{align}\label{3.1.1.1}
  \int_{\Omega}u^p(x, t) \mathrm{d} x \leqslant C_{35},
   \quad t \in\left(0, T_{\max }\right).
\end{align}
\end{lem}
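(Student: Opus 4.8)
The plan is to derive a single absorptive differential inequality for $\int_\Omega u^p$ by testing the first equation against $u^{p-1}$ and then controlling the chemotactic term by the gradient dissipation together with the a priori bound (\ref{nabla v}). Proceeding as in (\ref{3.2.3.1}), I would multiply the $u$-equation in (\ref{1.0.0}) (with $\tau=0$) by $u^{p-1}$, integrate by parts, and use Young's inequality with the structural hypothesis (\ref{f-global}) to obtain
\[
\frac{d}{dt}\int_\Omega u^p + \frac{2(p-1)}{p}\int_\Omega|\nabla u^{p/2}|^2 \le \frac{p(p-1)K_0^2}{2}\int_\Omega u^p\frac{|\nabla v|^2}{(1+|\nabla v|^2)^{2\alpha}}.
\]
When $\alpha\ge\frac12$ one has $\frac{\xi}{(1+\xi)^{2\alpha}}\le1$, so the right-hand side is dominated by a multiple of $\int_\Omega u^p$ and the reasoning parallels that of Lemma~\ref{lem-3.2.4}; the substantive case is $\frac{n-2}{2n}<\alpha<\frac12$, where $\beta:=2-4\alpha\in(0,\tfrac4n)$ and the elementary bound $\frac{\xi}{(1+\xi)^{2\alpha}}\le\xi^{1-2\alpha}$ reduces matters to estimating the term $\int_\Omega u^p|\nabla v|^{\beta}$.

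The decisive step is to absorb $\int_\Omega u^p|\nabla v|^{\beta}$ into the dissipation. Rather than converting $\nabla v$ into an $L^q$-norm of $u$ via elliptic $W^{2,q}$-regularity — a route that, after Gagliardo–Nirenberg and interpolation, only reaches the weaker threshold $\alpha>\frac{n-2}{2(n-1)}$ — I would apply Hölder's inequality with the conjugate exponents $\frac{2}{2-\beta}$ and $\frac{2}{\beta}$ so that $\nabla v$ appears precisely in $L^2$, where the uniform bound (\ref{nabla v}) from Lemma~\ref{lem-3.1.0} is available:
\[
\int_\Omega u^p|\nabla v|^{\beta}\le\Big(\int_\Omega u^{\frac{2p}{2-\beta}}\Big)^{\frac{2-\beta}{2}}\Big(\int_\Omega|\nabla v|^{2}\Big)^{\frac{\beta}{2}}\le C_{34}^{\beta/2}\,\big\|u^{p/2}\big\|_{L^{4/(2-\beta)}(\Omega)}^{2}.
\]
I would then interpolate $\|u^{p/2}\|_{L^{4/(2-\beta)}}$ by the Gagliardo–Nirenberg inequality between $\|\nabla u^{p/2}\|_{L^2}$ and $\|u^{p/2}\|_{L^{2/p}}$, the latter equalling $(\int_\Omega u_0)^{p/2}$ by mass conservation (\ref{mass-0}). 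The power of $\|\nabla u^{p/2}\|_{L^2}$ produced is $2\theta$ with $\theta<1$ exactly when $\frac{4}{2-\beta}<\frac{2n}{n-2}$ for $n\ge3$ (and for every admissible exponent when $n=2$), which in both cases amounts to $\beta<\frac4n$, i.e. $\alpha>\frac{n-2}{2n}$; Young's inequality then lets me absorb $\varepsilon\|\nabla u^{p/2}\|_{L^2}^2$ into the left-hand side above.

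After absorption I would be left with $\frac{d}{dt}\int_\Omega u^p + c\|\nabla u^{p/2}\|_{L^2}^2\le C$, and a second application of Gagliardo–Nirenberg with (\ref{mass-0}) yields $\int_\Omega u^p\le\frac{c}{2}\|\nabla u^{p/2}\|_{L^2}^2+C$, whence $\frac{d}{dt}\int_\Omega u^p+\int_\Omega u^p\le C$ and (\ref{3.1.1.1}) follows by an elementary ODE comparison. I expect the main obstacle to be the exponent bookkeeping in the decisive step: verifying that the Gagliardo–Nirenberg exponent $\theta$ stays strictly below $1$, which is precisely what pins the critical value at $\frac{n-2}{2n}$, and recognizing that it is the a priori estimate (\ref{nabla v}), not elliptic regularity, that makes this sharp threshold attainable.
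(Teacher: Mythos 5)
Your proposal is correct and follows essentially the same route as the paper: after testing with $u^{p-1}$, the paper likewise applies H\"older's inequality with exponents $\tfrac{1}{2\alpha}$ and $\tfrac{1}{1-2\alpha}$ (identical to your $\tfrac{2}{2-\beta}$, $\tfrac{2}{\beta}$ split) so that $\nabla v$ lands exactly in $L^2$ where Lemma~\ref{lem-3.1.0} applies, then absorbs $\|u^{p/2}\|_{L^{1/\alpha}}^2$ via Gagliardo--Nirenberg and Young under the condition $\alpha>\frac{n-2}{2n}$, and treats $\alpha\geqslant\tfrac12$ exactly as in Lemma~\ref{lem-3.2.4}. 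The exponent bookkeeping you flag checks out and matches the paper's computation.
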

\begin{proof}

\textit{Case 1}: $\alpha \geqslant \frac{1}{2}$. Its proof is the same as Lemma~\ref{lem-3.2.4}.

\textit{Case 2}:  $\frac{n-2}{2n}<\alpha<\frac{1}{2}$ and $\tau=0$. Applying Hölder's inequality and Lemma~\ref{lem-3.1.0}, it follows from (\ref{3.2.3.1}) that
\begin{align}\label{+u^p-small-0}
  \frac{\mathrm{d}}{\mathrm{d} t} \int_{\Omega}u^p \mathrm{d}x 
  \leqslant & -\frac{2(p-1)}{p} \int_{\Omega}|\nabla u^{\frac{p}{2}}|^2 \mathrm{d}x 
            +\frac{p(p-1)}{2} K^2_{0} \int_{\Omega} u^p |\nabla v|^{2-4\alpha}\mathrm{d}x \notag \\
  \leqslant & -\frac{2(p-1)}{p} \int_{\Omega}|\nabla u^{\frac{p}{2}}|^2 \mathrm{d}x 
             +\frac{p(p-1)}{2} K^2_{0}\big(\int_{\Omega} u^{\frac{p}{2\alpha}} \mathrm{d}x \big)^{2\alpha} \big( \int_{\Omega}|\nabla v|^2 \mathrm{d}x \big)^{1-2\alpha} \notag\\
  \leqslant & -\frac{2(p-1)}{p} \int_{\Omega}|\nabla u^{\frac{p}{2}}|^2 \mathrm{d}x 
             +C_{34}^{1-2\alpha}\frac{p(p-1)}{2} K^2_{0} \big(\int_{\Omega} u^{\frac{p}{2\alpha}} \mathrm{d}x \big)^{2\alpha}
\end{align}
for all $t \in (0,T_{\max})$. Since $\alpha>\frac{n-2}{2n}$ implies $\frac{2n(p-2\alpha)}{pn+2-n}<2$, we use Young's inequality and Gagliardo-Nirenberg inequality to pick constants $C_{36}=C_{36}(p,\alpha,K_{0})>0$ and $C_{37}=C_{37}(p,\alpha,K_{0})>0$ such that
\begin{align}\label{G-N-1-1}
C_{34}^{1-2\alpha}\frac{p(p-1)}{2} K^2_{0}\Big( \int_{\Omega} u^{\frac{p}{2\alpha}} \mathrm{d}x \Big)^{2\alpha} 
= & C_{34}^{1-2\alpha}\frac{p(p-1)}{2} K^2_{0}\|u^{\frac{p}{2}}\|^{2}_{L^{\frac{1}{\alpha}}(\Omega)} \notag \\
  \leqslant & C_{36} \|\nabla u^{\frac{p}{2}}\|^{\frac{2n(p-2\alpha)}{pn+2-n}}_{L^2(\Omega)} \|u^{\frac{p}{2}}\|^{\frac{2(2-n+2n\alpha) }{pn+2-n}}_{L^{\frac{2}{p}}(\Omega)}
              +C_{36} \|u^{\frac{p}{2}}\|^{2}_{L^{\frac{2}{p}}(\Omega)} \notag\\
  \leqslant & \frac{p-1}{p} \|\nabla u^{\frac{p}{2}}\|^{2}_{L^2(\Omega)} +C_{37}.
\end{align}
Similarly, we have 
\begin{align}\label{G-N-1-2}
\int_{\Omega} u^p  \mathrm{d}x  \leqslant & \frac{p-1}{p} \|\nabla u^{\frac{p}{2}}\|^{2}_{L^2(\Omega)} +C_{37}.
\end{align}
Combining (\ref{+u^p-small-0}) with (\ref{G-N-1-1}) and (\ref{G-N-1-2}), we have
\begin{align}\label{reslut-3}
\frac{\mathrm{d}}{\mathrm{d} t} \int_{\Omega}u^p \mathrm{d}x + \int_{\Omega}u^p \mathrm{d}x \leqslant 2C_{37}
\end{align}
for all $t \in (0,T_{\max})$, which implies (\ref{3.1.1.1}).
\end{proof}

Applying the standard Moser-type iterative argument, we finally obtain the $L^{\infty}$ estimate of $u$.\\
\emph{\textbf{Proof of Theorem \ref{0.2}.}} For any $p \geqslant 2$, by Lemma~\ref{lem-3.1.1} and the standard elliptic regularity theory, we can find a constant $C_{38}(\alpha,p,K_{0})>0$ such that 
\begin{align*}
  \|\nabla v(\cdot, t)\|_{L^{\infty}(\Omega)} \leqslant C_{38}(\alpha,p,K_{0}), 
  \quad t \in\left(0, T_{\max }\right).
\end{align*}
Based on this and Lemma~\ref{lem-3.1.1}, the statement of Theorem \ref{0.2} can be established through a Moser-type iterative argument.
\hfill$\Box$\\
\vskip 3mm
\section{Blow-up when $\alpha <0$, $n=2$ and $\tau=0$}\label{section 1}
In this section, we aim to prove Theorem~\ref{0.1}, and assume that $\Omega =B_R(0) \subset \mathbb{R}^2$ and $\tau=0$. In the spirit of \cite{2023-CVPDE-AhnWinkler, 2023-PRSESA-WangWinkler, 2022-IUMJ-Winkler}, we introduce the mass distribution function 
\begin{align}\label{lem-2.1.2.1}
  w(s, t):=\int_0^{\sqrt{s}}\rho u \left(\rho, t \right) \mathrm{d}\rho, 
  \quad  (s,t) \in\left[0, R^2\right] \times \left[0, T_{\max }\right).
\end{align}
According to (\ref{1.0.0}) and (\ref{lem-2.1.2.1}), $w$ satisfies the following Dirichlet problem
\begin{align}\label{lem-2.1.2.3}
  \left\{
  \begin{array}{ll}
    w_t(s, t)=4 s w_{s s}(s, t)+ 2\sqrt{s} w_s(s, t) f\big(v_r^{2}\left(\sqrt{s}, t\right)\big) v_r\big(\sqrt{s}, t\big), 
    & s \in\left(0, R^2\right),\ t \in\left(0, T_{\max }\right), \\
    w(0, t)=0, \quad w\left(R^2, t\right)=\frac{m}{2\pi}, 
    & t \in\left[0, T_{\max }\right),\\
    w(s, 0)=w_0(s), 
    & s \in\left(0, R^2\right),
  \end{array}
  \right.
\end{align}
where $m:=\int_{\Omega} u_0 \mathrm{d} x$ and $w_0(s)=\int_0^{\sqrt{s}} \rho u_0(\rho) \mathrm{d} \rho$.
By Proposition~\ref{0.0-0}, we know that $w \in C^0\left(\left[0, T_{\max }\right) ; C^1\left(\left[0, R^2\right]\right) \cap C^{2,1}\left(\left(0, R^2\right] \times
\left(0, T_{\max }\right)\right)\right.$ satisfies
\begin{align}\label{lem-2.1.2.2}
  w_s(s, t)=\frac{1}{2} u\big(\sqrt{s}, t\big), 
  \quad (s,t) \in\left[0, R^2\right] \times \left[0, T_{\max }\right).
\end{align}

The following two lemmas introduce some useful estimates to derive the lower bound for $w_t$. The estimate of $r v_r(r,t)$ can be derived by a ODE comparison argument, referring to \cite[Lemma 3.1]{2023-PRSESA-WangWinkler} for details
of the proof. 
\begin{lem}\label{lem2.2.1} 
Suppose that $f$ and radially symmetric $u_0$ satisfy $(\ref{f-lip})$ and $(\ref{u_0})$, then 
\begin{align}\label{lem2.2.1.1}
  r v_r(r, t) \geqslant \frac{U(r, t) v(r, t)}{1+\int_0^r \frac{U \left(\rho, t\right)} {\rho} \mathrm{d} \rho}, 
  \quad (r,t) \in(0, R) \times \left(0, T_{\max }\right) \text {, }
\end{align}
where 
\begin{align*}
  U(r, t):=w \big(r^2, t\big), 
  \quad (r,t) \in[0, R] \times \left[0, T_{\max }\right).
\end{align*}
\end{lem}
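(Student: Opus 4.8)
The plan is to fix $t \in (0, T_{\max})$ and reduce the statement to an ODE comparison in the radial variable $r$, using the smoothness of the radial solution granted by Proposition~\ref{0.0-0}. First I would rewrite the elliptic equation $0 = \Delta v - uv$ in radial coordinates: since $n=2$ we have $\Delta v = v_{rr} + \frac{1}{r}v_r$, and multiplying the identity $v_{rr} + \frac{1}{r}v_r = uv$ by $r$ turns it into the divergence-form relation
\begin{align*}
(r v_r)_r = r u v, \quad r \in (0,R).
\end{align*}
Abbreviating $y(r) := r v_r(r,t)$, this reads $y' = ruv$ with $y(0)=0$ (by radial smoothness at the origin), so in particular $y \geq 0$. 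I would also record $v' = y/r$ together with $U(r,t) = \int_0^r \rho u(\rho,t)\,\mathrm{d}\rho$, whence $U' = ru$ and $U \geq 0$.

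Next I would set $Q(r) := 1 + \int_0^r \frac{U(\rho,t)}{\rho}\,\mathrm{d}\rho$, so that $Q' = U/r$ and $Q \geq 1$, and note that the claim is equivalent to $yQ \geq Uv$. The heart of the argument is to introduce the auxiliary quantity
\begin{align*}
G(r) := y(r)Q(r) - U(r)v(r)
\end{align*}
and differentiate it. Using $y' = ruv$, $Q' = U/r$, $U' = ru$ and $v' = y/r$, I expect the two cross terms to cancel exactly, leaving
\begin{align*}
G' = ruv\,Q + y\cdot\tfrac{U}{r} - ru\cdot v - U\cdot\tfrac{y}{r} = ruv\,(Q - 1).
\end{align*}
Since $Q - 1 = \int_0^r U/\rho\,\mathrm{d}\rho \geq 0$ and $r,u \geq 0$ while $v>0$ by Proposition~\ref{0.0-0}, this yields $G' \geq 0$.

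To finish, I would evaluate $G$ at the origin: as $r \to 0$ both $y$ and $U$ vanish and $Q \to 1$, so $G(0)=0$; together with $G' \geq 0$ this forces $G \geq 0$ on $(0,R)$, which is exactly $r v_r Q \geq Uv$, i.e. (\ref{lem2.2.1.1}) after dividing by $Q>0$. The one genuinely delicate point is anticipating the correct comparison function: the precise denominator $1+\int_0^r U/\rho\,\mathrm{d}\rho$ is forced by the demand that the terms $yU/r$ and $Uy/r$ in $G'$ cancel, thereby reducing $G'$ to a manifestly nonnegative expression. Everything else is routine bookkeeping, and this monotonicity scheme matches the ODE comparison strategy of Wang \& Winkler cited before the statement.
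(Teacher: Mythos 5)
Your proof is correct and amounts to the same ODE comparison argument that the paper delegates to Wang \& Winkler: the quantity $G = (rv_r)Q - Uv$ with $Q = 1+\int_0^r U(\rho,t)\rho^{-1}\,\mathrm{d}\rho$ is exactly the integrated form of comparing $rv_r$ (which solves $(rv_r)_r = ruv$ with value $0$ at $r=0$) against the subsolution $Uv/Q$. The cancellation in $G' = ruv(Q-1)\geqslant 0$ and the vanishing of $G$ at the origin are verified correctly, so the argument is complete.
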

Considering Lemma~\ref{lem2.2.1}, we estimate $v$ from below which detailed proof can be found in \cite[ Lemma 2.4]{2023-CVPDE-AhnWinkler}.

\begin{lem}\label{lem-2.2.2}
Suppose that $f$ and radially symmetric $u_0$ satisfy $(\ref{f-lip})$ and $(\ref{u_0})$, then
\begin{align}\label{lem-2.2.2.1}
  v(r, t) \geqslant M \exp \Big[-\big(\frac{m}{2 \pi} \cdot \ln \frac{R}{r}\big)^{\frac{1}{2}}\Big], 
  \quad (r,t) \in (0, R] \times \left(0, T_ {\max }\right).
\end{align}
\end{lem}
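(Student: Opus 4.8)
The plan is to reduce the estimate to a one–dimensional ODE analysis in the radial variable and then to extract the square–root exponent by a single application of the Cauchy–Schwarz inequality. Throughout, fix $t \in (0, T_{\max})$ and write $v = v(\cdot, t) > 0$ for the radial classical solution guaranteed by Proposition~\ref{0.0-0}, which satisfies $v_{rr} + \frac{1}{r} v_r = u v$ on $(0, R)$ together with $v(R) = M$. Since $(r v_r)_r = r u v \geq 0$ and $r v_r \to 0$ as $r \to 0$, I would first record that $v_r \geq 0$, so that $v$ is nondecreasing in $r$ with $v \leq M$ and $\ln v$ is well defined.

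First I would introduce the two radial quantities
\[
P(r) := \frac{r v_r(r)}{v(r)} = r\, \partial_r \ln v(r), \qquad U(r) := \int_0^r \rho\, u(\rho)\, \mathrm{d}\rho = w(r^2, t),
\]
so that $U(0) = 0$, $U(R) = \frac{m}{2\pi}$ by the boundary condition in (\ref{lem-2.1.2.3}), and $U_r = r u$ by (\ref{lem-2.1.2.2}). Rewriting the radial equation for $\psi := \ln v$ as $\psi_{rr} + \frac{1}{r}\psi_r + \psi_r^2 = u$ and multiplying by $r$ yields the Riccati-type identity
\[
P_r = U_r - \frac{P^2}{r}, \qquad r \in (0, R),
\]
with $P(0) = 0$. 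From $P_r \leq U_r$ together with $P(0) = U(0) = 0$ I would then deduce the pointwise upper bound $0 \leq P(r) \leq U(r) \leq \frac{m}{2\pi}$.

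The decisive step is to integrate the Riccati identity in the form $\frac{P^2}{\rho} = U_\rho - P_\rho$ over $(r, R)$, which, using $P \geq 0$ and $P(r) \leq U(r)$, gives
\[
\int_r^R \frac{P^2(\rho)}{\rho}\, \mathrm{d}\rho = \big(U(R) - U(r)\big) - \big(P(R) - P(r)\big) \leq U(R) = \frac{m}{2\pi}.
\]
Since $\partial_\rho \ln v = \frac{P(\rho)}{\rho}$ and $v(R) = M$, one has $\ln \frac{M}{v(r)} = \int_r^R \frac{P(\rho)}{\rho}\, \mathrm{d}\rho$, so Cauchy–Schwarz yields
\[
\ln \frac{M}{v(r)} \leq \left(\int_r^R \frac{P^2(\rho)}{\rho}\, \mathrm{d}\rho\right)^{\frac{1}{2}} \left(\int_r^R \frac{\mathrm{d}\rho}{\rho}\right)^{\frac{1}{2}} \leq \left(\frac{m}{2\pi}\, \ln \frac{R}{r}\right)^{\frac{1}{2}},
\]
and exponentiating gives exactly (\ref{lem-2.2.2.1}).

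I expect the only genuinely delicate point to be the analysis at the origin: one must justify that $P$ extends continuously to $r = 0$ with $P(0) = 0$ (which follows from radial smoothness giving $v_r(0) = 0$ and $v(0) > 0$) and that the integrals converge, so that both the comparison $P \leq U$ and the integral identity are legitimate; everything else is elementary. Note that the lower bound of Lemma~\ref{lem2.2.1} is complementary here, since it controls $v$ from \emph{above}, whereas the upper bound $P \leq U$ that drives the lower estimate on $v$ comes directly from the nonnegative sign of $P^2/r$ in the Riccati identity.
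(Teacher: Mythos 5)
Your argument is correct, and it is essentially the proof that the paper delegates to \cite[Lemma 2.4]{2023-CVPDE-AhnWinkler}: the Riccati identity $P_r+\frac{P^2}{r}=U_r$ for $P=r(\ln v)_r$, the resulting bound $\int_r^R \frac{P^2(\rho)}{\rho}\,\mathrm{d}\rho\leqslant \frac{m}{2\pi}$, and the Cauchy--Schwarz step that produces the square-root exponent are exactly the mechanism behind the cited lemma. All the auxiliary facts you flag (positivity and monotonicity of $v$, $v_r(0)=0$, hence $P(0)=U(0)=0$) are supplied by Proposition~\ref{0.0-0} and the radial form of the elliptic equation, so the proof is complete as written.
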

The following estimate of $w_t$ comes from the above lemmas. 
\begin{lem}\label{lem2.2.3}
Suppose that radially symmetric $u_0$ satisfies $(\ref{u_0})$. Assume that $f(\xi)$ satisfies $(\ref{f-lip})$ and $(\ref{f-blow-up})$ with  $\alpha<0$ and $k>0$. Then, for any $\delta>0$, there exists a constant $C_{39}=C_{39}(\delta, R) >0$, such that
\begin{align}\label{wt}
  w_t(s,t) \geqslant 4sw_{s s}(s,t) 
  + M^{1-2\alpha} kC_{39} 
  \cdot s^{\delta(1-2\alpha)+\alpha} \frac{ w^{1-2\alpha}(s,t) w_s(s,t) }{\big({1+\frac{1}{2} \int_0^{s} \frac{ w(\sigma, t)}{\sigma} \mathrm{d} \sigma}\big)^{1-2\alpha}}
\end{align}
for all $s \in \left(0, R^2\right)$ and $t \in \left(0, T_{\max }\right)$.
\end{lem}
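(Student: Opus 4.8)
The plan is to work entirely with the mass distribution function $w$ and its parabolic equation (\ref{lem-2.1.2.3}), namely $w_t=4sw_{ss}+2\sqrt{s}\,w_s\,f\big(v_r^2(\sqrt s,t)\big)\,v_r(\sqrt s,t)$. Since both sides of (\ref{wt}) carry the common nonnegative factor $w_s=\tfrac12 u(\sqrt s,\cdot)\geqslant 0$, it suffices to establish the pointwise transport-coefficient bound
\[
2\sqrt{s}\,f\big(v_r^2(\sqrt s,t)\big)\,v_r(\sqrt s,t)\;\geqslant\; M^{1-2\alpha}kC_{39}\, s^{\delta(1-2\alpha)+\alpha}\frac{w^{1-2\alpha}(s,t)}{\big(1+\tfrac12\int_0^s\frac{w(\sigma,t)}{\sigma}\,d\sigma\big)^{1-2\alpha}}
\]
and then multiply through by $w_s$. (Where $v_r=0$, Lemma~\ref{lem2.2.1} forces $w=0$, so both sides vanish and the estimate is trivial.)

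First I would use the sign of $\alpha$. Because $\alpha<0$ the exponent $1-2\alpha$ exceeds $1$, and (\ref{f-blow-up}) together with $1+v_r^2\geqslant v_r^2$ gives $f(v_r^2)\geqslant k(1+v_r^2)^{-\alpha}\geqslant k\,v_r^{-2\alpha}$ (recall $v_r\geqslant 0$ from Lemma~\ref{lem2.2.1}), whence $f(v_r^2)v_r\geqslant k\,v_r^{\,1-2\alpha}$. This superlinear feedback is exactly the effect of the anti-limitation regime $\alpha<0$. Next I would insert the lower bound of Lemma~\ref{lem2.2.1}: the substitution $\sigma=\rho^2$ turns its denominator $1+\int_0^r U(\rho,t)/\rho\,d\rho$ into precisely $1+\tfrac12\int_0^s w(\sigma,t)/\sigma\,d\sigma$, and with $r=\sqrt s$ one gets $v_r\geqslant r^{-1}w(s,t)v(\sqrt s,t)\big/\big(1+\tfrac12\int_0^s\cdots\big)$. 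Raising to the power $1-2\alpha>0$, multiplying by $2\sqrt s\,k$, and collecting powers of $r=\sqrt s$ (note $\sqrt s\cdot r^{-(1-2\alpha)}=r^{2\alpha}=s^{\alpha}$) produces the factor $2k\,s^{\alpha}$, leaving $w^{1-2\alpha}$, the desired denominator, and one extra factor $v^{1-2\alpha}(\sqrt s,t)$.

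Absorbing $v^{1-2\alpha}$ is the only real work, and the step I expect to be the main obstacle. Lemma~\ref{lem-2.2.2} gives $v(\sqrt s,t)\geqslant M\exp\big[-(\tfrac{m}{2\pi}\ln\tfrac{R}{r})^{1/2}\big]$, so $v^{1-2\alpha}\geqslant M^{1-2\alpha}\exp\big[-(1-2\alpha)(\tfrac{m}{2\pi}\ln\tfrac{R}{r})^{1/2}\big]$, and I must bound this below by a constant times $s^{\delta(1-2\alpha)}=r^{2\delta(1-2\alpha)}$ for arbitrarily small $\delta>0$. Setting $L:=\ln(R/r)\in[0,\infty)$ and taking logarithms, this reduces to showing that $2\delta(1-2\alpha)L-(1-2\alpha)\sqrt{m/2\pi}\,\sqrt L$ is bounded below on $[0,\infty)$: writing $x=\sqrt L$, the map $x\mapsto a x^2-bx$ with $a=2\delta(1-2\alpha)>0$ and $b=(1-2\alpha)\sqrt{m/2\pi}>0$ attains its minimum $-b^2/4a$, so the sub-logarithmic decay $\exp[-c\sqrt{\ln(R/r)}]$ indeed dominates every power $r^{2\delta(1-2\alpha)}$ as $r\to 0$. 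This furnishes a constant $\tilde C=\tilde C(\delta,R)>0$ (the fixed data $m,\alpha$ being suppressed) with $v^{1-2\alpha}\geqslant M^{1-2\alpha}\tilde C\,s^{\delta(1-2\alpha)}$; taking $C_{39}=2\tilde C$ and reassembling the chain of inequalities yields the pointwise bound above, and multiplication by $w_s\geqslant 0$ gives (\ref{wt}). The heart of the argument is precisely that this $\exp[-c\sqrt{\ln(R/r)}]$ decay lets one trade the unknown pointwise lower bound for $v$ against an arbitrarily small power of $s$, which is what ultimately makes $\alpha=0$ the sharp threshold.
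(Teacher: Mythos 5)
Your proposal is correct and follows essentially the same route as the paper: the bound $f(v_r^2)v_r\geqslant k\,v_r^{1-2\alpha}$ from (\ref{f-blow-up}), the $v_r$ lower bound of Lemma~\ref{lem2.2.1} rewritten via $\sigma=\rho^2$, the pointwise lower bound for $v$ from Lemma~\ref{lem-2.2.2}, and the trade of $\exp[-c\sqrt{\ln(R/r)}]$ against $s^{\delta(1-2\alpha)}$ (your quadratic minimization in $x=\sqrt{\ln(R/r)}$ is exactly the paper's Young-inequality step yielding $C_{39}=2R^{-2\delta(1-2\alpha)}e^{-m(1-2\alpha)/(16\pi\delta)}$). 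Your reorganization as a pointwise bound on the transport coefficient, multiplied afterwards by $w_s\geqslant 0$, is only a cosmetic difference.
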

\begin{proof}
By means of Young's inequality, for any $\delta>0$, we have
\begin{align*}
  \left(\frac{m}{2 \pi} \cdot \ln \frac{R}{\sqrt{s}}\right)^{\frac{1}{2}} 
  \leqslant & 2\delta \ln \frac{R}{\sqrt{s}}+\frac{m}{16\pi \delta} \notag\\
  = &\ln \frac{R^{2\delta}}{s^{\delta}}+\frac{m}{16\pi\delta}.
\end{align*}
Inserting this into (\ref{lem-2.2.2.1}), we deduce that
\begin{align}\label{lem-2.2.2.2}
  v(\sqrt{s}, t) \geqslant M R^{-2\delta} e^{-\frac{m}{16 \pi \delta}} s^{\delta},  
  \quad (s,t) \in \left(0, R^2\right) \times \left(0, T_{\max }\right) \text {. }
\end{align}
According to the definitions of $U$ and $w$, we obtain
\begin{align}\label{lem2.2.3.3}
  \int_0^{\sqrt s} \frac{U(\rho, t)} {\rho} \mathrm{d} \rho
  =\frac{1}{2} \int_0^s \frac{w(\sigma, t)}{\sigma} \mathrm{d} \sigma.
\end{align}
By Lemma~\ref{lem2.2.1}, (\ref{lem-2.2.2.2}), (\ref{lem2.2.3.3}) and the positivity of $w$, we derive that
\begin{align}\label{lem2.2.3.2}
  v_r \big(\sqrt {s} ,t\big) 
  \geqslant &M R^{-2\delta} e^{-\frac{m}{16 \pi \delta}} \frac{s^{\delta-\frac{1}{2}} w(s, t) }{1+\frac{1}{2} 
    \int_0^s \frac{w(\sigma, t)}{\sigma} \mathrm{d} \sigma},
    \quad (s,t) \in \left(0, R^2\right) \times \left(0, T_{\max }\right) \text {. }
\end{align} 
Using (\ref{f-blow-up}), and inserting (\ref{lem2.2.3.2}) into (\ref{lem-2.1.2.3}), there exists a constant $C_{39}(m,R,\delta)=2R^{-2\delta(1-2\alpha)}e^{-\frac{m(1-2\alpha)}{16\pi \delta}}>0$ such that
\begin{align}\label{w-n}
  w_t(s, t) 
  \geqslant &4 s w_{s s}(s, t)+ 2k\sqrt{s} w_s(s, t) \big(1+v_r^2(\sqrt{s}, t)\big)^{-\alpha}v_r(\sqrt{s}, t) \notag\\
  \geqslant &4 s w_{s s}(s, t)+ 2k\sqrt{s} w_s(s, t) v_r^{1-2\alpha}\big(\sqrt{s}, t\big) \notag\\
  \geqslant &4sw_{s s}(s,t) + M^{1-2\alpha} k_fC_{39}  \cdot \frac{ s^{\delta(1-2\alpha)+\alpha} w^{1-2\alpha}(s,t) w_s(s,t) }{\big({1+\frac{1}{2} \int_0^{s} \frac{ w(\sigma, t)}{\sigma} \mathrm{d} \sigma}\big)^{1-2\alpha}}
\end{align}
for all $s \in \left(0, R^2\right)$ and $t \in \left(0, T_{\max }\right)$. We complete our proof.
\end{proof}
The method of detecting blow-up depends on a differential inequality of a moment-type functional $\phi(t)$. For any given $\gamma=\gamma\left(\alpha\right) \in(0,1)$, we define such a positive functional
\begin{align}\label{2.3.0.1}
  \phi(t):=\int_0^{R^2} s^{-\gamma} w(s, t) \mathrm{d} s, 
  \quad t \in\left[0, T_{\max }\right),
\end{align}
which is well-defined and belongs to $C^0\left(\left[0, T_{\max }\right)\right) \cap C^1\left(\left(0, T_{\max }\right)\right)$. To prepare our subsequent analysis of $\phi(t)$, given $\gamma \in (0,1)$, we further introduce an auxiliary functional 
\begin{align}\label{2.3.0.2}
  \psi(t):=\int_0^{R^2}  s^{\delta(1-2\alpha)+\alpha-\gamma-1} w^{2-2\alpha}(s,t)   \mathrm{d}s, 
  \quad t \in\left[0, T_{\max }\right)
\end{align} 
 $\psi \in C^0 \left[ 0,T_{max} \right)$.

In the following lemma, we establish a basic differential inequality of the moment-type functional $\phi(t)$.
\begin{lem}\label{lem-2.3.2}
Suppose that radially symmetric $u_0$ satisfies $(\ref{u_0})$. Assume that $f(\xi)$ satisfies $(\ref{f-lip})$ and $(\ref{f-blow-up})$ with $\alpha<0$ and $k>0$. Then, for any given $\delta=\delta(\alpha) \in \big(0,\frac{-\alpha}{1-2\alpha}\big)$ and $\gamma=\gamma(\delta) \in (0,1)$, there exists a constant $C_{40}=C_{40}(\delta, R)>0$, such that the functions $\phi(t)$ and $\psi(t)$ satisfy
\begin{align}\label{lem-2.3.2.1}
  \phi^{\prime}(t) 
  \geqslant \frac{M^{1-2\alpha}}{C_{40}} \cdot \frac{\psi(t)}{1+\psi^{\frac{1-2\alpha}{2-2\alpha}}(t)}
       -C_{40} \psi^{\frac{1}{2-2\alpha}}(t) -C_{40},
  \quad t\in (0,T_{\max})
\end{align}
and 
\begin{align}\label{lem-2.3.2.2}
  \phi(t) \leqslant C_{40} \psi^{\frac{1}{2-2\alpha}}(t),
  \quad t \in\left(0, T_{\max }\right).
\end{align}
\end{lem}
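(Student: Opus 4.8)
```latex
The plan is to establish both inequalities by a direct computation starting from
the differential inequality for $w_t$ obtained in Lemma~\ref{lem2.2.3}, and then
to control all arising integrals purely in terms of $\psi(t)$. For the first
inequality (\ref{lem-2.3.2.1}), I would differentiate $\phi(t)=\int_0^{R^2}
s^{-\gamma} w(s,t)\,\mathrm{d}s$ under the integral sign, substitute the lower
bound (\ref{wt}) for $w_t$, and split the result into a diffusion contribution
coming from $4sw_{ss}$ and a chemotactic contribution coming from the nonlinear
term. The diffusion term $\int_0^{R^2} s^{-\gamma}\cdot 4s\,w_{ss}\,\mathrm{d}s$
should be handled by integrating by parts twice in $s$, using the Dirichlet data
$w(0,t)=0$, $w(R^2,t)=\tfrac{m}{2\pi}$ from (\ref{lem-2.1.2.3}) together with the
boundary behaviour $w_s=\tfrac12 u\geqslant 0$ from (\ref{lem-2.1.2.2}); the aim
is to show that, for a suitable range of $\gamma\in(0,1)$, the boundary
contributions are bounded by a constant and the remaining volume term is
nonnegative or at worst contributes a term absorbable into $-C_{40}$. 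This is
where the constraint $\gamma\in(0,1)$ is used: the weight $s^{-\gamma}$ must be
integrable at $s=0$ while the exponents produced by integrating $s^{1-\gamma}$
twice must stay controllable at both endpoints.

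The main engine of (\ref{lem-2.3.2.1}) is the nonlinear term. After inserting
(\ref{wt}), this term equals a positive constant multiple of
\[
\int_0^{R^2} s^{-\gamma}\cdot s^{\delta(1-2\alpha)+\alpha}
\frac{w^{1-2\alpha}(s,t)\,w_s(s,t)}
{\bigl(1+\tfrac12\int_0^s \tfrac{w(\sigma,t)}{\sigma}\,\mathrm{d}\sigma\bigr)^{1-2\alpha}}
\,\mathrm{d}s .
\]
The key idea I would use is that $w^{1-2\alpha}w_s$ is proportional to
$\partial_s\bigl(w^{2-2\alpha}\bigr)$, so that an integration by parts in $s$
converts this into the integral defining $\psi(t)$ from (\ref{2.3.0.2}), namely
$\int_0^{R^2} s^{\delta(1-2\alpha)+\alpha-\gamma-1}w^{2-2\alpha}\,\mathrm{d}s$,
up to the power-weight factor and the denominator. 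The denominator
$\bigl(1+\tfrac12\int_0^s\tfrac{w}{\sigma}\bigr)^{1-2\alpha}$ is monotone
nondecreasing in $s$ and globally bounded below by $1$; to get the claimed form
$\psi/(1+\psi^{(1-2\alpha)/(2-2\alpha)})$ I would bound this denominator from
above by $\bigl(1+C\,\phi(t)\bigr)^{1-2\alpha}$ via a Hardy/weight estimate on
$\int_0^s\tfrac{w}{\sigma}$ in terms of $\phi$, and then use
(\ref{lem-2.3.2.2}) to replace $\phi$ by $\psi^{1/(2-2\alpha)}$, producing
exactly the exponent $(1-2\alpha)/(2-2\alpha)$ in the denominator. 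The boundary
terms generated by this integration by parts, together with the diffusion
remainder, should be estimated by $C_{40}\psi^{1/(2-2\alpha)}(t)+C_{40}$ using
H\"older's inequality; this is the origin of the two negative terms on the right
of (\ref{lem-2.3.2.1}).

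For the comparison inequality (\ref{lem-2.3.2.2}) I would argue directly:
$\phi(t)=\int_0^{R^2}s^{-\gamma}w\,\mathrm{d}s$ is an integral of the first power
of $w$ against a weight, while $\psi(t)$ involves $w^{2-2\alpha}$ against a
different weight, so a single application of H\"older's inequality with exponents
$2-2\alpha$ and its conjugate should give $\phi\leqslant C_{40}
\psi^{1/(2-2\alpha)}$, provided the residual weight integral
$\int_0^{R^2}s^{\beta}\,\mathrm{d}s$ produced by the H\"older split is finite.
That finiteness is exactly the place where the hypotheses
$\delta\in\bigl(0,\tfrac{-\alpha}{1-2\alpha}\bigr)$ and $\gamma\in(0,1)$ are
needed: they must be chosen so that the weight exponent stays above $-1$ near
$s=0$. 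The hard part will be bookkeeping the exact exponents so that all the
endpoint integrals at $s=0$ converge and the two estimates close consistently;
in particular, verifying that the admissible ranges for $\delta$ and $\gamma$
stated in the lemma are precisely those for which both the H\"older residual
integral and the integration-by-parts boundary terms remain finite, and that the
denominator bound via $\phi$ feeds back correctly through (\ref{lem-2.3.2.2}), is
the delicate point rather than any single estimate.
```
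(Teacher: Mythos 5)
Your overall architecture matches the paper's proof: differentiate $\phi$, insert the pointwise lower bound (\ref{wt}), integrate the diffusion term $4s^{1-\gamma}w_{ss}$ by parts twice (using $w_s\geqslant 0$ and the Dirichlet data to control boundary terms, and H\"older against $\psi$ to bound the resulting volume term $-4\gamma(1-\gamma)\int_0^{R^2}s^{-\gamma-1}w\,\mathrm{d}s$ by $-C\psi^{1/(2-2\alpha)}$), recognize $w^{1-2\alpha}w_s=\frac{1}{2-2\alpha}\partial_s(w^{2-2\alpha})$ to produce $\psi$ from the chemotactic term, and obtain (\ref{lem-2.3.2.2}) by a single H\"older with exponents $2-2\alpha$ and $\frac{2-2\alpha}{1-2\alpha}$, with the admissible $\gamma$-range dictated by finiteness of the residual weight integrals. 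All of that is correct and is exactly what the paper does.

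There is, however, one step in your plan that fails as stated: the bound on the denominator $\bigl(1+\tfrac12\int_0^s\tfrac{w(\sigma,t)}{\sigma}\,\mathrm{d}\sigma\bigr)^{1-2\alpha}$ via ``a Hardy/weight estimate on $\int_0^s\tfrac{w}{\sigma}$ in terms of $\phi$.'' No such estimate holds with a constant independent of $w$: the weight $\sigma^{-1}$ is strictly more singular at $\sigma=0$ than the weight $\sigma^{-\gamma}$ (recall $\gamma<1$) appearing in $\phi$. For instance, a nondecreasing profile with $w(\sigma)\sim 1/\ln(1/\sigma)$ near $\sigma=0$ gives $\int_0^s\tfrac{w}{\sigma}\,\mathrm{d}\sigma=+\infty$ while $\phi$ remains finite, so $\int_0^s\tfrac{w}{\sigma}\leqslant C\phi$ cannot hold uniformly. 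The paper avoids this by estimating $\int_0^s\tfrac{w}{\sigma}\,\mathrm{d}\sigma$ \emph{directly} against $\psi$ via H\"older with exponents $\frac{2-2\alpha}{1-2\alpha}$ and $2-2\alpha$: the $w^{2-2\alpha}$-weight in $\psi$ carries an extra factor $\sigma^{-1}$ (its exponent is $\delta(1-2\alpha)+\alpha-\gamma-1$), which exactly absorbs the $\sigma^{-1}$ singularity, and the complementary weight integral is finite precisely because $\gamma>\delta(1-2\alpha)+\alpha$ (inequality (\ref{<-1-2})). This yields $\int_0^s\tfrac{w}{\sigma}\,\mathrm{d}\sigma\leqslant C_{42}\psi^{\frac{1}{2-2\alpha}}(t)$ and hence the denominator bound $C\bigl(1+\psi^{\frac{1-2\alpha}{2-2\alpha}}(t)\bigr)$ you are aiming for, without any detour through $\phi$ or through (\ref{lem-2.3.2.2}). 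With that substitution your argument closes; as written, the Hardy step is a genuine gap.
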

\begin{proof}
Due to $0<\delta<\frac{-\alpha}{1-2\alpha}$, we deduce that
\begin{align*}
 \delta(1-2\alpha)+\alpha -\frac{-\alpha-\delta(1-2\alpha)}{1-2\alpha} =(\delta(1-2\alpha)+\alpha)\frac{2-2\alpha}{1-2\alpha}<0.
\end{align*}
By $\alpha<0$, we have
\begin{align*}
 \delta(1-2\alpha)+\alpha < \delta(1-2\alpha)+2-\alpha.
\end{align*}
Thus, we can fix $\gamma=\gamma(\alpha) \in (0,1)$ fulfilling 
\begin{align}\label{ga}
  \delta(1-2\alpha)+\alpha<\gamma< \min\left\{\frac{-\alpha-\delta(1-2\alpha)}{1-2\alpha},\delta(1-2\alpha)+2-\alpha\right\}.
\end{align}
According to $\gamma< \frac{-\alpha-\delta(1-2\alpha)}{1-2\alpha}$ in (\ref{ga}) and $\alpha<0$, we find that
\begin{align}\label{>-1-1}
 &\frac{-\alpha+\gamma+1-\delta(1-2\alpha)-\gamma(2-2\alpha)}{1-2\alpha} \notag \\
 >&\frac{-\alpha+\gamma+1-\delta(1-2\alpha)-(\gamma+1)(2-2\alpha)}{1-2\alpha} \notag \\
 = &\frac{-\alpha}{1-2\alpha}-(\gamma-1+\delta) \notag \\
 >&-1.
\end{align}
Noting that the assumption $\gamma>\delta(1-2\alpha)+\alpha$ in (\ref{ga}) warrants that 
\begin{align}\label{<-1-2}
\frac{-\alpha+\gamma+1-\delta(1-2\alpha)-(2-2\alpha)}{1-2\alpha}-(-1)=-\alpha+\gamma-\delta(1-2\alpha)>0.
\end{align}
Inserting (\ref{wt}) into the definition of $\phi(t)$ as (\ref{2.3.0.1}), we derive that
\begin{align}\label{lem-2.3.2.3}
  \phi^{\prime}(t) 
  \geqslant 4\int_0^{R^2}s^{1-\gamma}w_{s s}(s,t) + M^{1-2\alpha} k_fC_{39}  \cdot 
  \int_0^{R^2} \frac{ s^{\delta(1-2\alpha)+\alpha-\gamma} w^{1-2\alpha}(s,t) w_s(s,t) }{\big({1+\frac{1}{2} \int_0^{s} \frac{ w(\sigma, t)}{\sigma} \mathrm{d} \sigma}\big)^{1-2\alpha}}
\end{align}
for all $ t \in\left(0, T_{\max }\right)$. Due to $\gamma<1$ and $\gamma<\delta(1-2\alpha)+2-\alpha$, we obtain
\begin{align}\label{lem-2.3.2.12}
  s^{1-\gamma}w_s(s,t)\rightarrow 0, \quad  s^{-\gamma}w(s,t)\rightarrow 0 \  \text{ and } \ s^{\delta(1-2\alpha)+\alpha-\gamma}w^{2-2\alpha}(s,t)\rightarrow 0   \ \text{as}\
  s\rightarrow 0
\end{align}
for all $ t \in (0,T_{\max})$.

In the following, we first deal with the first term on the right side of (\ref{lem-2.3.2.3}). We integrate by parts and combine (\ref{lem-2.3.2.12}) with $w_s(s,t)\geqslant 0$ inferred from (\ref{lem-2.1.2.2}) to see that
\begin{align}\label{lem-2.3.2.7}
  4 {\int_0^{R^{2}}}s^{1-\gamma}  w_{s s}(s, t)\mathrm{d}s 
  = &-4(1-\gamma) \int_0^{R^{2}} s^{-\gamma}w_s(s,t) \mathrm{d}s 
      +4R^{2 (1-\gamma)} w_s(R^2,t) \notag\\
  \geqslant &-4(1-\gamma) \int_0^{R^{2}} s^{-\gamma}w_s(s,t) \mathrm{d}s \notag\\
  = &-4\gamma (1-\gamma) \int_0^{R^2} s^{-\gamma-1}w(s,t) \mathrm{d}s 
     -4(1-\gamma)\frac{R^{-2\gamma}m}{2 \pi}.
\end{align}
Due to the definition of $\psi(t)$, applying Young's inequality, we find that
\begin{align}\label{lem-2.3.2.8}
 & \int_0^{R^2}s^{-\gamma-1}w(s,t) \mathrm{d}s \notag \\
   \leqslant & \left(\int_0^{R^2} s^{\delta(1-2\alpha)+\alpha-\gamma-1} w^{2-2\alpha}(s,t) \mathrm{d}s \right)^{\frac{1}{2-2\alpha}} 
   \left(\int_0^{R^2} s^{\frac{-\alpha+\gamma+1-\delta(1-2\alpha)-(\gamma+1)(2-2\alpha)}{1-2\alpha}} \mathrm{d}s \right)^{\frac{1-2\alpha}{2-2\alpha}} \notag\\
   \leqslant &  C_{41}\psi^{\frac{1}{2-2\alpha}}(t),
\end{align}
where $ C_{41}=C_{41}(\alpha, R)=\left(\int_0^{R^2} s^{\frac{-\alpha+\gamma+1-\delta(1-2\alpha)-(\gamma+1)(2-2\alpha)}{1-2\alpha}} \mathrm{d}s \right)^{\frac{1-2\alpha}{2-2\alpha}}$ is finite because of (\ref{>-1-1}). Substituting (\ref{lem-2.3.2.8}) into (\ref{lem-2.3.2.7}), we obtain
\begin{align}\label{lem-2.3.2.9}
  4{\int_0^{R^{2}}}s^{1-\gamma}  w_{s s}(s, t)\mathrm{d}s 
  \geqslant -4\gamma (1-\gamma)C_{41} 
       \psi^{\frac{1}{2-2\alpha}}(t)-4(1-\gamma)\frac{R^{-2\gamma}m}{2 \pi}.
\end{align}

Next, we deal with the second term on the right side of (\ref{lem-2.3.2.3}). Using Hölder's inequality, we have
\begin{align}\label{lem-2.3.2.10}
  \int_0^s \frac{w(\sigma, t)}{\sigma} \mathrm{d} \sigma 
  \leqslant & \left(\int_0^s \sigma^{\frac{-\alpha+\gamma+1-\delta(1-2\alpha)-(2-2\alpha)}{1-2\alpha}}\mathrm{d}\sigma\right)^{\frac{1-2\alpha}{2-2\alpha}} \cdot 
         \left(\int_0^s \sigma^{\delta(1-2\alpha)+\alpha-\gamma-1} w^{2-2\alpha}(s,t) \mathrm{d}\sigma\right)^ {\frac{1}{2-2\alpha}} \notag\\
  \leqslant & {C_{42}}\psi^{\frac{1}{2-2\alpha}}(t), 
\end{align}
where $C_{42}=C_{42}(\alpha,R)= \left(\int_0^{R^2} \sigma^{\frac{-\alpha+\gamma+1-\delta(1-2\alpha)-(2-2\alpha)}{1-2\alpha}} \mathrm{d}\sigma\right)^{\frac{1-2\alpha}{2-2\alpha}}>0$ is finite because of (\ref{<-1-2}). Integrating by parts, and using (\ref{lem-2.3.2.12}), we obtain
\begin{align}\label{lem-2.4-2.23}
  \int_0^{R^2} s^{\delta(1-2\alpha)+\alpha-\gamma} w^{1-2\alpha}(s,t) w_s(s,t) \mathrm{d}s 
  = & -\frac{\delta(1-2\alpha)+\alpha-\gamma}{2-2\alpha} \int_0^{R^2} s^{\delta(1-2\alpha)+\alpha-\gamma-1} w^{2-2\alpha} \mathrm{d}s \notag\\
    & + \frac{R^{2\delta(1-2\alpha)+2\alpha-2\gamma}}{2-2\alpha} \left(\frac{m}{2\pi}\right)^{2-2\alpha} \notag\\
  \geqslant &C_{43}  \psi(t),
\end{align}
where $C_{43}=C_{43}(\alpha,R)=-\frac{\delta(1-2\alpha)+\alpha-\gamma}{2-2\alpha}>0$ due to $\gamma>\delta(1-2\alpha)+\alpha$ and $\alpha<0$. As a consequence of (\ref{lem-2.3.2.10}) and (\ref{lem-2.4-2.23}), we have, 
\begin{align}\label{lem-2.3.2.11}
  \int_0^{R^2} \frac{ s^{\delta(1-2\alpha)+\alpha-\gamma} w^{1-2\alpha}(s,t) w_s(s,t) }{\big({1+\frac{1}{2} \int_0^{s} \frac{ w(\sigma, t)}{\sigma} \mathrm{d} \sigma}\big)^{1-2\alpha}}
  \geqslant &\frac{2^{1-2\alpha} C_{43} \psi(t)}{2^{1-4\alpha}+{2^{-2\alpha}C_{42}^{1-2\alpha}}\psi^{\frac{1-2\alpha}{2-2\alpha}}(t)} \notag \\
  \geqslant &\frac{2 C_{43}}{\max\left\{2^{1-2\alpha},{C_{42}^{1-2\alpha}}\right\}} \cdot \frac{\psi(t)}{1+\psi^{\frac{1-2\alpha}{2-2\alpha}}(t)}.
\end{align}
Therefore, (\ref{lem-2.3.2.1}) can be deduced from (\ref{lem-2.3.2.3}), (\ref{lem-2.3.2.9}) and (\ref{lem-2.3.2.11}). 

Applying Young's inequality and (\ref{>-1-1}), there exists a positive constant $C_{40}=C_{40}(\alpha,R)$ large enough such that
\begin{align*}
  \phi(t)=&\int_0^{R^2} s^{-\gamma} w(s, t) \mathrm{d} s \notag \\
  \leqslant & \left(\int_0^{R^2} s^{\alpha-\gamma-1+\delta(1-2\alpha)}w^{2-2\alpha} \mathrm{d}s \right)^{\frac{1}{2-2\alpha}} \cdot 
  \left(\int_0^{R^2} s^{\frac{-\alpha+\gamma+1-\delta(1-2\alpha)-\gamma(2-2\alpha)}{1-2\alpha}} \right)^{\frac{1-2\alpha}{2-2\alpha}} \notag\\
  \leqslant & C_{40} \psi^{\frac{1}{2-2\alpha}}(t),
  \quad t \in (0,T_{\max}),
\end{align*}
which implies (\ref{lem-2.3.2.2}). 
\end{proof}

\emph{\textbf{Proof of Theorem \ref{0.1}.}}
We denote
\begin{align}\label{lem-2.4.1.2}
  \phi_0=\phi_0\left(u_0\right):=\int_0^{R^2} s^{-\gamma} w_0(s) \mathrm{d} s
\end{align}
and
\begin{align}\label{lem-2.4.1.3}
  S:=\left\{t \in\left(0, T_{\max }\right) \mid \phi(t)>\frac{\phi_0}{2} \text { on }(0, t)\right\},
\end{align}
where $w_0(s)$ is defined in (\ref{lem-2.1.2.3}). We note that $S$ is not empty due to the continuity of $\phi(t)$, (\ref{2.3.0.1}) and (\ref{lem-2.4.1.2}). Thus, $T:=\sup S \in\left(0, T_{\max }\right] \subset(0, \infty]$ is well-defined. 


We first prove $\phi^{\prime}(t) \geqslant 0$ for all $t \in(0,T)$.
Writing
\begin{align*}
  f_M(z):=\frac{M^{1-2\alpha}}{2 C_{40}} \cdot \frac{z}{1+z^{\frac{1-2\alpha}{2-2\alpha}}}-C_{40} z^{\frac{1}{2-2\alpha}}-C_{40}
\end{align*}
and
\begin{align*}
C_{44}:=\left(\frac{\phi_0}{2 C_{40}}\right)^{2-2\alpha},
\end{align*}
then we have
\begin{align*}
  \inf _{z \geqslant C_{44}} f_M(z) \rightarrow+ \infty \text { \ as } \ M \rightarrow \infty.
\end{align*}
Due to (\ref{lem-2.3.2.2}) and (\ref{lem-2.4.1.3}), we have
\begin{align*}
  \psi(t) 
  \geqslant \left(\frac{\phi(t)}{C_{40}}\right)^{2-2\alpha} 
  \geqslant \left(\frac{\phi_0}{2 C_{40}}\right)^{2-2\alpha}, 
  \quad t \in(0,T).
\end{align*}
Thus, there exists a constant $M^{\star}(u_0)>0$ such that
\begin{align}\label{f_M}
  f_M(\psi(t)) \geqslant 0, \quad  t \in(0, T), \ M \geqslant M^{\star}(u_0).
\end{align}
We choose $ M \geqslant M^{\star}(u_0)$ in the following proof. The non-decreasing of $g(z):=\frac{z^{\frac{1-2\alpha}{2-2\alpha}}}{1+z^{\frac{1-2\alpha}{2-2\alpha}}}$ on $z\in (0,+\infty)$, together with (\ref{lem-2.3.2.1}), (\ref{lem-2.3.2.2}) and (\ref{f_M}) warrants that
\begin{align}\label{lem-2.4.1.6}
  \phi^{\prime}(t) 
  \geqslant &\frac{M^{1-2\alpha}}{2 C_{40}} \cdot \frac{\psi(t)}{1+\psi^{\frac{1-2\alpha}{2-2\alpha}}(t)}+f_M(\psi(t)) \notag\\
  \geqslant &\frac{M^{1-2\alpha}}{2 C_{40}} \cdot \frac{\psi^{\frac{1-2\alpha}{2-2\alpha}}(t)}{1+\psi^{\frac{1-2\alpha}{2-2\alpha}}(t)} {\psi}^{\frac{1}{2-2\alpha}} \notag\\
  \geqslant &\frac{M^{1-2\alpha}}{2 C_{40}} \cdot \frac{\left(\frac{\phi_0}{2 C_{40}}\right)^{1-2\alpha}}{1+\left(\frac{\phi_0}{2 C_{40}}\right)^{1-2\alpha}} \cdot \frac{\phi(t)}{C_{40}} \notag\\
  = &C_{45} \phi(t), 
  \quad  t \in(0, T),
\end{align} 
where $C_{45}=C_{45}\left(\alpha, R, M\right)=\frac{M^{1-2\alpha}}{2 C_{40}^2} \cdot \frac{\left(\frac{\phi_0}{2 C_{40}}\right)^{1-2\alpha}}{1+\left(\frac{\phi_0}{2 C_{40}}\right)^{1-2\alpha}}$. By continuity of $\phi(t)$ and the definition of $T$, we have $\phi(T)=\frac{\phi_0}{2}$ which is evidently incompatible with the nondecrease of $\phi(t)$ on $[0,T)$ asserted by (\ref{lem-2.4.1.6}). Thus, we deduce that $T=T_{\max}$. 

It follows from (\ref{lem-2.4.1.6}) that
\begin{align*}
  \phi(t) \geqslant \phi_0 e^{C_{45}t}, \quad t \in(0,T).
\end{align*}
According to (\ref{2.3.0.1}) with $\gamma<1$ and the non-decreasing of $w$, we infer that
\begin{align*}
  \frac{m R^{2(1-\gamma)}}{2 \pi(1-\gamma)} 
  \geqslant \phi(t) 
  \geqslant \phi_0 e^{C_{45} t}, \quad t \in\left(0, T_{\max }\right),
\end{align*}
which implies 
\begin{align*}
  T_{\max } \leqslant \frac{1}{C_{45}} \ln \frac{m R^{2(1-\gamma)}}{2 \pi(1-\gamma) \phi_0}.
\end{align*}
We complete our proof.
\hfill$\Box$




\end{document}